\documentclass[12pt]{amsart}
	\oddsidemargin =10mm
	\evensidemargin =10mm
	\topmargin =5mm
	\textwidth =150mm
	\textheight =200mm

	\usepackage[utf8]{inputenc}
	\usepackage{a4wide}
	\usepackage{amsfonts}
	\usepackage{amsmath}
	\usepackage{amssymb}
	\usepackage{amsthm}
	\usepackage{mathrsfs}
	\usepackage{enumerate}
	\usepackage[all]{xy}
	\usepackage[colorlinks,citecolor=blue]{hyperref}
	\usepackage{color}
	\usepackage{graphicx}
	
	\newcommand{\PP}{\mathbb{P}}
	
	\renewcommand{\AA}{\mathbb{A}}
	\newcommand{\ZZ}{\mathbb{Z}}	
	\newcommand{\NN}{\mathbb{N}}	
	\newcommand{\QQ}{\mathbb{Q}}

	\newcommand{\Hh}{\mathscr{H}}

	\newcommand{\Mm}{\mathscr{M}}	
		
	\newcommand{\pp}{\wp}	
	\newcommand{\Ff}{\mathscr{F}}	
	\newcommand{\Oo}{\mathscr{O}}	
		
	\newcommand{\CC}{\mathbb{C}}	

	\renewcommand{\bar}{\overline}
	\newcommand{\et}{{\acute{e}t}}
	\newcommand{\Et}{{\acute{E}t}}

	\DeclareMathOperator{\chara}{char}

	\DeclareMathOperator{\Spec}{Spec}
	
	\DeclareMathOperator{\Proj}{Proj}

	\DeclareMathOperator{\ass}{ass}

	\newcommand{\isom}{\simeq}
	\newcommand{\xto}{\xrightarrow}

	\renewcommand{\phi}{\varphi}
	\renewcommand{\emptyset}{\varnothing}
	\renewcommand{\epsilon}{\varepsilon}
	\renewcommand{\tilde}{\widetilde}
	\renewcommand{\to}{\longrightarrow}
	
	\newcommand{\indown}{\mathrel{\rotatebox[origin=c]{-90}{$\in$}}}


	\newcommand{\stacks}[1]{\cite[\href{http://stacks.math.columbia.edu/tag/#1}{Tag #1}]{stacks-project}}

	\theoremstyle{plain}
		\newtheorem{lemma}[subsection]{Lemma}
		\newtheorem{corollary}[subsection]{Corollary}
		\newtheorem{proposition}[subsection]{Proposition}
		\newtheorem{theorem}[subsection]{Theorem}
		\newtheorem*{theorem*}{Theorem}
	\theoremstyle{definition}
		\newtheorem{definition}[subsection]{Definition}

	\numberwithin{equation}{subsection}

\begin{document}

\begin{abstract}
	A technical ingredient in Faltings' original approach to $p$-adic comparison theorems involves the construction of $K(\pi, 1)$-neighborhoods for a smooth scheme $X$ over a mixed characteristic dvr with a perfect residue field: every point $x\in X$ has an open neighborhood $U$ whose general fiber is a $K(\pi, 1)$ scheme (a notion analogous to having a contractible universal cover). We show how to extend this result to the logarithmically smooth case, which might help to simplify some proofs in $p$-adic Hodge theory. The main ingredient of the proof is a variant of a trick of Nagata used in his proof of the Noether Normalization Lemma.	
\end{abstract}

\date{\today}
\author{Piotr Achinger}
\title{$K(\pi, 1)$-neighborhoods and comparison theorems}

\maketitle

\setcounter{tocdepth}{1}
\tableofcontents

\section{Introduction}
\label{s:intro}

\subsection{} 
\label{intro:summary}
This paper contains several results about the \'etale topology of schemes over discrete valuation rings, mostly with applications to $p$-adic Hodge theory in mind. We prove the existence of $K(\pi, 1)$-neighborhoods for a log smooth scheme over a mixed characteristic discrete valuation ring (Theorem \ref{main}, see \ref{intro:main} below), and a comparison theorem between \'etale cohomology and the cohomology of Faltings' topos (Corollary \ref{comparisoncor}, see \ref{intro:comparison}). These could be used to simplify some arguments Faltings' second paper on $p$-adic Hodge theory \cite{FaltingsAlmost} using the approach of the first one \cite{Faltings}. A reader familiar with the notion of a $K(\pi, 1)$ in the \'etale topology and with Faltings' topos might want to skip ahead to \ref{intro:ab}.

\subsection{}
\label{intro:motivation}
When studying differentiable manifolds, one benefits from the fact that the underlying topological space is locally contractible. This is not the case in algebraic geometry: a smooth complex algebraic variety often does not admit a Zariski open cover by subvarieties which are contractible in the classical topology (a curve of nonzero genus, for example). 
On the other hand, as noticed by Artin in the course of the proof of the comparison theorem \cite[Exp. XI, 4.4]{Artin}, one can find a Zariski open cover by $K(\pi, 1)$ spaces (see \ref{artin} for the precise statement).  
Recall that a path connected topological space is called a $K(\pi, 1)$ space if its universal cover is weakly contractible. This is equivalent to the vanishing of all of its higher homotopy groups. 

\subsection{}
\label{intro:kpi1}
The notion of a $K(\pi, 1)$ space has a natural counterpart in algebraic geometry, defined in terms of \'etale local systems. Let $Y$ be a connected scheme with a geometric point $\bar y$. If $\Ff$ is a locally constant constructible abelian sheaf on $Y_\et$, the stalk $\Ff_{\bar y}$ is a representation of the fundamental group $\pi_1(Y, \bar y)$, and we have natural maps
\[
	\rho^i : H^i(\pi_1(Y, \bar y), \Ff_{\bar y}) \to H^i(Y_{\et}, \Ff).
\] 
We call $Y$ a \emph{$K(\pi, 1)$} if for every $n$ invertible on $Y$, and every $\Ff$ as above with $n\cdot \Ff = 0$, the maps $\rho^i$ are isomorphisms for all $i\geq 0$. See Section~\ref{s:kpi1} for a slightly more general definition and a discussion of this notion.

\subsection{}
\label{intro:faltings}
In a similar way as in Artin's comparison theorem, coverings by $K(\pi, 1)$ play a role in Faltings' approach to $p$-adic comparison theorems \cite{Faltings,FaltingsAlmost,Olsson}. Let $K$ be a finite extension of $\QQ_p$ and let $X_K$ be a smooth scheme over $K$. Loosely speaking, $p$-adic Hodge theory seeks to relate $H^i(X_{\bar K}, \QQ_p)$ to $H^i_{dR}(X_K/K)$ and other cohomology groups (see \cite{Fontaine}). In \cite{Faltings,FaltingsAlmost}, as a step towards this comparison, under the assumption that there is a smooth model $X/\Oo_K$, Faltings defines an intermediate cohomology theory $\Hh^\bullet(X)$ as the cohomology of a certain topos $\tilde E$ (following Abbes and Gros \cite{AbbesGros}, we call it the \emph{Faltings' topos}). This is the topos associated to a site $E$ whose objects are morphisms $V\to U$ over $X_{\bar K}\to X$ with $U\to X$ \'etale and $V\to U_{\bar K}$ finite \'etale (see \ref{def:faltingstopos} for the definition). The association $(V\to U)\mapsto V$ induces a morphism of topoi
\[ \Psi : X_{\bar K, \et} \to \tilde E.   \]  
To compare $H^i(X_{\bar K}, \QQ_p)$ and $\Hh^\bullet(X)$, the first step is to investigate the higher direct images $R^i \Psi_*$. In this direction, Faltings shows the following generalization of Artin's result (\cite[Lemma 2.1]{Faltings}, see \ref{faltings}): \emph{every point $x\in X$ has an open neighborhood $U$ for which $U_{\bar K}$ is a $K(\pi, 1)$}. It follows that \emph{$R^i \Psi_* \Ff = 0$ for $i>0$ and every locally constant constructible abelian sheaf $\Ff$ on $X_{\bar K}$}. It is these two results that we are going to generalize.

\subsection{}
\label{intro:ab}\label{setup}
Let $V$ be a discrete valuation ring with perfect residue field $k$ and fraction field $K$ of characteristic zero. Choose an algebraic closure $\bar K$ of $K$, and let  
\[ S = \Spec V, \quad s = \Spec k, \quad \eta = \Spec K, \quad \bar\eta = \Spec\bar K. \]

For a scheme $X$ over $S$ and an open subscheme $X^\circ \subseteq X$, we denote by $\tilde E$ the Faltings' topos of $X^\circ_{\bar\eta}\to X$ (see Definition \ref{def:faltingstopos}), and by $\Psi:X^\circ_{\bar\eta,\et}\to \tilde E$ the morphism of topoi \ref{def:faltingstopos}(c). Consider the following four statements:

{\bf (A)} \emph{$X$ has a basis of the \'etale topology consisting of $U$ for which $U\times_X X^\circ_{\bar\eta}$ is a $K(\pi, 1)$},

{\bf (B)} \emph{for every geometric point $\bar x\in X$, $X_{(\bar x)}\times_X X^\circ_{\bar\eta}$ is a $K(\pi, 1)$}, 

{\bf (C)} \emph{$R^i \Psi_* \Ff = 0$ ($i>0$) for every locally constant constructible abelian sheaf $\Ff$ on $X^\circ_{\bar\eta}$},

{\bf (D)} \emph{for every locally constant constructible abelian sheaf $\Ff$ on $X^\circ_{\bar\eta}$,  the natural maps}
\[ H^i(\tilde E, \Psi_* (\Ff)) \to H^i(X^\circ_{\bar\eta,\et}, \Ff). \]
\hspace{1cm} \emph{are isomorphisms for all $i\geq 0$.}

\noindent Then $(A)\Rightarrow (B)\Rightarrow (C)\Rightarrow (D)$, and the aforementioned theorem of Faltings (\ref{faltings}) states that (A) holds if $X$ is smooth over $S$ (and $X^\circ = X$). Faltings has also shown \cite[Lemma 2.3]{Faltings} that (B) is true if $X$ is smooth over $S$ and $X^\circ$ is the complement of a normal crossings divisor relative to $S$.

It is natural to ask whether these two results remain true if we do not require that $X$ be smooth over $S$ (we still want $X_{\eta}$, or at least $X^\circ_\eta$, to be smooth over $\eta$).  In general, the answer is no, even for $X$ regular (see Section~\ref{s:Lodh-counterex} for a counterexample). Note that the scheme $(X_{(\bar x)})_{\bar\eta}$ in (B) is the algebraic analogue of the Milnor fiber.

\subsection{}
\label{intro:main}
The most natural and useful generalization, hinted at in \cite[Remark on p. 242]{FaltingsAlmost}, and brought to our attention by Ahmed Abbes, seems to be the case of $X$ log smooth over $S$, where we endow $S$ with the ``standard'' log structure $\Mm_S\to \Oo_S$, i.e. the compactifying log structure induced by the open immersion $\eta \hookrightarrow S$. 
Our first main result confirms this expectation:

\begin{theorem*}[\ref{main}]
Assume that $\chara k=p>0$. Let $(X, \Mm_X)$ be a log smooth log scheme over $(S, \Mm_S)$ such that $X_\eta$ is smooth over $\eta$. Then (A) holds for $X^\circ=X$. 
\end{theorem*}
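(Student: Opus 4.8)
The plan is to run an induction on the relative dimension of $X$ over $S$, at each stage exhibiting $X_{\bar\eta}$ (étale locally on $X$) as an elementary fibration over the generic geometric fibre of a log smooth $S$-scheme of one dimension less, in the spirit of Artin's proof of \ref{artin}. The new difficulty, and the place where a Nagata-type trick enters, is that the fibration must be built in a neighbourhood of a point of the \emph{special} fibre, where the residue field $k$ may be too small to permit the generic projections used by Artin.

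First I would reduce to an explicit local model. By Kato's structure theorem for log smooth morphisms, étale locally on $X$ there is a chart $\NN\to P$ for $f$ (with $1\mapsto\delta\in P$, and $P$ fine and saturated) such that $X$ is étale over
\[ \Spec\bigl(V[P]/(x^{\delta}-\pi)\bigr), \]
where $\pi$ is a uniformiser and $x^\delta$ is the monomial attached to $\delta$. Since $\pi$ is a unit on $\eta$, the monomial $x^\delta$ becomes invertible on the generic fibre; localising the monoid at $\delta$ and splitting off its group of units one finds that $X_{\bar\eta}$ is étale over a disjoint union of copies of $\mathbb{G}_m^{\,c-1}\times\Spec\bar K[\bar P]$ for a sharp monoid $\bar P$. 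The hypothesis that $X_\eta$ be smooth forces the associated toric variety to be smooth, hence $\bar P$ to be free, so the target is $\mathbb{G}_m^{\,c-1}\times\mathbb{A}^{b}$. This is visibly a $K(\pi,1)$; but — exactly as in Artin's situation — being étale over a $K(\pi,1)$ is not sufficient, and the étale covering must be resolved by a genuine fibration argument.

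The inductive step is to produce, in an étale neighbourhood $U$ of a given point $x\in X$ (possibly in the special fibre), a morphism of $S$-schemes $U\to U'$ with $U'$ again log smooth over $S$ of smaller relative dimension, whose generic geometric fibres are smooth affine curves. Affine curves over $\bar K$ are $K(\pi,1)$ by Artin, and an elementary fibration over a $K(\pi,1)$ base is again a $K(\pi,1)$, by the Leray spectral sequence together with the exact sequence of fundamental groups of the fibration. Thus if $U'_{\bar\eta}$ is a $K(\pi,1)$ by the inductive hypothesis, then so is $U_{\bar\eta}$; the base of the induction is relative dimension zero, where $X_{\bar\eta}$ is finite étale over $\bar\eta$ and the statement is trivial. (The torus factors $\mathbb{G}_m$ and the affine factors $\mathbb{A}^1$ appearing in the local model are themselves peeled off one at a time as curve fibrations.)

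The main obstacle is the construction of the projection $U\to U'$, and this is precisely where the variant of Nagata's trick is decisive. Artin obtains such projections by choosing a hyperplane in general position, which requires a large supply of rational points and therefore fails over a finite (or merely perfect) residue field $k$. Instead I would use an explicit substitution of Nagata type, adapted so as to be compatible with the monoid chart and hence with the log structure, to make the relevant coordinate ``dominant'', forcing a projection to be finite on the closure of its fibres and thereby turning it into an elementary fibration over an arbitrary field. The steps demanding the most care are (i) carrying out the substitution inside the toric/monoid framework so that both $U$ and $U'$ remain log smooth over $S$ and the fibration stays elementary after restriction to the special fibre, where $x$ lives; and (ii) verifying that the resulting relative curve admits a good compactification with finite étale sections removed, so that the fibration lemma genuinely applies to its generic geometric fibre.
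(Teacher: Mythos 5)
Your reduction to the local model (Kato's theorem, localizing the monoid at $\rho$ and splitting off units to see that $X_{\bar\eta}$ is \'etale over copies of $\mathbb{G}_m^{a}\times\AA^b$) agrees with the paper, and your remark that being \'etale over a $K(\pi,1)$ is not enough is correct. But the architecture you then adopt — induction on relative dimension via elementary fibrations over log smooth bases — is not the paper's, and its crucial step is left unresolved in a way that I do not think can be repaired as stated. The whole difficulty is concentrated in your inductive step: producing, \'etale-locally near a point of the \emph{special} fibre, a projection $U\to U'$ with $U'$ log smooth over $S$ of one lower relative dimension which is an elementary fibration on generic fibres. Your proposed tool, a ``Nagata-type substitution compatible with the monoid chart so that both $U$ and $U'$ remain log smooth,'' is in tension with itself: Nagata substitutions are \emph{additive} perturbations $x_i\mapsto x_i+x_n^{m_i}$ and destroy any monomial/monoid structure, whereas substitutions that do respect the chart are monomial and buy no new finiteness. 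The paper resolves this by abandoning log compatibility altogether: from the monoid splitting it builds a map $f'\colon X\to\AA^d_S$ to \emph{plain} affine space which is merely $\eta$-\'etale, corrects it to a quasi-finite $\eta$-\'etale map by the Nagata trick (Propositions \ref{nagatatrick} and \ref{preet-normal}), makes it finite by \'etale localization at the source and target, and then invokes Faltings' smooth-case theorem (\ref{faltings}) for the target, pulling the $K(\pi,1)$ property back along the resulting finite \'etale map of generic fibres (Proposition \ref{kpi1prop}(b)). There is no induction, no fibration theory, and no log structure on the target to preserve.

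Relatedly, you misidentify why $\chara k=p>0$ is needed, and consequently your sketch never uses that hypothesis in an essential way. The issue is not (primarily) that $k$ is too small for generic projections; it is that \emph{any} perturbation used to force (quasi-)finiteness must not destroy \'etaleness, or smoothness of relative dimension one, of the generic fibre in a neighbourhood of the special point. The paper's key observation is that this is achieved by taking the Nagata exponents divisible by a high power of $p$: the differentials of the added terms then lie in $p^{n+1}\Omega^1$, so $\Omega^1_f\equiv\Omega^1_{f'}$ modulo $p^{n+1}$ and $\eta$-\'etaleness survives (Lemmas \ref{preetmod} and \ref{condpreet}). This device simply does not exist in equal characteristic zero, and \ref{counterex} shows the obstruction is real: for the semistable curve $xy=\pi$, any $S$-map to $\AA^1_S$ that is \'etale on the generic fibre has $df$ vanishing identically on a component of the special fibre, hence in characteristic zero cannot be quasi-finite. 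A characteristic-blind sketch therefore either claims a result the paper explicitly leaves open (and exhibits obstructions to), or — as here — hides the entire difficulty inside the one step it does not carry out.
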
 

Note that in the applications, in the above situation one usually cares about the case $X^\circ = (X, \Mm_X)_{tr}$ (the biggest open on which the log structure is trivial). While the theorem deals with $X^\circ=X$, we are able to deduce corollaries about the other case as well (see the next section).

The strategy is to reduce to the smooth case (idea due to R. Lodh) by finding an \'etale neighborhood $U'$ of $\bar x$ in $X$ and a map
\[ f:U' \to W'  \]
to a smooth $S$-scheme $W'$ such that $f_\eta:U'_\eta\to W'_\eta$ is finite \'etale. In such a situation, by Faltings's result (\ref{faltings}), there is an open neighborhood $W$ of $f(x)$ ($x$ being the underlying point of $\bar x$) in $W'$ such that $W_\eta$ is a $K(\pi, 1)$. Then $U = f^{-1}(W)$ is an \'etale neighborhood of $\bar x$, and since $U_\eta\to W_\eta$ is finite \'etale, $U_\eta$ is a $K(\pi, 1)$ as well.

The proof of the existence of $f$ makes use of the technique of Nagata's proof of the Noether Normalization Lemma, combined with the observation that the exponents used in that proof can be taken to be divisible by high powers of $p$ (see \ref{normalization}). Therefore our proof applies only in mixed characteristic. While we expect the result to be true regardless of the characteristic, we point out an additional difficulty in equal characteristic zero in \ref{counterex}.

\subsection{}
\label{intro:comparison}
We also treat the equicharacteristic zero case and the case with boundary. More precisely, we use Theorem~\ref{main} and log absolute cohomological purity to prove the following:

\begin{theorem*}[\ref{comparison}+\ref{comparisoncor}]
Let $(X, \Mm_X)$ be a log smooth log scheme over $(S, \Mm_S)$ such that $X_\eta$ is smooth over $\eta$, and let $X^\circ = (X, \Mm_X)_{tr}$ be the biggest open subset on which $\Mm_X$ is trivial. If $\chara k = 0$, assume moreover that $(X, \Mm_X)$ is saturated. Then (B)--(D) above hold for $X$ and $X^\circ$.
\end{theorem*}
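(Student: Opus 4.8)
The plan is to establish (B); the implications $(B)\Rightarrow(C)\Rightarrow(D)$ recorded in \ref{intro:ab} then give the remaining two statements. So fix a geometric point $\bar x$ of $X$ and put $M' = (X_{(\bar x)})_{\bar\eta}$, the full Milnor fiber, and $M = X_{(\bar x)}\times_X X^\circ_{\bar\eta}$, whose $K(\pi,1)$-property is the point at issue. Since $\Mm_S$ is trivial over $\bar\eta$, the scheme $X_{\bar\eta}$ is smooth and $\Mm_X$ restricts over $\bar\eta$ to the log structure of a normal crossings divisor $D_{\bar\eta}\subset X_{\bar\eta}$, so that $X^\circ_{\bar\eta}=X_{\bar\eta}\setminus D_{\bar\eta}$. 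Consequently $M = M'\setminus D'$ with $D' = D_{\bar\eta}\times_{X_{\bar\eta}}M'$ a normal crossings divisor in $M'$, and the task becomes to show that the complement of a normal crossings divisor in the Milnor fiber $M'$ is a $K(\pi,1)$.

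The main tool is log absolute cohomological purity for the open immersion $j\colon M\hookrightarrow M'$. For a locally constant constructible $\Ff$ on $M$ killed by an integer $n$ invertible on $X$, purity identifies $R^qj_*\Ff$ with the pushforward of a twist $\Ff(-q)$ from the codimension-$q$ strata $D'_I=\bigcap_{i\in I}D'_i$ ($|I|=q$). The resulting Leray spectral sequence
\[ E_2^{p,q}=H^p\bigl(M',\,R^qj_*\Ff\bigr)\;\Longrightarrow\;H^{p+q}(M,\Ff) \]
expresses $H^*(M,\Ff)$ in terms of the \'etale cohomology of $M'$ and of the closed strata $D'_I$. On the group-theoretic side, the kernel of $\pi_1(M)\twoheadrightarrow\pi_1(M')$ is generated by the tame inertia around the branches of $D'$, and $H^*\bigl(\pi_1(M),\Ff_{\bar y}\bigr)$ is governed by a parallel Gysin-type spectral sequence. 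I would check that the comparison maps $\rho^i$ intertwine these two spectral sequences; then, granting that $M'$ and every stratum $D'_I$ is a $K(\pi,1)$, a comparison of the two spectral sequences (by induction on the number of branches of $D'$) forces $M$ to be a $K(\pi,1)$.

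It remains to know that $M'$ and each stratum $D'_I$ is a $K(\pi,1)$. Each $D_I$, equipped with its induced log structure, is again log smooth over $(S,\Mm_S)$ and has smooth generic fiber, and $D'_I$ is precisely its full Milnor fiber $\bigl((D_I)_{(\bar x)}\bigr)_{\bar\eta}$; thus the required $K(\pi,1)$-properties are all instances of statement (B) in the regime $X^\circ=X$. When $\chara k=p>0$ they are delivered at once by Theorem~\ref{main} (through $(A)\Rightarrow(B)$), applied to $X$ and to every $D_I$. When $\chara k=0$ Theorem~\ref{main} is not available---its proof relies on Nagata's exponent trick, which requires $\chara k=p$---and this is exactly where saturatedness enters: for an fs saturated log smooth scheme in residue characteristic $0$ one can pass to Kummer covers and toroidal modifications whose total spaces are smooth over $S$, reduce the $K(\pi,1)$-property of each open stratum to the smooth normal crossings situation (the case treated by \cite[Lemma 2.3]{Faltings}, whose Artin-style proof is insensitive to the characteristic), and then descend along the finite \'etale Kummer covers of the trivial loci.

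The main obstacle is twofold. First one must make the matching of the geometric purity spectral sequence with the group-theoretic Gysin sequence entirely precise, verifying that the isomorphisms $\rho^i$ for $M'$ and for the strata fit together compatibly into the isomorphism for $M$; this is the technical heart, and it is exactly here that the normal crossings (equivalently, saturated) shape of the boundary is indispensable. Second, and more seriously, the equicharacteristic-zero case cannot borrow Theorem~\ref{main} and must extract the $K(\pi,1)$-property of the strata from saturatedness alone; guaranteeing that the requisite Kummer covers have log smooth total spaces and induce finite \'etale maps on the trivial loci is the delicate step, and is the reason saturation is imposed in that case.
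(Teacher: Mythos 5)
Your reduction to statement (B), the identification of $M = X_{(\bar x)}\times_X X^\circ_{\bar\eta}$ as the complement of a strict normal crossings divisor $D'$ in the Milnor fiber $M'=(X_{(\bar x)})_{\bar\eta}$, and the use of Theorem~\ref{main} to know that $M'$ is a $K(\pi,1)$ in mixed characteristic all agree with the paper. But your central step is a genuine gap: the inference ``$M'$ is a $K(\pi,1)$, $D'$ is normal crossings, every stratum $D'_I$ is a $K(\pi,1)$, hence $M=M'\setminus D'$ is a $K(\pi,1)$'' is simply false, so no amount of care in intertwining the two spectral sequences can make it work. A counterexample: let $M'=\AA^2_{\CC}$ and let $D'$ be three lines in general position. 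This is a strict normal crossings divisor, and $M'$ and all strata (lines and points) are $K(\pi,1)$'s; yet the complement $M$ is homotopy equivalent to the $2$-skeleton of $(S^1)^3$ (Hattori), so $\pi_1(M)\isom\ZZ^3$ (resp.\ $\pi_1^{\et}\isom\hat{\ZZ}^3$), whence $H^3(\pi_1(M),\ZZ/\ell)\neq 0$, while $H^3(M,\ZZ/\ell)=0$ because $M$ is an affine surface; thus $\rho^3$ is not an isomorphism and $M$ is not a $K(\pi,1)$. Relatedly, the ``parallel Gysin-type spectral sequence'' you posit for the group cohomology of $\pi_1(M)$ does not exist at this level of generality --- the strata $D'_I$ do not even map to $M$, and producing such a sequence is essentially equivalent to the statement being proved.

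What the paper uses, and what your argument is missing, is extra structure that Milnor fibers of log smooth schemes have and the line-arrangement example does not: a supply of \emph{ramified} covers that are again $K(\pi,1)$'s. Concretely, after reducing (via Abhyankar's lemma) to $\Ff=j^*\Ff'$ with $\Ff'$ locally constant on $Z=M'$, the paper produces, for any $n$, a finite cover $f:Z'\to Z$ ramified to order divisible by $n$ along every branch of $D'$, obtained by pulling back a chart along $\AA_{\cdot n}:\AA_P\to\AA_P$ and replacing $S$ by $S'=\Spec V[\pi']/(\pi'^n-\pi)$; such $Z'$ is again the Milnor fiber of a log smooth scheme, hence again a $K(\pi,1)$ by Theorem~\ref{main} and Corollary~\ref{AtoB}. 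Log purity then shows the base-change map $f^*R^bj_*\Ff\to R^bj'_*(f^{\circ*}\Ff)$ is divisible by $n^b$, and the induction runs on the Leray filtration level of the class $\zeta$ (not on the number of branches): pulling back along $f$ annihilates the graded pieces in positive $b$, and the bottom piece is killed by a finite \'etale cover because $Z'$ is a $K(\pi,1)$. (In the counterexample above, the branched covers of $\AA^2$ along the three lines are not $K(\pi,1)$'s, which is exactly why that configuration escapes this argument.) Finally, your equicharacteristic-zero sketch (toroidal modifications plus the unsupported assertion that Faltings' Lemma~2.3 is characteristic-independent) diverges from the paper and cannot be accepted as is: the paper instead proves Theorem~\ref{vancycl} directly, using that saturatedness gives log regularity (Kato), so purity applies, and a Kummer pull-back of the chart kills $\bar\Mm^{gp}_{X,\bar x}\otimes\ZZ/n\ZZ$ and hence all purity classes, verifying condition (b) of Proposition~\ref{someprop}.
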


\subsection{Outline}
Sections~\ref{s:functoriality}--\ref{s:loggeom} are preliminary. Section~\ref{s:functoriality} contains some abstract nonsense on cohomology groups of topoi, which is then used in Section~\ref{s:kpi1} where we review the notion of a $K(\pi, 1)$ scheme. Section~\ref{s:loggeom} provides a review of the relevant logarithmic geometry. 

Sections~\ref{s:noether}--\ref{s:Lodh-counterex} constitute the heart of the paper. Section~\ref{s:noether} deals with a variant of Noether normalization and proves the key Proposition \ref{preet-normal}. The proof of our main Theorem~\ref{main} is the subsequent Section~\ref{s:main}. Section~\ref{s:Lodh-counterex} gives an example of a regular scheme for which the assertion of Theorem~\ref{main} does not hold.

Section~\ref{s:nakayama} deals with the equicharacteristic zero case. The final Section~\ref{s:comparison} reviews the definition of Faltings' topos and proves our second main result, the comparison Theorem~\ref{comparison}.

\subsection{Acknowledgements}
I am indebted to A. Abbes for suggesting the problem, and to him, my advisor A. Ogus, and to M. Olsson for numerous discussions and suggestions, and for careful reading of several versions of the manuscript. Part of this work has been done during my visit to the IH\'ES in the summer of 2014. I would like to thank the institution for the hospitality, and again A. Abbes for the invitation. This work was supported by a Fulbright International Science and Technology Award.

\section{Functoriality properties of the cohomology pull-back maps}
\label{s:functoriality}

This section checks a certain functoriality property of cohomology of topoi, needed in Section~\ref{s:kpi1}. The reader should feel no discomfort in skipping this part. The result we need is that given a commutative diagram of topoi 
\[
		\xymatrix{
			X' \ar[r]^{g'} \ar[d]_{f'}  & X \ar[d]^f \\
			Y'\ar[r]_g                       & Y
		}
\]
and a sheaf $\Ff$ on $X$, there exist certain natural commutative diagrams \eqref{commdiag}
\[			
	\xymatrix{
			H^i(Y, f_* \Ff) \ar[d] \ar[r] & H^i(X, \Ff) \ar[d] \\
			H^i(Y', g^* f_*\Ff)\ar[r]  & H^i(X', g'^* \Ff)
	}
\]
for all $i\geq 0$. 

\subsection{}\label{funct:setup} Let 
\[
		\xymatrix{
			X' \ar[r]^{g'} \ar[d]_{f'}  & X \ar[d]^f \\
			Y'\ar[r]_g                       & Y
		}
\] 
be a commutative diagram of morphisms of topoi, that is, there is a chosen isomorphism 
\begin{equation} \label{iota1}
	\iota : f_* g'_* \isom g_* f'_*.   
\end{equation}
By adjunction, this also induces an isomorphism (also denoted $\iota$)
\begin{equation} \label{iota2}
 	\iota : f'^* g^* \isom g'^* f^* .   
\end{equation}

\subsection{}\label{bc}
Applying $f_*$ to the unit $\eta: id\to g'_* g'^*$ and composing with \eqref{iota1} yields a map
\[ f_* \to f_* g'_* g'^* \isom g_* f'_* g'_*, \]
which (using the adjunction between $g^*$ and $g_*$) gives us a map
\begin{equation}\label{bcdef}
	b.c. : g^* f_* \to f'_* g'^*  
\end{equation}
called the base change map.

Similarly, applying $g'^*$ to the counit $\varepsilon : f^* f_*\to id$, and composing with \eqref{iota2} yields a map
\[ f'^* g^* f_* \isom g'^* f^* f_* \to g'^*, \]
which (using the adjunction between $f'^*$ and $f'_*$) gives us another map
\[ g^* f_* \to f'_* g'^*  \]
that is equal to \eqref{bcdef} by \cite[Exp. XVII, Proposition 2.1.3]{Artin}.

\subsection{}\label{pullback}
Recall that if $f:X\to Y$ is a morphism of topoi, there is a natural map of $\delta$-functors from the category of abelian sheaves on $Y$ to the category of abelian groups:
\[  f^* : H^i(Y, -)\to H^i(X, f^*(-)).  \]   
Indeed, the right hand side is a $\delta$-functor because $f^*$ is exact, the transformation is defined for $i=0$, and $H^i(Y, -)$ is a universal $\delta$-functor.

The formation of this map is compatible with composition, that is, if $g:Z\to X$ is another map, the diagram (of $\delta$-functors of the above type)
\[ 
		\xymatrix{
			H^i(Y, -) \ar[r]^{f^*} \ar[d]_{(fg)^*}  & H^i(X, f^*(-)) \ar[d]^{g^*} \\
			H^i(Z, (fg)^*(-)) \ar@{=}[r]                       & H^i(Z, g^* f^* (-))
		}
\]
commutes. 

\subsection{} \label{comm1}
Applying \ref{pullback} to the situation of \ref{funct:setup} and composing with the map induced by the counit $\varepsilon: f^* f_* \to id$, we get a system of natural transformations
\begin{equation}\label{defmu} \mu : H^i(Y, f_*(-)) \xto{f^*} H^i(X, f^* f_* (-)) \xto{\varepsilon} H^i(X, -).  
\end{equation}
These coincide with the edge homomorphisms in the Leray spectral sequence for $f$ by \cite[{$0_{III}$ 12.1.7}]{EGA_III_1}. 

Let $\mu'$ be the following composition:
\[ \mu' : H^i(Y', g^* f_* (-)) \xto{b.c.} H^i(Y', f'_* g'^*(-))\xto{f'^*} H^i(X', f'^* f'_* g'^*(-)) \xto{\varepsilon'} H^i(X', g'^*(-)), \]
where $\varepsilon': f'^* f'_* \to id$ is the counit.

The goal of this section is to show that the diagram
	\begin{equation}\label{commdiag}
			\xymatrix{
			H^i(Y, f_* (-)) \ar[d]_{g^*} \ar[r]^{\mu} & H^i(X, (-)) \ar[d]^{g'^*} \\
			H^i(Y', g^* f_*(-))\ar[r]_{\mu'}  & H^i(X', g'^* (-))
			}
	\end{equation}	
commutes.

\subsection{} The assertion of \ref{comm1} will follow from the commutativity of the following diagram
\[ 
	\xymatrix{
		H^i(Y, f_* (-)) \ar[dd]_{g^* \; \hskip 1.7in \mathtt{(I)} \hskip -1.8in} \ar[rr]^{f^*} & & H^i(X, f^* f_* (-)) \ar[d]_{g'^*  \; \hskip .8in \mathtt{(III)} \hskip -.9in} \ar[r]^\varepsilon & H^i(X, -) \ar[d]^{g'^*} \\
		& & H^i(X', g'^* f^* f_* (-)) \ar@{=}[dl]_\iota \ar[r]_{g'^*(\varepsilon)} & H^i(X', g'^* (-)) \\
		H^i(Y', g^* f_* (-)) \ar[d]_{b.c. \; \hskip .8in \mathtt{(II)} \hskip -.9in} \ar[r]^{f'^*} & H^i(X', f'^* g^* f_*(-)) \ar[d]^{f'^*(b.c.)} & {}^\mathtt{(IV)} & \\
		H^i(Y', f'^* g'^* (-)) \ar[r]_{f'^*} & H^i(X', f'^* f'_* g'^*(-)) \ar@/_2pc/[uurr]_{\varepsilon'}. & &
	}
\]
Square (I) commutes by the functoriality of $f^*$ (\ref{pullback}). Squares (II) and (III) commute simply because $f^*$ is a natural transformation.

\subsection{} It remains to prove that (IV) commutes. This in turn will follow from the commutativity of
\[
	\xymatrix{
		f'^* g^* f_* \ar@{=}[r]^\iota \ar[d]_{f'^*(b.c.)} & g'^* f^* f_* \ar[d]^{g'^* (\varepsilon)} \\
		f'^* f'_* g'^* \ar[r]_{\varepsilon'} & g'^* .
	}
\]
By the discussion of \ref{bc}, the composition $g'^*(\varepsilon)\circ \iota$ above is adjoint (under the adjunction between $f'^*$ and $f'_*$) to the base change map $b.c.: g^* f_* \to f'_* g'^* $. It suffices to show that $\varepsilon\circ f'^*(b.c.)$ is also adjoint to the base change map. This follows precisely from the triangle identities for the adjunction between $f'^*$ and $f'_*$.

\section{\texorpdfstring{$K(\pi, 1)$ schemes}{K(pi, 1) schemes}}
\label{s:kpi1}

This section recalls the definition of a $K(\pi, 1)$ space in algebraic geometry, establishes some basic properties that apparently do not appear in the literature, and states the theorems of Artin and Faltings which assert the existence of coverings of smooth schemes by $K(\pi, 1)$'s.

\subsection{}\label{conncomp}
We will often consider schemes which are coherent and have a finite number of connected components (see \cite[9.6]{AbbesGros} for some criteria).

\subsection{}
Let $Y$ be a scheme satisfying \eqref{conncomp}. We denote by $\mathrm{F\et}(Y)$ the full subcategory of the \'etale site $\mathrm{\acute{E}t}(Y)$ consisting of \emph{finite} \'etale maps $Y'\to Y$, endowed with the induced topology, and by $Y_{f\et}$ the corresponding topos (cf. \cite[9.2]{Higgs2}). Note that the maps in $\mathrm{F\et}(Y)$ are also finite \'etale. The inclusion functor induces a morphism of topoi (cf. \cite[9.2.1]{Higgs2})
\[ \rho : Y_\et \to Y_{f\et}. \]
The pullback $\rho^*$ identifies $Y_{f\et}$ with the category of sheaves on $Y_\et$ equal to the union of their locally constant subsheaves (cf. \cite[5.1]{Olsson}, \cite[9.17]{Higgs2}). 
If $Y$ is connected and $\bar y\to Y$ is a geometric point, we have an equivalence of topoi
$Y_{f\et} \isom B\pi_1(Y, \bar y)$.

\begin{definition}[{cf. \cite[Definition 5.3]{Olsson}}]\label{kpi1def}
Let $\pp$ be a set of prime numbers. A scheme $Y$ satisfying \ref{conncomp} is called a \emph{$K(\pi, 1)$ for $\pp$-adic coefficients} if for every integer $n$ with $\ass n \subseteq \pp$, and every sheaf of $\ZZ/(n)$-modules $F$ on $Y_{f\et}$, the natural map
\[ F \to R\rho_* \rho^* F \]
is an isomorphism. If $\pp$ is the set of primes invertible on $Y$, we simply call $Y$ a $K(\pi, 1)$.
\end{definition}

The above condition is equivalent to saying that if $\Ff$ is a locally constant constructible sheaf of $\ZZ/(n)$-modules on $Y_\et$, then $R^i \rho_* \Ff = 0$ for $i>0$ (cf. \cite[9.17]{Higgs2}).

\begin{proposition}\label{kpi1prop} Let $\pp$ be a set of prime numbers, and let $Y$ be a scheme satisfying \ref{conncomp}. 
\begin{enumerate}[(a)]
	\item $Y$ is a $K(\pi, 1)$ for $\pp$-adic coefficents if and only if for every integer $n$ with $\ass n\subseteq \pp$, every locally constant constructible sheaf of $\ZZ/(n)$-modules $\Ff$ on $Y_\et$, and every class $\zeta\in H^i(Y, \Ff)$ with $i>0$, there exists a finite \'etale surjective map $f:Y'\to Y$ such that $f^*(\zeta) = 0 \in H^i(Y', f^* \Ff)$.
	\item Let $f:X\to Y$ be a finite \'etale surjective map. Then $X$ satisfies \ref{conncomp}, and $Y$ is a $K(\pi, 1)$ for $\pp$-adic coefficients if and only if $X$ is.
	\item Suppose that $Y$ is of finite type over a field $F$ and that $F'$ is a field extension of $F$. Denote $X = Y_{F'}$. Then $X$ satisfies \ref{conncomp}, and $Y$ is a $K(\pi, 1)$ if and only if $X$ is.
\end{enumerate}
\end{proposition}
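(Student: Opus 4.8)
The plan is to prove the three parts of Proposition~\ref{kpi1prop} by exploiting the characterization of the $K(\pi,1)$ condition in terms of the vanishing of $R^i\rho_*\Ff$, reformulating this vanishing as a ``killing cohomology by finite \'etale covers'' statement, and then checking that this reformulation is stable under the operations in (b) and (c). First I would establish (a), since it provides the concrete criterion used throughout. The content is that $R^i\rho_*\Ff=0$ for $i>0$ is equivalent to the cohomology-killing condition. The key observation is that $Y_{f\et}$ is the topos of sheaves that are unions of their locally constant subsheaves, so the stalks of $R^i\rho_*\Ff$ (computed at points of $Y_{f\et}$, i.e.\ along the filtered system of finite \'etale covers) are the colimits $\varinjlim_{Y'\to Y} H^i(Y',f^*\Ff)$ over connected finite \'etale $Y'\to Y$. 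A class $\zeta\in H^i(Y,\Ff)$ dies in this colimit precisely when it pulls back to zero on some finite \'etale surjective cover, which is exactly the stated condition; here I would use that any finite \'etale $Y'\to Y$ can be refined to a surjective one and that surjectivity lets me reduce to connected covers. I expect part (a) to be essentially a bookkeeping exercise once the description of $R^i\rho_*$ via the colimit over the fundamental pro-cover is in hand.

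For part (b), given $f:X\to Y$ finite \'etale surjective, I would first note that $X$ inherits condition~\ref{conncomp} (coherence is preserved by finite morphisms, and $X$ has finitely many components since it is finite over $Y$). To transfer the $K(\pi,1)$ property I would apply the criterion of (a). The direction ``$Y$ is $K(\pi,1)\Rightarrow X$ is'' uses that a finite \'etale surjective cover of a finite \'etale surjective cover of $Y$ is again one of $Y$: given $\zeta\in H^i(X,\Ff)$, push $\Ff$ forward to a locally constant constructible sheaf on $Y$ (using finite \'etaleness to keep local constancy and constructibility, and $n$-torsion), kill the image class there, and pull back, composing covers. The converse direction ``$X$ is $K(\pi,1)\Rightarrow Y$ is'' is the more delicate one: given $\zeta\in H^i(Y,\Ff)$, I would pull back along $f$ to $X$, find a finite \'etale surjective $X'\to X$ killing $f^*\zeta$, and then argue that the composite $X'\to Y$ is finite \'etale surjective and kills $\zeta$. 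For this I need that $f^*\zeta=0$ on a cover of $X$ forces $\zeta$ to vanish on a cover of $Y$, which follows because $f$ being finite \'etale surjective means $f^*$ is injective on cohomology after a further cover, or more cleanly because the composite cover is itself a finite \'etale surjective cover of $Y$ over which $\zeta$ restricts to (the pullback of) $f^*\zeta$.

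For part (c), with $X=Y_{F'}$ for a field extension $F'/F$ and $Y$ of finite type over $F$, condition~\ref{conncomp} for $X$ follows from finite-type base change. The substance is the invariance of the $K(\pi,1)$ property under field extension, which I would again test via criterion~(a). Here I anticipate the main obstacle: the finite \'etale covers of $X$ need not all descend to covers of $Y$, and cohomology classes on $X$ must be related to classes on $Y$. The standard approach is a limit argument reducing to the case where $F'/F$ is finitely generated, hence a composite of a purely transcendental extension and a finite extension, and then to separable versus inseparable and algebraic versus transcendental cases. For a filtered colimit of fields one uses that \'etale cohomology with finite coefficients commutes with such limits (smooth/proper base change or the general limit formalism for constructible sheaves), so both $H^i(X,-)$ and the covers of $X$ are approximated by those over finite subextensions; for the purely transcendental part one invokes that base change to a field extension does not change fundamental groups or cohomology of smooth varieties with coefficients prime to the characteristic, together with invariance under passage to an algebraically closed (or separably closed) base. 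I would structure the proof by first handling $F'$ separably closed over a separable closure, then algebraic extensions via a colimit and part~(b)-type descent, and finally transcendental extensions via the limit formalism, and I expect the careful invocation of the appropriate base-change theorem to be the technical crux of the whole proposition.
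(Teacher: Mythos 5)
The decisive gap is in your part (a), which you dismiss as ``essentially a bookkeeping exercise'' but which carries the real content of the proposition. Your identification of the stalk of $R^i\rho_*\Ff$ (for $Y$ connected) with the colimit $\varinjlim_{Y'\to Y} H^i(Y',\Ff|_{Y'})$ over connected finite \'etale covers is correct, but vanishing of that colimit requires that \emph{every} element die, and its elements are represented by classes on arbitrary connected covers $Y'$, not only by classes pulled back from $Y$. The condition in (a) speaks only of classes in $H^i(Y,\Ff)$, so your argument shows only that the image of $H^i(Y,\Ff)$ in the stalk vanishes --- strictly weaker than what is needed. The missing step, which is exactly where the paper does real work, is descending a class $\zeta'\in H^i(Y',\Ff_0)$ on a cover to $Y$: since $f\colon Y'\to Y$ is finite \'etale, $f_*\Ff_0$ is again locally constant constructible and $R^qf_*\Ff_0=0$ for $q>0$, so the edge map $\mu\colon H^i(Y,f_*\Ff_0)\to H^i(Y',\Ff_0)$ of \eqref{defmu} is an isomorphism; one applies the hypothesis of (a) to the corresponding class (with the \emph{different} coefficient sheaf $f_*\Ff_0$), obtains a finite \'etale surjective $g\colon Y''\to Y$, and must then check that $\zeta'$ dies on $Y'\times_Y Y''$. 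That check is precisely the compatibility of $\mu$ with base change, i.e. the commutativity of the square \eqref{commdiag}, which the paper devotes all of Section~\ref{s:functoriality} to establishing. Without this descent step the equivalence in (a) is simply not proved.

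Ironically, you do invoke this exact pushforward--kill--pullback mechanism in part (b) (for the direction ``$Y$ a $K(\pi,1)$ $\Rightarrow$ $X$ a $K(\pi,1)$''), so the idea is present in your proposal; it is just misplaced, since it is needed already to prove (a), on which your (b) and (c) rest. Note also that you have the difficulty in (b) backwards: the direction ``$X$ a $K(\pi,1)$ $\Rightarrow$ $Y$ a $K(\pi,1)$'' that you call ``more delicate'' is the trivial one (a finite \'etale surjective cover of $X$ composes to one of $Y$, and your ``more cleanly'' remark is the whole proof); the other direction is where the base-change compatibility enters. Finally, in (c) your appeal to invariance of fundamental groups and cohomology under purely transcendental extension of a general base field is not a valid tool (cohomology genuinely changes over non-closed fields --- already $H^1$ of the spectrum of a field grows under $F\subset F(t)$); the correct statement is invariance for extensions of \emph{algebraically closed} fields, and the paper's proof routes everything through that case via the chain $F\subseteq F^{sep}\subseteq \bar F\subseteq \bar F'\supseteq (F')^{sep}\supseteq F'$, using limit arguments for the separable-algebraic steps, topological invariance of the \'etale topos for the purely inseparable ones, and Artin's theorem in the middle. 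Your (c) gestures at the same ingredients and is repairable; the gap in (a) is the one that must be fixed.
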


\begin{proof}
(a) Let $\Ff$ be a locally constant constructible sheaf of $\ZZ/(n)$
-modules on $Y_\et$. Then $R^i \rho_* \Ff$ is the sheaf of $\mathrm{F\et}(Y)$ associated to the presheaf
\[ (f:X\to Y) \quad \mapsto \quad H^i(X, f^*\Ff).   \]
	It follows that $R^i \rho_* \Ff = 0$ if and only if the following condition (a') holds: \emph{for every finite \'etale $(f:X\to Y)\in \mathrm{F\et}(Y)$ and every $\zeta\in H^i(X, f^*\Ff)$, there exists a cover $\{g_i:(f_i:X_i\to Y)\to (f:X\to Y)\}_{i\in I}$ such that $f_i^* \zeta = 0 \in H^i(X_i, g_i^* f^* \Ff) = H^i(X_i, f_i^* \Ff)$}. In case $Y$ is connected, each $g_i$ with $X_i$ nonempty is finite \'etale surjective, hence in such case (a') implies (a) by considering $Y'=Y$. The general case follows by considering the connected components of $Y$ separately. 

	We prove that the condition in (a) implies (a'). In the situation of (a'), let $\Ff_0 = f^* \Ff$ for brevity, and consider the sheaf $f_* \Ff_0$. As $f$ is finite \'etale, $f_* \Ff_0$ is locally constant constructible and $R^j f_* \Ff_0 = 0$ for $j>0$, therefore the natural map \eqref{defmu}
	\begin{equation}\label{natiso} 
		\mu:H^i(Y, f_* \Ff_0) \to H^i(X, \Ff_0),  
	\end{equation}
	is an isomorphism. Let $\zeta'\in H^i(Y, f_* \Ff)$ map to $\zeta$ under \eqref{natiso}. By (a), there exists a finite \'etale surjective map $g:Y'\to Y$ with $g^* \zeta' = 0 \in H^i(Y', g^* f_* \Ff_0)$. Form a cartesian diagram
	\[
		\xymatrix{
			X' \ar[r]^{g'} \ar[d]_{f'}  & X \ar[d]^f \\
			Y'\ar[r]_g                       & Y.
		}
	\] 
	Then $g'$ is finite \'etale and surjective. Moreover, by \ref{comm1}, the diagram 
	\[
		\xymatrix{
		    & \zeta' \ar@{|->}[r] \ar@{}[d]|{\indown} & \zeta \ar@{}[d]|{\indown} \\
		\zeta' \ar@{|->}[d] \ar@{}[r]|{\displaystyle\in \hskip .4in} & 	H^i(Y, f_* \Ff_0) \ar[d]_{g^*} \ar[r]^{\mu} & H^i(X, \Ff_0) \ar[d]^{g'^{*}} \\
		0 \ar@{}[r]|{\displaystyle\in \hskip .4in} &	H^i(Y', g^* f_*\Ff_0)\ar[r]_{\mu'}  & H^i(X', g'^* \Ff_0)
		}
	\]
	commutes, hence $g'^* \zeta = 0$.

	(b) The argument is similar to (a). If $X$ is a $K(\pi, 1)$, $Y$ is a $K(\pi, 1)$ as well by the characterization of (a). Suppose that $Y$ is a $K(\pi, 1)$ and let $\Ff_0$ be a locally constant constructible sheaf of $\ZZ/(n)$-modules on $X_\et$, $\zeta\in H^i(X, \Ff_0)$ ($i>0$). Apply the same reasoning as in (a).

	(c) If $F'/F$ is a finite separable extension, this follows from (b) as then $X\to Y$ is finite \'etale and surjective. If $F'$ is a separable closure of $F$, the assertion follows from the characterization in (a) and usual limit arguments. If $F'/F$ is finite and purely inseparable, $X\to Y$ induces equivalences $X_\et\isom Y_\et$ and $X_{f\et}\isom Y_{f\et}$, so there is nothing to prove.  If $F'$ and $F$ are both algebraically closed, the assertion follows from \cite[Exp. XVI, 1.6]{Artin}. If $F'/F$ is arbitrary, pick an algebraic closure $\bar F'$ and let $\bar F$ be the algebraic closure of $F$ in $\bar F'$. We now have a ``path'' from $F$ to $F'$ of the form
	\[ F\subseteq F^{sep} \subseteq \bar F \subseteq \bar F' \supseteq (F')^{sep} \supseteq F' \]
	and the assertion follows from the preceding discussion.
\end{proof}

\begin{theorem}[{M. Artin, follows from \cite[Exp. XI, 3.3]{Artin}, cf. \cite[Lemma 5.5]{Olsson}}]
\label{artin}
Let $Y$ be a smooth scheme over a field of characteristic zero, $y$ a point of $Y$. There exists an open neighborhood $U$ of $y$ which is a $K(\pi, 1)$.
\end{theorem}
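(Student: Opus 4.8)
The plan is to reproduce Artin's original strategy via \emph{elementary fibrations} and an induction on $\dim Y$. Since $\chara k = 0$ forces $k$ to be infinite, Bertini-type genericity arguments are available over $k$ itself; and since the $K(\pi,1)$ property is insensitive to extension of the base field (Proposition~\ref{kpi1prop}(c)), I am free to pass to $\bar k$ whenever it is convenient to verify the property on geometric fibers. Recall that an \emph{elementary fibration} is a morphism $f\colon X\to S$ fitting into a diagram
\[ X \hookrightarrow \bar X \hookleftarrow Z, \]
where $X\hookrightarrow \bar X$ is a fiberwise-dense open immersion, $\bar X\to S$ is smooth and projective with geometrically connected fibers of dimension $1$, and $Z=\bar X\setminus X$ with $Z\to S$ finite \'etale and surjective; thus the fibers of $f$ are smooth geometrically connected affine curves.

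The heart of the argument is the geometric input \cite[Exp. XI, 3.3]{Artin}: every point $y$ of a smooth $k$-variety $Y$ admits a Zariski-open neighborhood $U$ that is an \emph{Artin neighborhood}, i.e. one that carries a tower
\[ U = U_d \xto{f_d} U_{d-1} \xto{f_{d-1}} \cdots \xto{f_1} U_0 = \Spec k \]
in which each $f_i$ is an elementary fibration. I would take this as the main geometric lemma and prove it by induction on $d=\dim Y$: after embedding $U$ in projective space and projecting from a generic linear center, a Bertini/Lefschetz-pencil argument produces an elementary fibration $f_d\colon U\to U_{d-1}$ onto a smooth variety of dimension $d-1$, to which the inductive hypothesis applies.

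For the cohomological step I would show that if $f\colon X\to S$ is an elementary fibration and $S$ is a $K(\pi,1)$, then $X$ is a $K(\pi,1)$. Two ingredients feed in: (i) each geometric fiber is a smooth affine curve over a separably closed field, whose fundamental group is free profinite, hence of cohomological dimension $1$, so that such a curve is a $K(\pi,1)$ (the comparison maps $\rho^i$ of \ref{intro:kpi1} are isomorphisms, with $H^i$ vanishing for $i\geq 2$); and (ii) the homotopy exact sequence
\[ 1 \to \pi_1(X_{\bar s}, \bar x) \to \pi_1(X, \bar x) \to \pi_1(S, \bar s) \to 1 \]
for an elementary fibration, a consequence of smooth and proper base change together with the structure of the fundamental group of curves. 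Given these, I would compare the Leray spectral sequence $H^p(S, R^q f_* \Ff)\Rightarrow H^{p+q}(X,\Ff)$ with the Hochschild--Serre spectral sequence of the displayed group extension. Smooth and proper base change identify the $R^q f_*\Ff$ with the locally constant sheaves computing fiber cohomology, so the two $E_2$-pages and the comparison maps agree; since both $S$ and the geometric fibers are $K(\pi,1)$'s, the maps $\rho^i$ for $X$ are forced to be isomorphisms. Feeding $U_0=\Spec k$ (trivially a $K(\pi,1)$) into this step and climbing the tower one elementary fibration at a time then yields that $U=U_d$ is a $K(\pi,1)$, completing the induction.

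The main obstacle is the geometric construction of the tower, namely producing elementary fibrations Zariski-locally. This is precisely where $\chara k = 0$ is essential: the generic-projection argument must simultaneously keep the generic fiber a smooth geometrically connected curve and arrange a compactification whose boundary $Z$ is finite \'etale over the base. Verifying smoothness of the generic member of the pencil and the \'etaleness of the divisor at infinity is the delicate point, and it is exactly the step at which the naive Bertini argument breaks down in positive characteristic, explaining the characteristic hypothesis in the statement.
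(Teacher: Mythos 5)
The paper does not actually prove this statement---it is quoted from \cite[Exp.~XI, 3.3]{Artin} and \cite[Lemma 5.5]{Olsson}---and your proposal reconstructs precisely the classical argument behind that citation: elementary fibrations, induction on dimension, affine curves as $K(\pi,1)$'s, and the comparison of the Leray spectral sequence with Hochschild--Serre. The skeleton is the right one, and your handling of the non-closed base field (construct the fibrations over $k$ itself using that $k$ is infinite, and use Proposition~\ref{kpi1prop}(c) only to check properties of geometric fibers) is the correct way around the fact that \cite[Exp.~XI, 3.3]{Artin} is stated over an algebraically closed field.

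There are, however, two related flaws in the cohomological step. First, the local constancy of $R^q f_* \Ff$ and its compatibility with the fibers is \emph{not} a consequence of ``smooth and proper base change'': an elementary fibration $f$ is not proper. You must pass through the compactification $j\colon X \hookrightarrow \bar X$, write $Rf_* = R\bar f_* \circ Rj_*$, and control $Rj_*\Ff$ along the boundary divisor $Z$; this requires knowing that $\Ff$ has no wild ramification along $Z$, which is exactly where $\chara k = 0$ is used. Second, and consequently, your closing diagnosis of the characteristic hypothesis is wrong: the generic-projection construction of elementary fibrations carries no characteristic restriction (it works over any algebraically closed, or with care any infinite, field), so Bertini is not the obstruction in characteristic $p$. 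What fails there is the step just described: an $\ell$-torsion locally constant sheaf with $\ell \neq p$ can still be wildly ramified along $Z$ (its monodromy group may have order divisible by $p$), and then $R^1 f_*\Ff$ need not be locally constant, as the Swan conductor can jump. So the characteristic-zero hypothesis belongs to the spectral-sequence comparison, not to the geometric lemma; indeed the conclusion of the theorem remains true in positive characteristic (for coefficients invertible on $Y$), but by entirely different methods. In characteristic zero everything you need is true and classical, so your proof can be completed, but as written the justification of the key base-change/local-constancy input is not valid.
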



\begin{theorem}[{Faltings, \cite[Lemma 2.1]{Faltings}, cf. \cite[Theorem 5.4]{Olsson}}]
\label{faltings}
Let $S$ be as in \ref{setup}, let $Y$ be a smooth $S$-scheme, and let $y$ be a point of $Y$. There exists an open neighborhood $U$ of $y$ for which $U_\eta$ is a $K(\pi, 1)$.
\end{theorem}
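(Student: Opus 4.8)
The plan is to adapt Artin's proof of Theorem~\ref{artin} to the arithmetic situation: replace ``elementary fibration over a field'' by ``elementary fibration relative to $S$'', and verify the $K(\pi,1)$ condition only after passing to the generic fibre, where the residue characteristic is $0$ so that Artin's machinery applies without change. I would induct on the relative dimension $d=\dim_y(Y/S)$. If $d=0$ then $Y\to S$ is \'etale near $y$, so after shrinking $U_\eta\to\eta$ is finite \'etale; it is a finite disjoint union of $\Spec L$ with $L/K$ finite separable, and each such factor satisfies $(\Spec L)_{f\et}\isom B\pi_1$ with $\rho$ an equivalence, hence is trivially a $K(\pi,1)$ (and $U_\eta$ satisfies \ref{conncomp}).

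For the inductive step the essential geometric input is the construction, in a neighbourhood $U$ of $y$, of an \emph{elementary fibration relative to $S$}
\[ f:U\longrightarrow Y_1, \]
i.e. a morphism fitting into $U\hookrightarrow\bar U\hookleftarrow Z$ in which $\bar f:\bar U\to Y_1$ is smooth and projective with geometrically connected one-dimensional fibres, $Z\to Y_1$ is finite \'etale and meets every fibre, $U=\bar U\setminus Z$, and $Y_1$ is smooth over $S$ of relative dimension $d-1$. Granting this, I would use the inductive hypothesis to shrink $Y_1$ around $f(y)$ so that $(Y_1)_\eta$ becomes a $K(\pi,1)$, and replace $U$ by $f^{-1}(Y_1)$. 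On generic fibres $f_\eta:U_\eta\to(Y_1)_\eta$ is then an elementary fibration over the characteristic-zero field $K$: its geometric fibres are smooth affine curves, hence $K(\pi,1)$'s, the sheaves $R^qf_{\eta*}\Ff$ vanish for $q>1$ by the cohomological dimension of affine curves, and they are locally constant constructible because $\bar f$ is smooth and proper and $Z\to Y_1$ is finite \'etale. Comparing the Leray spectral sequence of $f_\eta$ with the Hochschild--Serre spectral sequence of the homotopy exact sequence
\[ 1\to\pi_1(\text{geom. fibre})\to\pi_1(U_\eta)\to\pi_1((Y_1)_\eta)\to 1, \]
whose $E_2$-pages agree once the fibres and the base are known to be $K(\pi,1)$'s, shows that $U_\eta$ is a $K(\pi,1)$; equivalently, in the criterion of Proposition~\ref{kpi1prop}(a) one kills the $H^0\big((Y_1)_\eta, R^1 f_{\eta*}\Ff\big)$-part of a class by a cover coming from the $K(\pi,1)$ fibres and the $H^{\ge 1}\big((Y_1)_\eta,-\big)$-part by a cover pulled back from the $K(\pi,1)$ base.

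The hard part will be the construction of the relative elementary fibration $f$ around a point $y$ that may lie in the special fibre. Over a field Artin produces it by embedding $U$ projectively and choosing a sufficiently general linear projection, generic smoothness and Bertini-type arguments ensuring that the fibres are curves and that the complement $Z$ is finite \'etale over the base. Over $S$ one must run this argument with coordinates over $V$, and the two conditions that need care are that $Y_1$ come out smooth over all of $S$ (so that the induction stays within $S$-schemes) and that $Z\to Y_1$ be finite \'etale relatively to $S$; these are open conditions on the projection whose loci must be checked to contain $y$ and to be non-empty along the special fibre. The only genuinely arithmetic difficulty arises when the residue field $k$ is finite, in which case there may be too few projections defined over $V$; this is dealt with by a point-counting argument (enlarging the embedding so that the relevant dense open locus of projections acquires a $k$-point), or by base change to a connected unramified extension of $V$ combined with the descent properties of Proposition~\ref{kpi1prop}(b),(c), exactly as in Artin's treatment of finite base fields.
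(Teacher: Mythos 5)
First, a point of order: the paper does not prove Theorem \ref{faltings} at all --- it is imported as a black box from \cite[Lemma 2.1]{Faltings}, with \cite[Theorem 5.4]{Olsson} cited for the proof --- so there is no internal argument to measure your proposal against. What you have reconstructed is, in outline, exactly the argument of those references: induction on relative dimension, elementary fibrations relative to $S$ in the style of Artin, and verification of the $K(\pi,1)$ property on the generic fibre, where the two-row Leray spectral sequence of an affine-curve fibration with locally constant $f_{\eta*}\Ff$ and $R^1f_{\eta*}\Ff$ interacts with the inductive hypothesis on the base. That part of your write-up is essentially sound, with one local correction: the graded piece in $H^0\bigl((Y_1)_\eta, R^1f_{\eta*}\Ff\bigr)$ (which occurs only for $i=1$) is killed not by ``a cover coming from the fibres'' but by the torsor trick --- a degree-one class of a locally constant constructible sheaf dies on the finite \'etale cover of $U_\eta$ that the class itself defines. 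Using this, for $i\geq 2$ both graded pieces lie in positive-degree cohomology of the $K(\pi,1)$ base, so you never need exactness of the homotopy sequence.

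The genuine gap is the step you explicitly defer: constructing the relative elementary fibration through a prescribed point of the special fibre. This is the actual content of Faltings' lemma, and your two proposed remedies for the finite-residue-field case are weaker than you suggest. ``Point counting'' alone proves nothing: over a fixed finite field $k$, a dense open subset of the parameter space of projections can perfectly well have no $k$-rational point (the standard failure of Bertini over finite fields); the re-embedding trick you allude to (composing with Veronese embeddings of growing degree) is the correct fix, but it is a substantial argument in its own right. The alternative route --- base change to an unramified extension $V'/V$ and descent via Proposition \ref{kpi1prop}(b),(c) --- painlessly delivers an \emph{\'etale} neighbourhood of $y$ with $K(\pi,1)$ generic fibre (which would in fact suffice for the use of Theorem \ref{faltings} made in the proof of Theorem \ref{main}), but not the \emph{Zariski} neighbourhood the statement asserts: to invoke Proposition \ref{kpi1prop}(b) you need your open $U'\subseteq Y_{V'}$ to be finite \'etale surjective onto an open subset of $Y$, which forces $U'$ to contain the whole fibre of $Y_{V'}\to Y$ over $y$, and you cannot arrange this by enlarging $U'$, because the $K(\pi,1)$ property is preserved neither by unions nor by passage to open subsets. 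So the architecture of your proof is the right one --- the same as in the cited sources --- but the arithmetic input that makes it a proof is still missing.
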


\section{Some logarithmic geometry}\label{s:loggeom}

In this section, we review the relevant facts from log geometry and investigate the local structure of a log smooth $S$-scheme (with the standard log structures on $X$ and $S$). We also state the logarithmic version of absolute cohomological purity, used in Section \ref{s:nakayama}.

\subsection{Conventions about log geometry}

If $P$ is a monoid, $\bar P$ denotes the quotient of $P$ by its group $P^*$ of invertible elements, and $P\to P^{gp}$ is the universal (initial) morphism form $P$ into a group. $P$ is called \emph{fine} if it is finitely generated and integral (i.e., $P\to P^{gp}$ is injective). A \emph{face} of a monoid $P$ is a submonoid $F\subseteq P$ satisfying $x+y\in F \Rightarrow x, y\in F$. For an integral monoid $P$ and face $F$, the \emph{localization} of $P$ at $F$ is the submonoid $P_F$ of $P^{gp}$ generated by $P$ and $-F$. It satisfies the obvious universal property. If $Q$ is a submonoid of an integral monoid $P$, the quotient $P/Q$ is defined to be the image of $P$ in $P^{gp}/Q^{gp}$. 

For a monoid $P$, $\AA_P = \Spec (P\to \ZZ[P])$ is the log scheme associated to $P$; for a homomorphism $\theta:P\to Q$, $\AA_\theta:\AA_Q\to\AA_P$ is the induced morphism of log schemes.  A morphism $(X, \Mm_X)\to (Y, \Mm_Y)$ of log schemes is \emph{strict} if the induced map $f^\flat: f^* \Mm_Y \to \Mm_X$ is an isomorphism. A strict map to some $\AA_P$ is called a \emph{chart}. A log scheme is \emph{fine} if \'etale locally it admits a chart with target $\AA_P$ for a fine monoid $P$. If $j:U\to X$ is an open immersion, the \emph{compactifying log structure} on $X$ associated to $U$ is the preimage of $j_* \Oo_U^*$ under the restriction map $\Oo_X\to j_* \Oo_U$. For a log scheme $(X, \Mm_X)$, we denote by $(X, \Mm_X)_{tr}$ the complement of the support of $\bar \Mm_X$. It is an open subset of $X$ if $(X, \Mm_X)$ is fine, the biggest open subset on which $\Mm_X$ is trivial. \emph{References:} \cite{Kato,Ogus,HofM}. 



We recall Kato's structure theorem for log smooth morphisms (which for our purposes might as well serve as a definition):

\begin{theorem}[{\cite[Theorem 3.5]{Kato}}] \label{kato}
Let $f:(X, \Mm_X)\to (S, \Mm_S)$ be a morphism of fine log schemes. Assume that we are given a chart $\pi:(S, \Mm_S)\to \AA_Q$ with $Q$ a fine monoid. Then $f$ is log smooth if and only if, \'etale locally on $X$, there exists a fine monoid $P$, a map $\rho: Q\to P$ such that the kernel and the torsion part of the cokernel of $\rho^{gp}:Q^{gp}\to P^{gp}$ are finite groups of order invertible on $S$, and a commutative diagram 
\[ 
     \xymatrix{
     	(X, \Mm_X) \ar[r] \ar[dr]_f & \AA_{P,\rho,\pi} \ar[r]\ar[d]  &  \AA_P  \ar[d]^{\AA_\rho} \\
     	           & (S,\Mm_S)    \ar[r]_\pi              &  \AA_Q,   
     } 
\]
where the square is cartesian and $(X, \Mm_X)\to \AA_{P, \rho, \pi}$ is strict and \'etale (as a morphism of schemes). 
\end{theorem}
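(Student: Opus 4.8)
The plan is to prove the two implications separately, working from the definition of log smoothness by the infinitesimal lifting criterion: $f$ is log smooth if and only if it is locally of finite presentation and formally log smooth, meaning that for every strict square-zero extension $T_0\hookrightarrow T$ over $(S,\Mm_S)$ and every $(S,\Mm_S)$-morphism $T_0\to(X,\Mm_X)$, a lift $T\to(X,\Mm_X)$ exists \'etale-locally on $T$.

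\emph{Sufficiency.} Suppose we are given the data $(P,\rho)$ and the diagram. The morphism $\AA_\rho:\AA_P\to\AA_Q$ corresponds to $\ZZ[Q]\to\ZZ[P]$ with the canonical log structures, and I would first verify directly from the lifting criterion that $\AA_\rho$ is log smooth precisely under the stated hypothesis on $\rho^{gp}$: a lifting problem for $\AA_\rho$ amounts to lifting a monoid homomorphism against a square-zero extension, and the obstruction group is annihilated by the order of $\ker(\rho^{gp})$ and of the torsion of $\coker(\rho^{gp})$, so when these orders are invertible on $S$ the lift exists. Granting this, $\AA_{P,\rho,\pi}=\AA_P\times_{\AA_Q}(S,\Mm_S)$ is log smooth over $(S,\Mm_S)$ by base change, and since $(X,\Mm_X)\to\AA_{P,\rho,\pi}$ is strict and \'etale (hence log smooth), the composite $f$ is log smooth.

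\emph{Necessity.} Here the first step is to reduce the problem to the construction of a suitable chart. Using the \'etale-local existence of charts for a fine log structure, at a geometric point $\bar x$ I would choose a fine monoid $P$ and a chart $\beta:P\to\Mm_X$ inducing an isomorphism $P\isom\bar\Mm_{X,\bar x}$, arranged to be compatible with $\pi$ through a homomorphism $\rho:Q\to P$ lying over the charts. After shrinking $X$ this yields a strict $(S,\Mm_S)$-morphism $g:(X,\Mm_X)\to Z:=\AA_{P,\rho,\pi}$. The remaining task is to adjust the chart so that $g$ becomes \'etale. Since $g$ is strict, the relative log differentials $\Omega^1_{(X,\Mm_X)/Z}$ coincide with the ordinary Kähler differentials $\Omega^1_{X/Z}$, and the first exact sequence for the composite $X\to Z\to(S,\Mm_S)$ takes the form $g^*\Omega^1_{Z/(S,\Mm_S)}\to\Omega^1_{(X,\Mm_X)/(S,\Mm_S)}\to\Omega^1_{X/Z}\to 0$, where $\Omega^1_{Z/(S,\Mm_S)}=\Oo_Z\otimes_\ZZ\coker(\rho^{gp})$ and $\Omega^1_{(X,\Mm_X)/(S,\Mm_S)}$ is locally free because $f$ is log smooth. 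The idea is to choose $P$ so that the left-hand map is an isomorphism: one enlarges the chart by adjoining elements of $\Mm_X$ whose $d\log$'s complete the image of $d\log Q$ to a basis of the locally free module $\Omega^1_{(X,\Mm_X)/(S,\Mm_S)}$, while keeping $\ker(\rho^{gp})$ and the torsion of $\coker(\rho^{gp})$ of order invertible on $S$ (so that $Z$ is log smooth over $S$ with $\Omega^1_{Z/(S,\Mm_S)}$ locally free of the correct rank). Once the left map is an isomorphism, the exact sequence forces $\Omega^1_{X/Z}=0$, so $g$ is unramified; and $g$ is formally log smooth because any lifting problem for $g$ is in particular one for $X$ over $(S,\Mm_S)$, which is solvable \'etale-locally by log smoothness of $f$. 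As $g$ is strict, log smooth, and unramified, it is \'etale, which is exactly the required diagram.

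\emph{Main obstacle.} I expect the difficulty to lie entirely in the necessity direction, and within it in the construction of a chart for which $g$ is \'etale. Two points demand care: the \'etale-local existence of a chart $P\isom\bar\Mm_{X,\bar x}$ compatible with the given chart on $S$, which is a nontrivial lemma on fine log structures; and the precise enlargement of $P$ that upgrades $g^*\Omega^1_{Z/(S,\Mm_S)}\to\Omega^1_{(X,\Mm_X)/(S,\Mm_S)}$ from a surjection to an isomorphism. It is exactly in controlling this differential map, and in keeping $Z$ log smooth over $S$, that the hypotheses on $\rho^{gp}$ and the invertibility of the relevant orders are forced upon us.
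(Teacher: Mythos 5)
The paper does not actually prove this statement: it is quoted verbatim from Kato's paper \cite[Theorem 3.5]{Kato}, and the author even remarks that for the purposes of this paper the theorem ``might as well serve as a definition'' of log smoothness. So your proposal can only be measured against Kato's original argument, which it follows in outline: sufficiency by proving $\AA_\rho$ log smooth via the infinitesimal lifting criterion and then invoking stability under base change and composition with strict \'etale maps; necessity by building a chart from elements of $\Mm_X$ whose $d\log$'s give a basis of the locally free module $\Omega^1_{(X,\Mm_X)/(S,\Mm_S)}$, and detecting \'etaleness of the induced map $g:(X,\Mm_X)\to Z=\AA_{P,\rho,\pi}$ on differentials. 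That is the right strategy, and your identification of the two delicate points (chart existence compatible with the chart on $S$, i.e. Kato's Lemma 2.10, and the enlargement of $P$) is accurate.

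However, there is a genuine gap in the necessity direction, at the step ``$g$ is formally log smooth because any lifting problem for $g$ is in particular one for $X$ over $(S,\Mm_S)$.'' This reasoning is fallacious: solving the lifting problem for the composite $f=(Z\to S)\circ g$ produces a lift $h:T\to X$ over $(S,\Mm_S)$, but nothing guarantees that $g\circ h$ agrees with the given map $T\to Z$. If the implication held as stated, every $S$-morphism out of a log smooth $S$-log scheme would be log smooth --- e.g. a strict closed immersion of a point into $\AA^1_S$ --- which is absurd. The repair is Kato's correction-of-lifts argument, and it is precisely here that the isomorphism $g^*\Omega^1_{Z/(S,\Mm_S)}\isom\Omega^1_{(X,\Mm_X)/(S,\Mm_S)}$ you arranged must be used in full strength, not merely to conclude $\Omega^1_{X/Z}=0$: the discrepancy between $g\circ h$ and the given $T\to Z$ is a log derivation, an element of $\Hom(g^*\Omega^1_{Z/(S,\Mm_S)}\otimes\Oo_{T_0},I)$ where $I$ is the square-zero ideal; transporting it through the isomorphism gives an element of $\Hom(\Omega^1_{(X,\Mm_X)/(S,\Mm_S)}\otimes\Oo_{T_0},I)$, and translating $h$ by this element (the set of lifts over $S$ being a torsor under that group) yields a lift that does commute with the projection to $Z$. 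Without this correction step the existence of lifts over $Z$ --- the actual content of the necessity direction --- is unproven; unramifiedness alone cannot substitute for it.
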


\subsection{} \label{chart}
Suppose that $f:(X, \Mm_X)\to \AA_P$ is a chart with $P$ a fine monoid, and $\bar x\to X$ is a geometric point. Let $F\subseteq P$ be the preimage of $0$ under the induced homomorphism $P\to \bar \Mm_{X,\bar x}$. Then $F$ is a face of $P$, and $P$ injects into the localization $P_F$. Moreover, the induced map $P/F\to \bar \Mm_{X,\bar x}$ is an isomorphism.

As $P$ is finitely generated, $F$ is finitely generated as a face, hence the natural map $\AA_{P_F}\to \AA_P$ is an open immersion: if $F$ is generated as a face by an element $a\in P$, then $\AA_{P_F} = D(a)$. Let us form a cartesian diagram
\[
	\xymatrix{
		(U, \Mm_X|_U) \ar[d] \ar[r]  & \AA_{P_F} \ar[d] \\
		(X, \Mm_X) \ar[r]_f            & \AA_P.
 	}
\]
Then $U = D(f^* a)$ and $\bar x$ lies in $U$ because $f^* a$ is an invertible element of $\Oo_{X, x}$ by the construction of $F$. 

It follows that any chart $f:(X, \Mm_X)\to \AA_P$ as above can be locally replaced by one for which the homomorphism $\bar P\to \bar \Mm_{X,\bar x}$ is an isomorphism, without changing the local properties of $f$ (e.g. without sacrificing \'etaleness if $f$ is \'etale). 

\subsection{} \label{our-setting}
In the situation of \ref{setup}, let $f:(X, \Mm_X)\to (S, \Mm_S)$ be a log smooth morphism. 
 Applying Theorem \ref{kato} to a chart $\pi: (S, \Mm_S)\to \AA_\NN$ given by a uniformizer $\pi$ of $V$, we conclude that, \'etale locally on $X$, there exists a strict \'etale $g:(X, \Mm_X)\to \AA_{P, \rho, \pi}$ where
\begin{equation}\label{defaprp} 
	\AA_{P, \rho, \pi} = \Spec\left( P\to\frac{V[P]}{(\pi - \rho)}\right) 
\end{equation}
for a fine monoid $P$ and a non-invertible element $\rho \in P$ with the property that $(P/\rho)^{gp}$ is $p$-torsion free. 

\subsection{} \label{our2}
In the above situation, assume moreover that $X_\eta$ is smooth over $\eta$. Localizing $P$, we can assume that the scheme underlying $(\AA_{P,\rho,\pi})_\eta$ is smooth over $\eta$ as well. But $(\AA_{P,\rho,\pi})_\eta$ is isomorphic to $\Spec(P/\rho \to K[P/\rho])$. Note that for a fine monoid $M$, $\Spec K[M]$ is smooth over $K$ if and only if $\bar M$ is a free monoid. It follows that $\bar P/\rho$ is free, and therefore the stalks of $\Mm_{X/S} := \Mm_{X}/f^\flat \Mm_S$ are free monoids. Moreover, every geometric point $\bar x$ of $X$ has an \'etale neighborhood $U$ such that $(U_\eta, \Mm_{X}|_{U_\eta})_{tr}$ is the complement of a divisor with strict normal crossings on $U_\eta$.

\subsection{Absolute cohomological purity} \label{pur}
We will need the following result (cf. \cite[Proposition 2.0.2]{Nakayama}): let $(X, \Mm_X)$ be a regular \cite[Definition 2.1]{KatoToric} (in particular, fine and saturated) log scheme such that $X$ is locally noetherian. Let $X^\circ = (X, \Mm_X)_{tr}$ be the biggest open subset on which $\Mm_X$ is trivial, and let $j:X^\circ\to X$ be the inclusion. Let $n$ be an integer invertible on $X$. Then for any $q\geq 0$, we have a natural isomorphism
\begin{equation}\label{puriso}
	R^q j_* (\ZZ/n\ZZ) \isom \bigwedge\nolimits^q \bar \Mm^{gp}_X \otimes \ZZ/n\ZZ(-1). 
\end{equation}

\section{\texorpdfstring{$\eta$-\'etale maps and Noether normalization}{eta-\'etale maps and Noether normalization}}
\label{s:noether}

This section contains the key technical point used in the proof of Theorem \ref{main}. First, we prove a (slightly spiced-up) relative version of the Noether Normalization Lemma (Proposition \ref{nagatatrick}). Then we study $\eta$-\'etale maps $f:X\to Y$ over $S$, that is, maps which are \'etale in an open neighborhood of the closed fiber $X_s$ of $X$. The main result is Proposition \ref{preet-normal}, which asserts that \emph{in mixed characteristic} we can often replace an $\eta$-\'etale map $f':X\to \AA^d_S$ by a quasi-finite $\eta$-\'etale map $f:X\to \AA^d_S$.

\subsection{Relative Noether normalization}\label{normalization}

\begin{lemma}\label{degreetrick}
Let $F$ be a field, and let $a\in F[x_1, \ldots, x_n]$ be a nonzero polynomial. For large enough $m$, the polynomial 
\[ a(x_1 - x_n^{m}, x_2 - x_n^{m^2}, \ldots, x_{n-1} - x_n^{m^{n-1}}, x_n),\]
treated as a polynomial in $x_n$ over $F[x_1, \ldots, x_{n-1}]$, has a constant leading coefficient.
\end{lemma}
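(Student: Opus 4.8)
The plan is to track, for each monomial of $a$, its top-degree term in $x_n$ after the substitution, and to show that for $m$ large these top degrees are pairwise distinct; then the overall leading term comes from a single monomial and therefore has a constant (nonzero) coefficient.

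Write $a = \sum_{\alpha} c_\alpha x^\alpha$, the sum taken over multi-indices $\alpha = (\alpha_1, \ldots, \alpha_n)$ with $c_\alpha \in F$, and let $A = \{\alpha : c_\alpha \neq 0\}$ be the finite support. Substituting $x_i \mapsto x_i - x_n^{m^i}$ for $1 \le i \le n-1$ (and $x_n \mapsto x_n$) turns the monomial $x^\alpha$ into $\prod_{i=1}^{n-1}(x_i - x_n^{m^i})^{\alpha_i}\cdot x_n^{\alpha_n}$. Expanding each binomial, the unique term of highest degree in $x_n$ is $(-1)^{\alpha_1 + \cdots + \alpha_{n-1}} x_n^{e(\alpha)}$, where $e(\alpha) := \alpha_n + \sum_{i=1}^{n-1} m^i \alpha_i$; every other term in the expansion involves some $x_i$ with $i < n$ and hence has strictly smaller $x_n$-degree. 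Thus, viewed as a polynomial in $x_n$, the substituted monomial has degree exactly $e(\alpha)$ with constant leading coefficient $(-1)^{\alpha_1 + \cdots + \alpha_{n-1}}$.

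The decisive step is then to observe that $\alpha \mapsto e(\alpha)$ is injective on $A$ as soon as $m$ exceeds every exponent occurring in $a$ (for instance $m > \deg a$): the integer $e(\alpha)$ is precisely the one whose base-$m$ digits are $(\alpha_n, \alpha_1, \ldots, \alpha_{n-1})$, and when all $\alpha_i < m$ the base-$m$ expansion is unique and recovers $\alpha$. For such $m$, let $\alpha^\ast$ be the element of $A$ maximizing $e(\alpha)$. Since distinct monomials land in distinct top $x_n$-degrees, no other monomial can contribute to degree $e(\alpha^\ast)$, so the substituted polynomial has $x_n$-degree $e(\alpha^\ast)$ with leading coefficient $c_{\alpha^\ast}(-1)^{\alpha^\ast_1 + \cdots + \alpha^\ast_{n-1}}$, a nonzero element of $F$. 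The only point requiring care — which I regard as the heart of the argument rather than a genuine obstacle — is ruling out cancellation at the top degree, and this is exactly what the injectivity of $e$ delivers.
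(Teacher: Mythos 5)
Your proof is correct, and it is precisely the standard argument that the paper invokes by reference (its ``proof'' is just a citation to Mumford and the Stacks Project): expand each monomial after substitution, note the top $x_n$-degree is $e(\alpha)=\alpha_n+\sum_i m^i\alpha_i$ with constant sign coefficient, and use uniqueness of base-$m$ expansions for $m$ exceeding all exponents to rule out collisions and hence cancellation. Nothing is missing; the injectivity of $e$ on the support is exactly the key point, and you have it right.
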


\begin{proof}
Standard, cf. e.g. \cite[\S 1]{Mumford} or \stacks{051N}.
\end{proof}

\begin{definition}
Let $f:X\to Y$ be a map of schemes over some base scheme $S$. We call $f$ \emph{fiberwise finite relatively to $S$} if for every point $s\in S$, the induced map $X_s \to Y_s$ is finite.
\end{definition}

Let $V, S, \ldots$ be as in \ref{setup} (the assumptions on $K$ and $k$ are unnecessary here). The following is a relative variant of Noether normalization. In the applications we will take $N$ to be a high power of $p$.

\begin{proposition} \label{nagatatrick}
Let $X=\Spec R$ be a flat affine $S$-scheme of finite type, let $d\geq 0$ be an integer such that $\dim (X/S)\leq d$, and let $x_1, \ldots, x_d \in R$. For any integer $N\geq 1$, there exist $y_1, \ldots, y_d \in R$ such that 
\begin{enumerate}[(i)]
	\item the map $f = (f_1, \ldots, f_d) : X\to \AA^d_S$, $f_i = x_i + y_i$, is fiberwise finite relatively to $S$.
	\item the $y_i$ belong to the subring generated by $N$-th powers of elements of $R$,
\end{enumerate}
\end{proposition}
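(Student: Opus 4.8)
The plan is to adapt Nagata's proof of the Noether normalization lemma to the relative setting over $S$, removing one auxiliary generator at a time while modifying only the first $d$ coordinates and keeping track of both fibers of $X\to S$ simultaneously. Adjoining the given $x_i$ to a finite set of $V$-algebra generators, I would write $R=V[x_1,\dots,x_d,z_1,\dots,z_r]$, and let $R^{[N]}\subseteq R$ be the subring generated by the $N$-th powers of elements of $R$; note that $R^{[N]}$ is closed under addition and multiplication and that $(R')^{[N]}\subseteq R^{[N]}$ for any subring $R'\subseteq R$. The induction is on the number $r$ of auxiliary generators. When $r=0$ we have $R=V[x_1,\dots,x_d]$, so the map $X\to\AA^d_S$ with coordinates $x_1,\dots,x_d$ is a closed immersion, hence finite and in particular fiberwise finite; here $y_i=0$ works.

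For the inductive step, the first task is to find a single algebraic relation visible on both fibers. Since $\dim X_\eta\le d<d+r$, the elements $x_i,z_j$ are algebraically dependent over $K$, so some nonzero $a\in K[X_1,\dots,X_d,Z_1,\dots,Z_r]$ vanishes on them; clearing denominators and dividing by the largest power of a uniformizer $\pi$ dividing all coefficients, I may take $a\in V[X,Z]$ primitive. Flatness of $R$ over $V$ makes $R$ torsion free, so $R\hookrightarrow R\otimes_V K$ and therefore $a(x,z)=0$ already holds in $R$, while primitivity ensures that the reduction $\bar a\in k[X,Z]$ is nonzero and kills the reductions of $x_i,z_j$ in $R\otimes_V k$.

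Next I would apply Lemma~\ref{degreetrick} with $Z_r$ as the distinguished variable. Introducing new generators $x_i'=x_i+z_r^{m^i}$ and $z_j'=z_j+z_r^{m^{d+j}}$ and setting $R'=V[x_1',\dots,x_d',z_1',\dots,z_{r-1}']$, the relation $a(x,z)=0$ becomes the substituted polynomial of the lemma, which for large $m$ is monic up to a nonzero constant in $z_r$ over $R'$. The main obstacle is that one substitution must work on both fibers at once: over $K$ any nonzero leading coefficient suffices, but the monomial of $a$ that dominates over $K$ may have coefficient in $\pi V$ and hence disappear modulo $\pi$, so I must apply the degree trick separately to $\bar a$ over $k$. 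Since the lemma holds for all sufficiently large $m$, I would pick $m$ beyond the thresholds for both $a$ and $\bar a$ and, to secure condition (ii), divisible by $N$; then each exponent $m^i$ with $i\ge 1$ is divisible by $N$, so every correction $z_r^{m^i}$ lies in $R^{[N]}$. The substituted relation and its reduction now exhibit $z_r$ as integral over $R'$ on the generic and special fibers respectively, so $\Spec R\to\Spec R'$ is fiberwise finite.

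Finally I would run the induction on $R'$ and assemble the answer. As a subring of the torsion-free $R$, $R'$ is flat and of finite type over $V$, generated by $x_1',\dots,x_d'$ together with the $r-1$ elements $z_j'$. Its generic fiber has $\dim\Spec(R'\otimes_V K)=\dim\Spec(R\otimes_V K)\le d$, since $R'\otimes_V K\hookrightarrow R\otimes_V K$ is finite and injective; flatness (every component of $\Spec R'$ dominates $S$) then forces $\dim\Spec(R'\otimes_V k)\le d$ as well, so the hypotheses hold for $R'$ with the distinguished elements $x_i'$. By induction there are $y_i'\in(R')^{[N]}$ making $(x_1'+y_1',\dots,x_d'+y_d')\colon\Spec R'\to\AA^d_S$ fiberwise finite. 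Setting $y_i=z_r^{m^i}+y_i'\in R^{[N]}$ gives $x_i+y_i=x_i'+y_i'$, so $X\to\AA^d_S$ factors as $\Spec R\to\Spec R'\to\AA^d_S$; as a composition of fiberwise finite maps is fiberwise finite, this closes the induction.
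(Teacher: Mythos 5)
Your proof is correct and follows essentially the same route as the paper's: induction on the number of auxiliary generators, with Lemma \ref{degreetrick} applied simultaneously to a relation over $K$ and its reduction over $k$, corrections by powers of the last generator with exponents divisible by $N$, and the same flatness/finiteness checks to propagate the inductive hypotheses. The only cosmetic difference is how the key relation is produced: you scale a $K$-relation to a primitive polynomial over $V$ (using torsion-freeness and the generic-fiber dimension), whereas the paper directly picks an element of the defining ideal with nonzero reduction mod $\pi$ (using the special-fiber dimension); both are valid.
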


\begin{proof}
Write $R = V[x_1, \ldots, x_d, x_{d+1}, \ldots, x_n]/I$. The proof is by induction on $n-d$. If $n=d$, then the map $(x_1, \ldots, x_d):X\to \AA^d_S$ is a closed immersion, and we can take $y_i = 0$. Suppose that $n>d$.

Let $a\in V[x_1, \ldots, x_n]$ be an element of $I$ with nonzero image in $k[x_1, \ldots, x_n]$. Such an element exists, for otherwise $X_s$ is equal to $\AA^n_s$, hence cannot be of dimension $\leq d$ as $n>d$. 

For an integer $m\geq 1$, consider the elements 
\[ z_i = x_i + x_n^{(Nm)^i}, \quad i=1, \ldots, n-1, \]
and let $R'\subset R$ be the $V$-subalgebra generated by $z_1, \ldots, z_{n-1}$. By Lemma \ref{degreetrick} applied to the image of $a$ in $K[x_1, \ldots, x_n]$ (with $F=K$) resp. $k[x_1, \ldots, x_n]$ (with $F=k$), there exists an $m$ such that the images of $x_n$ in $R\otimes K$, resp. $R\otimes k$ will be integral over $R'\otimes K$, resp. $R'\otimes k$. As $x_i = z_i - x_n^{(Nm)^i}$, the other $x_i$ will have the same property, which is to say, $\Spec R \to \Spec R'$ is fiberwise finite over $S$.

We check that it is possible to apply the induction assumption to $X'=\Spec R'$ and $z_1, \ldots, z_d \in R'$. Since $R'$ is a subring of $R$ and $R$ is torsion-free, $R'$ is torsion-free as well, hence flat over $V$. As $R'\otimes_V K\to R\otimes_V K$ is finite and injective, we have $\dim X'_\eta = \dim X_\eta$. Since $R'$ is flat over $V$, we have $\dim X'_s \leq \dim X'_\eta$, so $\dim X'_s\leq d$ as well. Finally, $R'$ is generated as a $V$-algebra by $n-1$ elements with $z_1, \ldots, z_d$ among them. 

By the induction assumption applied to $X'=\Spec R'$ and $z_1, \ldots, z_d \in R$, there exists a fiberwise finite map $f'=(f_1, \ldots, f_d):\Spec R' \to \AA^d_S$ with $f_i = z_i + y'_i$, where the $y'_i$ belong to the subring of $R'$ generated by $N$-th powers of elements of $R'$. As the composition of fiberwise finite maps is clearly fiberwise finite, the composition $f=(f_1, \ldots, f_n):X=\Spec R\to \AA^d_S$ is fiberwise finite (i). We have $f_i = x_i + y_i$, $y_i = y'_i + (x_n^{N^{i-1}m^i})^N$, so (ii) is satisfied as well.
\end{proof}

It would be interesting to have a generalization of this result to a general noetherian local base ring $V$.

\subsection{\texorpdfstring{$\eta$-\'etale maps}{eta-\'etale maps}}
We now assume that $\chara k = p > 0$. Let $f:X\to Y$ be a map of $S$-schemes of finite type.

\begin{definition}
We call $f$ \emph{$\eta$-\'etale at a point $x\in X_s$} if there is an open neighborhood $U$ of $x$ in $X$ such that $f_\eta : U_\eta\to Y_\eta$ is \'etale. We call $f$ \emph{$\eta$-\'etale} if it is $\eta$-\'etale at all points $x\in X_s$, or equivalently, if there is an open neighborhood $U$ of $X_s$ in $X$ such that $f_\eta:U_\eta\to Y_\eta$ is \'etale.
\end{definition}

We warn the reader not to confuse ``$f$ is $\eta$-\'etale'' with ``$f_\eta$ is \'etale'' (the latter is a stronger condition).

\begin{lemma}\label{condpreet}
Consider the following properties:
\begin{enumerate}[(i)]
	\item $f$ is $\eta$-\'etale,
	\item there exists an $n\geq 0$ such that $(p^n \Omega^1_{X/Y})|_{X_s} = 0$ (pull-back as an abelian sheaf),
	\item there exists an $n\geq 0$ such that $(p^n \Omega^1_{X/Y})\otimes_V k =0$.
\end{enumerate}
Then $(i)\Rightarrow (ii) \Leftrightarrow (iii)$, and the three properties are equivalent if $X_\eta$ and $Y_\eta$ are smooth of the same relative dimension $d$ over $S$. 
\end{lemma}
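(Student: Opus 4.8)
The plan is to study the coherent sheaf $\mathcal{G}=p^n\Omega^1_{X/Y}$ on the noetherian scheme $X$ (coherent since $f$ is of finite type between noetherian schemes) and to reduce each of the three conditions to a statement about the vanishing of the stalks $\mathcal{G}_x$ for $x\in X_s$. For the equivalence $(ii)\Leftrightarrow(iii)$, first note that the pull-back $\mathcal{G}|_{X_s}$ as an abelian sheaf is $i^{-1}\mathcal{G}$, where $i:X_s\hookrightarrow X$, and its stalk at $x\in X_s$ is $\mathcal{G}_x$; thus $(ii)$ (for a fixed $n$) holds iff $\mathcal{G}_x=0$ for all $x\in X_s$. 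On the other hand $\mathcal{G}\otimes_V k=\mathcal{G}/\mm\mathcal{G}$ has stalk $\mathcal{G}_x/\mm\mathcal{G}_x$. Since $\chara k=p$, we have $p\in\mm$, so the image of $\mm$ lies in the maximal ideal of $\Oo_{X,x}$ for $x\in X_s$; as $\mathcal{G}_x$ is finitely generated, Nakayama's lemma gives $\mathcal{G}_x/\mm\mathcal{G}_x=0\Leftrightarrow\mathcal{G}_x=0$, while for $x\notin X_s$ a uniformizer is invertible and $\mathcal{G}_x/\mm\mathcal{G}_x=0$ automatically. Hence $(iii)$ is likewise equivalent to $\mathcal{G}_x=0$ for all $x\in X_s$, and $(ii)\Leftrightarrow(iii)$ follows.

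For $(i)\Rightarrow(ii)$, I would use that $\eta$-\'etaleness furnishes an open $U\supseteq X_s$ with $f_\eta\colon U_\eta\to Y_\eta$ \'etale, so that $\Omega^1_{U/Y}\otimes_V K=\Omega^1_{U_\eta/Y_\eta}=0$. Therefore $\Omega^1_{U/Y}$ is a coherent sheaf supported on the closed fibre $U_s=V(\pi)$, hence killed by a power of $\pi$; by quasi-compactness of $U$ one power works uniformly, i.e. $\pi^N\Omega^1_{U/Y}=0$ for some $N$. Writing $p=\pi^e u$ with $u$ a unit and $e\geq1$, for $n$ with $en\geq N$ we get $p^n\Omega^1_{U/Y}=0$. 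Since differentials commute with restriction to the open $U$ and $X_s\subseteq U$, this gives $(p^n\Omega^1_{X/Y})|_{X_s}=0$, which is $(ii)$.

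The substantial implication is $(ii)\Rightarrow(i)$ under the hypothesis that $X_\eta$ and $Y_\eta$ are smooth over $\eta$ of the same relative dimension $d$. By the equivalence above, $(ii)$ says $p^n\Omega^1_{X/Y}$ vanishes on an open $U\supseteq X_s$; since $p$ is invertible on $U_\eta$, this forces $\Omega^1_{U_\eta/Y_\eta}=0$, so $f_\eta|_{U_\eta}$ is unramified. It then remains to upgrade unramified to \'etale, and this is exactly where the dimension hypothesis is used. I would invoke the cotangent exact sequence
\[ f_\eta^*\,\Omega^1_{Y_\eta/\eta}\xrightarrow{\ \alpha\ }\Omega^1_{U_\eta/\eta}\to\Omega^1_{U_\eta/Y_\eta}\to 0, \]
in which smoothness makes the first two terms locally free of rank $d$ and the third vanish; thus $\alpha$ is a surjection of locally free sheaves of equal rank $d$, its kernel is locally free of rank $0$, and $\alpha$ is an isomorphism. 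By the standard criterion for a morphism between schemes smooth over a base, or equivalently by miracle flatness (regular target $Y_\eta$, Cohen--Macaulay source $U_\eta$, fibres of dimension $d-d=0$) together with unramifiedness, $f_\eta|_{U_\eta}$ is \'etale, so $f$ is $\eta$-\'etale.

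I expect the last step to be the main obstacle, or rather the only delicate point: the equality of relative dimensions is essential, as it is precisely what turns the surjection $\alpha$ into an isomorphism (equivalently, what makes the fibre dimension zero in miracle flatness). Without it an unramified morphism between smooth schemes need not be flat, and $(ii)$ would fail to recover $(i)$. By contrast, the equivalence $(ii)\Leftrightarrow(iii)$ and the implication $(i)\Rightarrow(ii)$ use only Nakayama's lemma and the coherence of $\Omega^1_{X/Y}$, together with the hypothesis $\chara k=p>0$ that places $p$ inside $\mm$.
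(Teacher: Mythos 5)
Your proof is correct and follows essentially the same route as the paper: Nakayama's lemma for $(ii)\Leftrightarrow(iii)$, the fact that a coherent sheaf vanishing on $D(\pi)$ is killed by a power of $\pi$ (hence of $p$) for $(i)\Rightarrow(ii)$, and passage to an open $U\supseteq X_s$ where $\Omega^1_{U_\eta/Y_\eta}=0$ for the converse. The only difference is that you spell out the final step (unramified plus equal relative dimension implies \'etale, via the conormal sequence and miracle flatness), which the paper merely asserts; this is a welcome elaboration, not a change of method.
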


\begin{proof}
The equivalence of $(ii)$ and $(iii)$ follows from Nakayama's lemma.

Suppose that $f$ is $\eta$-\'etale, and let $U\subseteq X$ be an open subset containing $X_s$ such that $f|_{U_\eta}$ is \'etale. In particular, $f|_{U_\eta}$ is unramified, hence $\Omega^1_{X/Y}|_{U_\eta} = 0$.

Recall that if $\Ff$ is a coherent sheaf on a noetherian scheme $U$ and $a \in \Gamma(U, \Oo_U)$, then $\Ff|_{D(a)} = 0$ if and only if $a^n \Ff = 0$ for some $n\geq 0$. Applying this to $\Ff = \Omega^1_{X/Y}|_U$ and $a=p$ (noting that $U_\eta = D(p)$), we get that $(p^n \Omega^1_{X/Y})|_U = 0$, hence in particular $(p^n \Omega^1_{X/Y})|_{X_s} = 0$. 

Suppose now that $(p^n \Omega^1_{X/Y})|_{X_s} = 0$ for some $n\geq 0$. By Nakayama's lemma, $(p^n \Omega^1_{X/Y})_x = 0$ for every $x\in X_s$, hence there is an open subset $U$ containing $X_s$ such that $(p^n \Omega^1_{X/Y})|_U = 0$. As $p$ is invertible on $U_\eta$, we get that $\Omega^1_{X/Y}|_{U_\eta} = 0$, that is, $f|_{U_\eta}$ is unramified. If $X_\eta$ and $Y_\eta$ are smooth of the same dimension over $S$, this is enough to guarantee that $f|_{U_\eta}$ is \'etale. 
\end{proof}

\begin{lemma}\label{goodW}
Suppose that $f$ is closed and $\eta$-\'etale. There exists an open neighborhood $W$ of $Y_s$ in $Y$ such that $f: f^{-1}(W)_\eta \to W_\eta$ is \'etale.
\end{lemma}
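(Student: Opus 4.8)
The plan is to use the closedness of $f$ to transfer the ``bad'' locus of non-\'etaleness from $X$ down to $Y$, where removing it produces the desired $W$. First I would invoke the reformulation of $\eta$-\'etaleness from the definition: since $f$ is $\eta$-\'etale, there is an open neighborhood $U$ of $X_s$ in $X$ such that $f_\eta : U_\eta \to Y_\eta$ is \'etale. Set $Z = X \setminus U$, a closed subset of $X$ disjoint from $X_s$.

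The key observation is that the image $f(Z)$ stays away from the closed fiber of $Y$. Since $f$ is closed, $f(Z)$ is closed in $Y$. To see that $f(Z) \cap Y_s = \emptyset$, I would use that $f$ is a morphism of $S$-schemes, so that $f^{-1}(Y_s) = X_s$ (both are the fiber over $s$ of the structure morphism to $S$); hence any point of $Z$ mapping into $Y_s$ would itself lie in $X_s$, contradicting $Z \cap X_s = \emptyset$.

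Now define $W = Y \setminus f(Z)$, which is an open neighborhood of $Y_s$ by the previous step. Then $f^{-1}(W) \subseteq U$, because $x \in f^{-1}(W)$ forces $f(x) \notin f(Z)$ and hence $x \notin Z$. Since $f^{-1}(W)_\eta$ is an open subscheme of $U_\eta$ on which $f_\eta|_{U_\eta}$ is \'etale, the restriction $f : f^{-1}(W)_\eta \to Y_\eta$ is \'etale; as its image lies in $W_\eta$ and \'etaleness is local on the target, $f : f^{-1}(W)_\eta \to W_\eta$ is \'etale, which is exactly the assertion.

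I do not expect a genuine obstacle. The only points requiring a moment's care are the set-theoretic identity $f^{-1}(Y_s) = X_s$, which is immediate from $f$ being an $S$-morphism, and the passage from ``\'etale onto $Y_\eta$'' to ``\'etale onto $W_\eta$'', which is just locality of \'etaleness on the base.
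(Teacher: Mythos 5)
Your proof is correct and takes essentially the same route as the paper: pick a closed subset $Z\subseteq X$ disjoint from $X_s$ that captures the failure of \'etaleness, use closedness of $f$ to see $f(Z)$ is closed and misses $Y_s$, and set $W = Y\setminus f(Z)$. The only (harmless) difference is that the paper takes $Z$ to be the non-\'etale locus of $f_\eta$, whose closedness in $X$ then still rests on the neighborhood $U$, whereas your choice $Z = X\setminus U$ is closed by construction, which is if anything slightly cleaner.
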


\begin{proof}
Let $Z \subseteq X_\eta$ be the locus where $f_\eta$ is not \'etale. Then $Z$ is closed in $X$. Since $f$ is a closed, $f(Z)$ is closed in $Y$, and of course $f(Z)\cap Y_s=\emptyset$. Take $W = Y \setminus f(Z)$. 
\end{proof}

We now consider the case $Y = \AA^d_S$.

\begin{lemma} \label{preetmod}
Suppose that $f':X\to \AA^d_S$ is such that $(p^n \Omega^1_{f'})|_{X_s} = 0$ and that $y_1, \ldots, y_d\in \Gamma(X, \Oo_X)$ are polynomials in $p^{n+1}$-powers of elements of $\Gamma(X, \Oo_X)$. If $f = f'+(y_1, \ldots, y_d)$, then $(p^n \Omega^1_{f})|_{X_s} = 0$ as well.
\end{lemma}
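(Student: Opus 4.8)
The plan is to realize both $\Omega^1_{f'}$ and $\Omega^1_f$ as quotients of the single module $\Omega^1_{X/S}$, so that the statement becomes a comparison of two submodules that differ only by elements of $p^{n+1}\Omega^1_{X/S}$. Writing $f'_i, f_i \in \Gamma(X,\Oo_X)$ for the components of $f'$ and of $f = f' + (y_1,\dots,y_d)$, the cotangent exact sequence for $X\xrightarrow{f'}\AA^d_S\to S$ identifies $\Omega^1_{f'}$ with the cokernel of the map $\Oo_X^{\oplus d}\to\Omega^1_{X/S}$ sending the $i$-th basis vector to $df'_i$, and likewise $\Omega^1_f$ is the cokernel of the map sending it to $df_i$. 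Setting $M=\Omega^1_{X/S}$, $N'=\sum_i\Oo_X\,df'_i$ and $N=\sum_i\Oo_X\,df_i$, we have $\Omega^1_{f'}=M/N'$ and $\Omega^1_f=M/N$, and everything reduces to relating $N$ and $N'$ inside $M$.

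First I would record the effect of the hypothesis on the $y_i$. Since each $y_i$ lies in the subring generated by $p^{n+1}$-th powers of elements of $\Oo_X$, its constant coefficients are annihilated by $d$, and the Leibniz rule together with $d(g^{p^{n+1}})=p^{n+1}g^{p^{n+1}-1}\,dg$ shows that $dy_i\in p^{n+1}\Omega^1_{X/S}=p^{n+1}M$. Hence $df_i=df'_i+dy_i\equiv df'_i \pmod{p^{n+1}M}$, so that $N'\subseteq N+p^{n+1}M$. Next I would upgrade the hypothesis exactly as in the proof of Lemma~\ref{condpreet}: from $(p^n\Omega^1_{f'})|_{X_s}=0$ and Nakayama's lemma one obtains $(p^n\Omega^1_{f'})_x=0$, i.e. $p^nM_x\subseteq N'_x$, for every $x\in X_s$. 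Combining this with the previous inclusion at such an $x$ gives $p^nM_x\subseteq N'_x\subseteq N_x+p^{n+1}M_x=N_x+p\cdot(p^nM_x)$.

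The last and crucial step is to remove the error term. Let $P$ be the image of $p^nM_x$ in $(M/N)_x=(\Omega^1_f)_x$; the inclusion just obtained says precisely that $P=pP$. Because $x\in X_s$ we have $p\in\mm_x$, the maximal ideal of the local ring $\Oo_{X,x}$, and $P$ is finitely generated, so Nakayama's lemma forces $P=0$, that is $(p^n\Omega^1_f)_x=0$. As this holds for every $x\in X_s$ and $p^n\Omega^1_f$ is coherent, its stalks vanish along $X_s$ and therefore $(p^n\Omega^1_f)|_{X_s}=0$.

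I expect this final Nakayama step to be the main, and genuinely characteristic-sensitive, point. The naive passage from $p^nM\subseteq N+p^{n+1}M$ to $p^nM\subseteq N$ is false over $\AA^d_S$ globally, and in particular over the generic fibre where $p$ is a unit; it succeeds only because we argue along the closed fibre $X_s$, where $p$ lies in the maximal ideal and hence in the Jacobson radical, so that Nakayama applies. This is the same mechanism that confines the whole argument to mixed characteristic.
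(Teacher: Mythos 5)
Your proof is correct, and it rests on the same key observation as the paper's: because the $y_i$ are polynomials in $p^{n+1}$-st powers, $dy_i\in p^{n+1}\Omega^1_{X/S}$, so the two presentations of $\Omega^1_{f'}$ and $\Omega^1_f$ as quotients of $M=\Omega^1_{X/S}$ agree modulo $p^{n+1}M$. The two arguments finish differently, however. The paper never localizes and never invokes Nakayama: it base-changes both presentations to $S_n=\Spec V/p^{n+1}V$, obtaining a global isomorphism $\Omega^1_f/p^{n+1}\isom\Omega^1_{f'}/p^{n+1}$, and then observes that the hypothesis and the conclusion are conditions on these reductions alone, since with $p=u\pi^e$ ($u$ a unit, $e\geq 1$) the vanishing $(p^n\Omega^1_{f'})|_{X_s}=0$, i.e. $p^n\Omega^1_{f'}=\pi\cdot p^n\Omega^1_{f'}$, is equivalent upon iterating to $p^n\Omega^1_{f'}=p^{n+1}\Omega^1_{f'}$, i.e. to $p^n\cdot\bigl(\Omega^1_{f'}/p^{n+1}\bigr)=0$; this transfers to $f$ through the isomorphism by pure formality, with no finiteness hypotheses at all. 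You instead work stalk by stalk at $x\in X_s$ and apply Nakayama twice: once to upgrade the hypothesis to $(p^n\Omega^1_{f'})_x=0$, i.e. $p^nM_x\subseteq N'_x$ (exactly as in the proof of Lemma~\ref{condpreet}), and once to kill the module $P=pP$. Both applications require finitely generated stalks, i.e. coherence of $\Omega^1_{X/S}$, which is available since the section works with schemes of finite type over $S$; so your argument is complete in context, just marginally less general than the paper's. One correction to your closing interpretation: Nakayama is not the essential characteristic-sensitive mechanism, since the paper's proof dispenses with it altogether. What both proofs genuinely need is only that $p$ is not a unit along the closed fibre (equivalently $p\in\mm_x$ for every $x\in X_s$, equivalently $p=u\pi^e$ with $e\geq 1$); that, and nothing else, is where mixed characteristic enters---as your own remark about the failure over the generic fibre in fact suggests.
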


\begin{proof}
Let $S_n = \Spec V/p^{n+1} V$, $X_n = X\times_S S_n$. The presentations
\[ \Oo^d_X \xto{df'_i} \Omega^1_{X/S} \to \Omega^1_{f'}\to 0,\quad \Oo^d_X \xto{df_i} \Omega^1_{X/S} \to \Omega^1_{f}\to 0  \]
give after base change to $S_n$ the short exact sequences
\[ \Oo^d_{X_n} \xto{df'_i} \Omega^1_{X_n/S_n} \to \Omega^1_{f'}/p^{n+1} \to 0,  \quad \Oo^d_{X_n} \xto{df_i} \Omega^1_{X_n/S_n} \to \Omega^1_{f}/p^{n+1}\to 0.  \]
By the assumption on the $y_i$, we have $dy_i \in p^{n+1}\Omega^1_{X/S}$, therefore the two maps $\Oo^d_{X_n} \to \Omega^1_{X_n/S_n}$ above are the same. It follows that $\Omega^1_{f}/p^{n+1} \isom \Omega^1_{f'}/p^{n+1}$. The assumption that $(p^n \Omega^1_{f'})|_{X_s} = 0$ means that $p^n\Omega^1_{f'}/\pi p^n\Omega^1_{f'}=0$ for a uniformizer $\pi$ of $V$. As $p = u\pi^e$ for a unit $u\in V$ and $e\geq 1$ an integer, we have $p^n \Omega^1_{f'}/p^{n+1} \Omega^1_{f'} = 0$. Since $\Omega^1_{f}/p^{n+1} \isom \Omega^1_{f'}/p^{n+1}$, the same holds for $\Omega^1_f$. We thus have $(p^{n}\Omega^1_f)|_{X_n} = 0$, hence $(p^n \Omega^1_{f})|_{X_s} = 0$.
\end{proof}

\begin{proposition}\label{preet-normal}
Assume that $X$ affine and flat over $S$, that $X_\eta$ is smooth of relative dimension $d$ over $S$, and that $f':X\to \AA^d_S$ is $\eta$-\'etale. There exists an $f:X\to \AA^d_S$ which is $\eta$-\'etale and fiberwise finite over $S$.
\end{proposition}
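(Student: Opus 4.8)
The plan is to keep $f'$ as a starting point and perturb it, replacing $f'$ by a map $f=f'+(y_1,\ldots,y_d)$ whose components differ from those of $f'$ by auxiliary functions $y_i$ chosen so that (a) the perturbation forces $f$ to become fiberwise finite, and (b) the perturbation is \emph{invisible} to the differential criterion for $\eta$-\'etaleness. Requirement (a) is exactly what the relative Noether normalization of Proposition~\ref{nagatatrick} provides, while requirement (b) is guaranteed by Lemma~\ref{preetmod}, which says that adding functions built out of sufficiently high $p$-power-th powers does not change $\Omega^1$ modulo $p^{n+1}$. The whole argument is then a matter of synchronizing the exponent in Nagata's trick with the $p$-adic exponent measuring how $\eta$-\'etale $f'$ is.

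First I would extract the quantitative content of the hypothesis that $f'$ is $\eta$-\'etale. By the implication $(i)\Rightarrow(ii)$ of Lemma~\ref{condpreet}, there is an integer $n\geq 0$ with $(p^n\Omega^1_{f'})|_{X_s}=0$; this $n$ records the amount of $p$-divisibility the perturbation must carry. Next I would apply Proposition~\ref{nagatatrick}. Its hypotheses hold: $X$ is affine, flat, and of finite type over $S$, and since $X_\eta$ is smooth of relative dimension $d$ we have $\dim X_\eta=d$, while flatness over the dvr gives $\dim X_s\leq \dim X_\eta=d$ (exactly as in the proof of Proposition~\ref{nagatatrick}), so $\dim(X/S)\leq d$. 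Writing $f'=(f'_1,\ldots,f'_d)$ with $f'_i\in\Gamma(X,\Oo_X)$, I invoke the proposition with the input functions $x_i=f'_i$ and with the decisive choice of exponent $N=p^{n+1}$. This yields $y_1,\ldots,y_d$ for which $f:=(f'_1+y_1,\ldots,f'_d+y_d)\colon X\to\AA^d_S$ is fiberwise finite relatively to $S$, and for which each $y_i$ lies in the subring of $\Gamma(X,\Oo_X)$ generated by $p^{n+1}$-th powers.

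It then remains only to verify that $f$ is still $\eta$-\'etale. Since each $y_i$ is a polynomial in $p^{n+1}$-th powers of elements of $\Gamma(X,\Oo_X)$, Lemma~\ref{preetmod} applies directly and gives $(p^n\Omega^1_{f})|_{X_s}=0$; that is, $f$ satisfies condition $(ii)$ of Lemma~\ref{condpreet}. Both $X_\eta$ and $(\AA^d_S)_\eta$ are smooth of relative dimension $d$ over $S$, so the equivalence of $(i)$--$(iii)$ in Lemma~\ref{condpreet} under that smoothness assumption lets me run the reverse implication $(ii)\Rightarrow(i)$ and conclude that $f$ is $\eta$-\'etale. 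Hence $f$ is simultaneously $\eta$-\'etale and fiberwise finite over $S$, which is what we want.

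The step I expect to be the genuine crux is the matching of the two exponents. The functions coming out of Noether normalization are completely uncontrolled a priori, so without the refinement of Proposition~\ref{nagatatrick}(ii) — the ability to force the auxiliary exponents in Nagata's argument to be divisible by the prescribed power $p^{n+1}$ — the perturbation would in general destroy the differential criterion and hence the $\eta$-\'etaleness. This is precisely the point at which mixed characteristic is indispensable: the derivatives of the perturbation are absorbed by the factor $p^{n+1}$, and in equal characteristic zero there is no prime $p$ available to play this role.
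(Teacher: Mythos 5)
Your proof is correct and follows essentially the same route as the paper's: extract $n$ with $(p^n\Omega^1_{f'})|_{X_s}=0$ via Lemma~\ref{condpreet}, apply Proposition~\ref{nagatatrick} to the components of $f'$ with $N=p^{n+1}$, and then use Lemma~\ref{preetmod} together with the converse direction of Lemma~\ref{condpreet} (valid since $X_\eta$ and $\AA^d_\eta$ are both smooth of relative dimension $d$) to see that the perturbed map is still $\eta$-\'etale. The only difference is that you spell out the verification of the hypotheses of Proposition~\ref{nagatatrick} and the role of the exponent matching, which the paper leaves implicit.
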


\begin{proof}
By Lemma \ref{condpreet}, there exists an $n$ such that $(p^n \Omega^1_{f'})|_{X_s} = 0$. Apply Proposition \ref{nagatatrick} to $x_i = f'_i$ and $N = p^{n+1}$, obtaining a fiberwise finite $f:X\to \AA^d_S$ which differs from $f'$ by some polynomials in $p^{n+1}$-powers. Then $f$ is $\eta$-\'etale by Lemma \ref{preetmod} and Lemma \ref{condpreet}.
\end{proof}

\section{\texorpdfstring{Existence of $K(\pi, 1)$-neighborhoods}{Existence of K(pi, 1)-neighborhoods}}
\label{s:main}

\begin{theorem} \label{main}
Assume that $\chara k = p > 0$. Let $(X, \Mm_X)$ be a log smooth log scheme over $(S, \Mm_S)$ such that $X_\eta$ is smooth over $\eta$, and let $\bar x\to X$ be a geometric point. There exists an \'etale neighborhood $U$ of $\bar x$ such that $U_\eta$ is a $K(\pi, 1)$.
\end{theorem}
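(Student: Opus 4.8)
The plan is to follow the strategy sketched in \ref{intro:main}: on a suitable étale neighborhood of $\bar x$ produce a map to a smooth $S$-scheme which is finite étale on generic fibres, and then pull back a $K(\pi,1)$-neighborhood supplied by Faltings' theorem \ref{faltings}. First I would dispose of the case where the image $x$ of $\bar x$ lies in $X_\eta$: then $X_\eta$ is smooth over the characteristic-zero field $K$, so Artin's theorem \ref{artin} gives a Zariski-open $K(\pi,1)$-neighborhood of $x$ inside $X_\eta$, which is already an open of $X$ with the required property. So assume $x\in X_s$. Since the conclusion is étale-local near $\bar x$, I would use the local structure theory of \ref{our-setting}--\ref{our2} to replace $X$ by an affine étale neighborhood of $\bar x$ admitting a strict étale chart $g\colon X\to \AA_{P,\rho,\pi}$, with $X$ flat over $S$ (as the local model is), $X_\eta$ smooth of some relative dimension $d$, and $\bar P/\rho$ free.

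The crux is the construction of an $\eta$-étale map $f'\colon X\to \AA^d_S$; this is where log smoothness is genuinely used, and where the counterexample of Section~\ref{s:Lodh-counterex} must obstruct any naive argument for non-log-smooth $X$. Using that $\bar P/\rho$ is free, I would take $r$ coordinates $t_1,\dots,t_r$ from generators of the free monoid $\bar P/\rho$ together with $d-r$ coordinates $u_1,\dots,u_{d-r}$ from a basis of the free part of the unit group of $P/\rho$; pulling these back along $g$ yields a map $f'=(t_1,\dots,t_r,u_1,\dots,u_{d-r})\colon X\to \AA^d_S$. On the local model $(\AA_{P,\rho,\pi})_\eta\isom \Spec K[P/\rho]$ this map is finite étale onto the open subscheme $\AA^r\times\mathbb{G}_m^{d-r}\subseteq \AA^d_K$ (the residual torsion contributes a factor $\Spec K[T]$, finite étale over $K$ because $\chara K=0$); composing with the étale $g_\eta$ shows $f'_\eta$ is étale, hence $f'$ is $\eta$-étale. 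The point to verify carefully is that the ordinary differentials $dt_i, du_j$ span $\Omega^1_{X_\eta/\eta}$ even along the boundary $\{t_i=0\}$; this holds because on the smooth $X_\eta$ the $t_i,u_j$ are honest étale coordinates, the degeneration at $t_i=0$ affecting $d\log t_i$ rather than $dt_i$.

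The map $f'$ is typically not finite, since $g$ is only étale, so next I would apply Proposition~\ref{preet-normal} to replace $f'$ by a map $f\colon X\to \AA^d_S$ that is fiberwise finite relatively to $S$ and still $\eta$-étale (the modification only adds high $p$-power terms, so $\eta$-étaleness survives by Lemma~\ref{preetmod} and Lemma~\ref{condpreet}). A fiberwise finite map is quasi-finite but need not be closed, and I expect this passage from fiberwise finite to genuinely finite to be the main technical obstacle. I would resolve it using the étale-local structure of quasi-finite separated morphisms (Zariski's Main Theorem): after a further étale base change $W'\to \AA^d_S$ along a neighborhood of $f(\bar x)$, the fibre product $X\times_{\AA^d_S}W'$ splits as $V\sqcup V'$ with $V\to W'$ finite and the geometric point above $\bar x$ lying in $V$. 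Since $W'\to \AA^d_S$ is étale and $\AA^d_S$ is smooth over $S$, the scheme $W'$ is smooth over $S$, while $V\to X$ is étale, so $V$ is an étale neighborhood of $\bar x$ on which $V\to W'$ is finite and $\eta$-étale.

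Finally I would run the endgame. Lemma~\ref{goodW} applied to the finite, $\eta$-étale map $V\to W'$ produces an open $W''\subseteq W'$ containing $W'_s$ over which $V\to W''$ is finite étale on generic fibres; Faltings' theorem \ref{faltings}, applied to the smooth $S$-scheme $W''$ at the point $f(x)\in W'_s$, then furnishes a connected open $W\ni f(x)$ with $W_\eta$ a $K(\pi,1)$. Setting $U=V\times_{W''}W$ gives an étale neighborhood of $\bar x$ for which $U_\eta\to W_\eta$ is finite étale and surjective (as $W_\eta$ is connected and $U_\eta\neq\emptyset$ by flatness), so $U_\eta$ is a $K(\pi,1)$ by the finite étale descent of Proposition~\ref{kpi1prop}(b). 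Composing all the intermediate localizations exhibits the required étale neighborhood of $\bar x$ in the original $X$.
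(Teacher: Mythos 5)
Your proposal is correct and follows essentially the same route as the paper: the same monoid-theoretic construction of an $\eta$-\'etale map $f'\colon X\to\AA^d_S$ out of the chart (using freeness of $\bar{P/\rho}$), then Proposition \ref{preet-normal}, \'etale localization of the resulting quasi-finite map, Lemma \ref{goodW}, and the endgame via Theorem \ref{faltings} and Proposition \ref{kpi1prop}(b). The only (harmless) divergence is that you check \'etaleness of $f'_\eta$ directly on an identification $(\AA_{P,\rho,\pi})_\eta\isom\Spec K[P/\rho]$, which is only canonical after base change to $\bar K$ (the paper sidesteps this twist by a $\mathbb{G}_m$-equivariance argument over $\AA_\ZZ$); since \'etaleness descends along $\bar K/K$, your check still goes through.
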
 

\noindent \emph{Proof.}

\subsubsection{}
If $\bar x$ is contained in $X_\eta$, the existence of such a neighborhood follows from Theorem \ref{artin}. We are therefore going to restrict ourselves to the case where $\bar x$ is a geometric point of the closed fiber $X_s$. The question being \'etale local around $X$, we are allowed to shrink $X$ around $\bar x$ if needed.

\subsubsection{}
Let $\pi$ be a uniformizer of $V$, inducing a chart $(S, \Mm_S)\to \AA_\NN$. By the discussion of \ref{our-setting}---\ref{our2}, in an \'etale neighborhood of $\bar x$ there exists a fine monoid $P$, an element $\rho \in P$ such that $\bar{P/\rho}$ is a free monoid, and an \'etale map 
\[ g: X\to \AA_{P, \rho, \pi} = \Spec \left(P\to \frac{V[P]}{(\pi - \rho)}\right) \]
over $S$. 

We can replace $X$ by an \'etale neighborhood of $\bar x$ for which the above data exist. Shrinking $X$ further, we can also assume that $X$ is affine. 

\subsubsection{}
Let us denote by $P[\rho^{-1}]$ the submonoid of $P^{gp}$ generated by $P$ and $\rho^{-1}$, and by $P/\rho$ the quotient of $P[\rho^{-1}]$ by the subgroup generated by $\rho$. Since $\overline{P[\rho^{-1}]}=\bar{P/\rho}$ is free, there is an isomorphism $P[\rho^{-1}] \isom P[\rho^{-1}]^* \oplus \overline{P[\rho^{-1}]}$. Picking an isomorphism $\overline{P[\rho^{-1}]}\isom \NN^b$ and a decomposition of $P[\rho^{-1}]^*$, we can write $P[\rho^{-1}]\isom T \oplus\ZZ\oplus \ZZ^a\oplus \NN^b$ where $T$ is a finite abelian group and $\rho$ corresponds to an element of the $\ZZ$ summand. Dividing by $\rho$, we obtain an isomorphism $P/\rho \isom T\oplus\ZZ^a\oplus \NN^b$. Let $d=a+b$, and let $\chi_0 : \NN^{d}=\NN^a\oplus\NN^b \to T\oplus \ZZ^a\oplus \NN^b \isom P/\rho$ be the map implied by the notation. As the source of $\chi_0$ is free and $P\to P/\rho$ is surjective, we can choose a lift $\chi:\NN^d\to P$ of $\chi_0$:
\[
     \xymatrix{
     	        & &  \NN^d \ar[d]^{\chi_0} \ar@{.>}[lld]_\chi \\
     	P\ar@{->>}[rr] & &  P/\rho.
     } 
\]
Then $\chi$ induces a map $h:\AA_{P, \rho, \pi} \to \AA^d_S$ over $S$. 

\subsubsection{} I claim that $h_\eta$ is \'etale. Note first that $h_\eta$ is the pullback under $\pi: \eta \to \AA_\ZZ$ of the horizontal map in the diagram
\[
	\xymatrix{
		\AA_{P[\rho^{-1}]}  \ar[dr] \ar[rr]^{\AA_{\rho\oplus \chi}} & & \AA_\ZZ \times \AA_{\NN^d} \ar[dl] \\ 
		& \AA_\ZZ & 
	}
\]
We want to check that the horizontal map becomes \'etale after base change to $\QQ$. Since the base is $\AA_\ZZ = \mathbb{G}_m$ and the map is $\mathbb{G}_m$-equivariant, it suffices to check this on one fiber. If we set $\rho = 1$, the resulting map is none other than the map induced by 
\[  \NN^d =\NN^a\oplus\NN^b \hookrightarrow \ZZ^a\oplus\NN^b \hookrightarrow T\oplus \ZZ^a \oplus \NN^b \isom P/\rho = P[\rho^{-1}]/\rho,  \]
which is \'etale after adjoining $1/\# T$.

\subsubsection{}
Let $f'= h\circ g: X\to \AA^d_S$. This map is $\eta$-\'etale, therefore by Proposition \ref{preet-normal}, there exists a map $f : X\to \AA^d_S$ which is $\eta$-\'etale and fiberwise finite over $S$ (hence quasi-finite). 
%
%
Let $\bar y\to \AA^d_S$ be the image of $\bar x$ under $f$. 
As $f$ is quasi-finite, we can perform an \'etale localization at $\bar x$ and $\bar y$ which will make it finite. More precisely, we apply \cite[Th\'eor\`eme 18.12.1]{EGA_IV_4} (or \stacks{02LK}) and conclude that there exists a commutative diagram
\[
	\xymatrix{ 
		\bar x \ar[d] \ar[r]  & U' \ar[d]_{f} \ar[r] & X \ar[d]_f \\
		\bar y \ar[r]         & W' \ar[r]        & \AA^d_S
	}
\]
with $U'\to X'$ and $W'\to \AA^d_S$ \'etale and $f:U'\to W'$ finite. It follows that $f:U'\to W'$ is also $\eta$-\'etale.

\subsubsection{}
By Lemma \ref{goodW} applied to $f:U'\to W'$, we can shrink $W'$ around $\bar y$ (and shrink $U'$ accordingly to be the preimage of the new $W'$) so that $U'_\eta\to W'_\eta$ is finite \'etale.

\subsubsection{}
Since $W'$ is smooth over $S$, by Faltings' theorem (\ref{faltings}) there is an open neighborhood $W$ of $\bar y$ in $W'$ such that $W_\eta$ is a $K(\pi, 1)$. Let $U$ be the preimage of $W$ in $U'$ under $f:U'\to W'$. The induced map $f_\eta : U_\eta \to W_\eta$ is finite \'etale, hence $U_\eta$ is a $K(\pi, 1)$ as well by Proposition \ref{kpi1prop}(b).  \hfill $\qed$

\subsection{Remark}\label{relsmooth}
A reader familiar with the notion of a relatively smooth log structure (cf. \cite[Definition 3.6]{NakayamaOgus}, \cite{OgusDwork}) might appreciate the fact that the above proof applies to relatively smooth $X/S$ as well. Recall that we call $(X,\Ff)/(S, \Mm_S)$ \emph{relatively log smooth} if, \'etale locally on $X$, there exists a log smooth log structure $(X, \Mm)/(S, \Mm_S)$ and an inclusion $\Ff\subseteq \Mm$ as a finitely generated sheaf of faces, for which the stalks of $\Mm/\Ff$ are free monoids. We can then apply Theorem \ref{main} to $(X, \Mm)$ instead of $(X, \Ff)$. 

Important examples of relatively log smooth $X/S$ appear in the Gross--Siebert program in mirror symmetry \cite{GrossSiebert1,GrossSiebert2,GrossSiebert3} as so-called \emph{toric degenerations}. Degenerations of Calabi-Yau hypersurfaces in toric varieties are instances of such. For example, the \emph{Dwork families}
\[ X = \Proj V[x_0, \ldots, x_n]\Big/( (n+1)x_0\cdot\ldots\cdot x_n - \pi\cdot \sum_{i=0}^{n} x_i^{n+1} )  \]
(with the standard compactifying log structure) are relatively log smooth over $S$ if $n+1$ is invertible on $S$, but not log smooth for $n>2$ (\cite[Proposition 2.2]{OgusDwork}). 

\subsection{Obstacles in characteristic zero}
\label{counterex}

The need for the positive residue characteristic assumption in our proof of Theorem \ref{main} can be traced down to the application of Proposition \ref{preet-normal}: one can perform relative Noether normalization on an $\eta$-\'etale map $f':X\to \AA^d_S$ without sacrificing $\eta$-\'etaleness.  One might think that this is too crude and that one could replace that part with a Bertini-type argument. After all, we only need $\eta$-\'etaleness at one point! Unfortunately, this is bound to fail in characteristic zero even in the simplest example, that of a semistable curve:

\begin{proposition}
Let $X$ be an open subset of $\Spec V[x, y]/(xy - \pi) \subseteq \AA^2_S$ containing the point $P=(0,0)\in\AA^2_k$ and let $f:X\to \AA^1_S$ be an $S$-morphism. If $f_\eta : X_\eta\to \AA^1_\eta$ is \'etale, then $df$ is identically zero on one of the components of $X_s$. In particular, if $\chara k = 0$, then $f$ has to contract one of the components of $X_s$, hence is not quasi-finite.
\end{proposition}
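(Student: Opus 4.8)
\emph{Proof plan.} Write $R = V[x,y]/(xy-\pi)$, so that $X$ is an open subscheme of $\Spec R$ and $X_s$ is the union of the two lines $C_1 = \{x=0\}$ and $C_2 = \{y=0\}$ meeting at $P$. The module $\Omega^1_{X/S}$ is generated by $dx, dy$ subject to the single relation $y\,dx + x\,dy = 0$. Lift $f$ to a polynomial $F\in V[x,y]$ and set $g = xF_x - yF_y \in R$; this is independent of the lift, since changing $F$ by a multiple of $xy-\pi$ changes $xF_x - yF_y$ by a multiple of $xy-\pi$. On the generic fibre $X_\eta$ both $x$ and $y$ are invertible and $\omega = dx/x = -dy/y$ generates the free rank-one module $\Omega^1_{X_\eta/\eta}$; substituting $dx = x\omega$, $dy = -y\omega$ gives $df = g\,\omega$. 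Hence $f_\eta$ is \'etale if and only if $g$ is invertible on $X_\eta$, and this is the hypothesis we must exploit.

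The first step is to rephrase the conclusion in terms of $g$. Using the $V$-basis $\{x^i\}_{i\ge 0}\cup\{y^j\}_{j\ge 1}$ of $R$, write $g = \sum_{i\ge 0}\alpha_i x^i + \sum_{j\ge 1}\beta_j y^j$ with $\alpha_i,\beta_j\in V$. Reducing modulo $\pi$ gives $\bar g = x\bar F_x - y\bar F_y$ in $k[x,y]/(xy)$, in which $x\bar F_x$ becomes a polynomial in $x$ alone with no constant term and $y\bar F_y$ one in $y$ alone; thus $\bar g$ has no constant term, and its pure parts are $B(x):=\sum_{i\ge1}\bar\alpha_i x^i = x\bar F_x(x,0)$ and $A(y):=\sum_{j\ge1}\bar\beta_j y^j = -y\bar F_y(0,y)$. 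Since the differential of $f|_{C_1}\colon C_1\to\AA^1_k$ is $\bar F_y(0,y)\,dy$, the form $df$ vanishes on $C_1$ exactly when $A=0$, and symmetrically it vanishes on $C_2$ exactly when $B=0$. Thus the theorem reduces to the algebraic assertion: if $g$ is invertible on $X_\eta$, then $A=0$ or $B=0$; equivalently, the $\alpha_i$ ($i\ge1$) cannot contain a unit at the same time as the $\beta_j$.

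The heart of the argument — and the step I expect to be the main obstacle — is to show that if both $A\ne 0$ and $B\ne 0$, then $g$ has a zero on $X_\eta$ reducing to $P$. Passing to $R_\eta = R\otimes_V K = K[x,x^{-1}]$ via $y=\pi/x$, we have $g=\sum_n \gamma_n x^n$ with $\gamma_i=\alpha_i$ for $i\ge 0$ and $\gamma_{-j}=\pi^j\beta_j$ for $j\ge1$. The assumption $B\ne 0$ produces an index $i\ge1$ with $v(\gamma_i)=0$, and $A\ne 0$ an index $-j\le -1$ with $v(\gamma_{-j})=j$ (so the point $(-j,j)$ lies on the line $v=-n$), while $v(\gamma_0)\ge 1$. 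I will analyse the concave, piecewise-linear function $\mu(\lambda)=\min_n\bigl(v(\gamma_n)+n\lambda\bigr)$ on $[0,1]$, whose corners are exactly the valuations of the zeros of $g$ in $\bar K^*$. The two displayed points force the right-slope of $\mu$ at $\lambda=0$ to be $\ge 1$ and the left-slope at $\lambda=1$ to be $\le -1$; by concavity $\mu$ must have a corner at some $\lambda^*\in(0,1)$, i.e. $g$ has a root $x_0$ with $v(x_0)=\lambda^*\in(0,1)$. Then $y_0=\pi/x_0$ also has positive valuation, so the corresponding closed point of $(\Spec R)_\eta$ specializes to $P$; as $X$ is open and contains $P$, it contains this point, so the zero lies on $X_\eta$. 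This contradicts the invertibility of $g$. (In Newton-polygon language: a Laurent polynomial with a vertex of height $0$ at positive abscissa and a vertex on $v=-n$ at negative abscissa must have an edge of slope in $(-1,0)$.)

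Finally, once we know that $df$ vanishes on, say, $C_1$, the map $f|_{C_1}\colon C_1\to\AA^1_k$ of smooth $k$-curves has vanishing differential; when $\chara k=0$ this forces $f|_{C_1}$ to be constant, so $f$ contracts $C_1$ and is in particular not quasi-finite. The characteristic-zero hypothesis enters only here, to exclude inseparable maps (such as Frobenius) with vanishing differential but non-constant.
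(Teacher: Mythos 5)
The heart of your argument (the Newton-polygon step you flagged as the main obstacle) is in fact fine, but there is a genuine gap at the very first line: you cannot in general ``lift $f$ to a polynomial $F\in V[x,y]$''. A morphism $f:X\to \AA^1_S$ is just a global section of $\Oo_X$, and since $X$ is allowed to be a proper open subset of $\Spec R$, such a section need not come from $R$ at all. Concretely, take $X=D(x-1)$, which contains $P$, and $f=1/(x-1)$: this is a legitimate input to the proposition (it is even $\eta$-\'etale: in your notation $df=g\,\omega$ with $g=-x/(x-1)^2$, and $x$ is invertible on $X_\eta$ because $xy=\pi$), yet it is not the restriction of any element of $R$ to any neighborhood of $P$, since otherwise $x-1$ would be a unit of $R$. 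Everything downstream uses $g\in R$ essentially: the expansion of $g$ in the $V$-basis $\{x^i\}\cup\{y^j\}$, the vanishing of its constant term modulo $\pi$, and the identification of $g$ with a Laurent polynomial in $K[x,x^{-1}]$ all presuppose it. So, as written, your proof covers only the case $f\in R$ (e.g.\ $X=\Spec R$).

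The good news is that the rest is correct and the gap is repairable inside your framework. The Newton-polygon facts you invoke hold over any valued field (no completeness of $K$ is needed: extend $v$ to $\bar K$ and argue with elementary symmetric functions of the roots), the slope estimates at $\lambda=0,1$ do follow from $v(\gamma_0)\ge 1$ together with the two unit coefficients, and a root of valuation in $(0,1)$ does give a closed point of $(\Spec R)_\eta$ specializing to $P$, hence lying in the open set $X$. To repair the first step: reduce to $X=D(h)$ with $h\notin(x,y,\pi)$ (harmless, since a section of the line bundle $\Omega^1_{C_i\cap X}$ vanishing on a nonempty open subset of the integral curve $C_i\cap X$ vanishes identically); write $f=r/h^n$ with $r\in R$; observe that the derivation $D=x\partial_x-y\partial_y$ of $V[x,y]$ kills $xy-\pi$, hence descends to $R$ and to $R_h$, and satisfies $D(R)\subseteq(x,y,\pi)$; then run your argument on $r':=h^{n+1}Df=hDr-nrDh\in R$ instead of on $g$. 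The constant term of $r'$ lies in $\pi V$ (so $v(\gamma_0)\ge1$ still holds), and $r'$ vanishes on $X_\eta$ exactly where $Df$ does, so your Newton-polygon step applies to $r'$ verbatim. For comparison, the paper sidesteps all of this by a purely local dimension-theory argument: the non-\'etale locus $Z$ is contained in $X_s$ (by \'etaleness of $f_\eta$) and is cut out near $P$ by the two equations $xy-\pi$ and $xf_x-yf_y$ in the three-dimensional regular scheme $\AA^2_S$, both vanishing at $P$; hence $Z$ has a positive-dimensional component through $P$, which must be one of the two lines of $X_s$. Your route is more hands-on and, once patched, gives a self-contained elementary proof; the paper's is shorter and insensitive to the global form of $f$.
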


\begin{proof}
Let $Z$ be the support of $\Omega^1_{X/\AA^1_S}$. As $f_\eta$ is \'etale, $Z \subseteq X_s$. On the other hand, the short exact sequence
\[ \Oo_X^2 \xto{ \left[ \begin{array}{cc} f_x & y \\ f_y & x \end{array} \right] } \Oo_X^2 \xto{\left[\begin{array}{cc} dx & dy \end{array} \right] } \Omega^1_{X/\AA^1_S} \to 0   \]
shows that $Z$ is the set-theoretic intersection of two divisors in $\AA^2_S$ (given by the equations $xy=\pi$ and $xf_x-yf_y = 0$), each of them passing through $P$, for if $g\cdot \Omega^1_{X/\AA^1_S} = 0$, then 
\[ \left[\begin{array}{cc} f_x & y \\ f_y & x \end{array} \right] \cdot C = \left[ \begin{array}{cc} g & 0 \\ 0 & g \end{array} \right]   \]
for some matrix $C$, hence $g^2 = (xf_x - yf_y)\cdot \det(C) \in (xf_x-yf_y)$.
Since $\AA^2_S$ is regular, by the dimension theorem we know that each irreducible component of $Z$ passing through $P$ has to be of positive dimension. Therefore $Z$ has to contain one of the components of $X_s$.
\end{proof}

\section{A counterexample}
\label{s:Lodh-counterex}

The following is an example of an $X/S$ where $X$ is \emph{regular}, but for which $K(\pi, 1)$ neighborhoods do not exist. We use the notation of \ref{setup}. 

\begin{proposition}\label{Lodh-counterex}
Suppose the characteristic of $k$ is 0. Let $\pi$ be a uniformizer of $V$ and let $X\subseteq \mathbb{A}^3_S$ be given by the equation $xy = z^2 - \pi$. Let $P = (0, 0, 0)\in X_k$ and let $U$ be an open neighborhood of $P$ in $X$. Then $U_\eta$ is not a $K(\pi, 1)$. 
\end{proposition}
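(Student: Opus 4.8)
\emph{Proof proposal.} The plan is to exhibit, for a suitable prime $\ell$, a nonzero class in $H^2(U_\eta, \ZZ/\ell)$ that is annihilated by no finite \'etale cover, so that the characterization of $K(\pi,1)$'s in Proposition \ref{kpi1prop}(a) forbids $U_\eta$ from being one. Since the $K(\pi,1)$ property is insensitive to extension of the base field (Proposition \ref{kpi1prop}(c)), I may enlarge $K$. As $\chara k = 0$ the ring $V$ is equicharacteristic, so after reducing to a finitely generated model (to arrange $k \hookrightarrow \CC$) and using $\widehat V \cong k[[t]]$, I obtain an embedding $V \hookrightarrow \CC[[t]]$ with $\pi \mapsto t$, hence $K \hookrightarrow \overline{\CC((t))}$. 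It therefore suffices to disprove the $K(\pi,1)$ property after base change to $\overline{\CC((t))}$. This in turn is modeled, up to \'etale homotopy type, on the general fiber of the complex-analytic smoothing $\mathcal X = \{xy = z^2 - t\} \subseteq \CC^3 \times \Delta \to \Delta$ of the quadric cone $X_s = \{xy = z^2\}$, whose central fiber has an ordinary double point at $P = (0,0,0)$.

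For $t \neq 0$ small, $\mathcal X_t$ is a smooth affine quadric surface, diffeomorphic to $T^*S^2$ and hence simply connected with $H^2(\mathcal X_t, \ZZ/\ell) \cong \ZZ/\ell$; moreover $\mathcal X_t$ deformation retracts onto a vanishing sphere $\Sigma \cong S^2$ contained in an arbitrarily small ball $B$ about $P$, and the generator $\zeta$ of $H^2(\mathcal X_t, \ZZ/\ell)$ restricts to a generator of $H^2(\Sigma, \ZZ/\ell)$. The crux, and the only place the hypothesis $P \in U$ enters, is to confine $\Sigma$ inside $U$. Writing $U = X \setminus C$ with $C$ closed and $P \notin C$, the closed set $\mathcal C \subseteq \mathcal X$ obtained from $C$ misses $P$, so for $t$ and $B$ small enough $\Sigma \subseteq \mathcal X_t \cap B$ avoids $\mathcal C$ and thus lies in $\mathcal U_t := \mathcal X_t \setminus \mathcal C$, the model for the geometric generic fiber of $U$. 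Since $\Sigma \hookrightarrow \mathcal X_t$ is a homotopy equivalence that factors through $\mathcal U_t$, the restriction $\zeta_U \in H^2(\mathcal U_t, \ZZ/\ell)$ of $\zeta$ restricts further to a generator of $H^2(\Sigma, \ZZ/\ell)$; in particular $\zeta_U \neq 0$.

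It remains to see that $\zeta_U$ survives every finite \'etale cover. If $f : W \to \mathcal U_t$ is finite \'etale and surjective, then because $\Sigma$ is simply connected the cover is trivial over $\Sigma$, so $\Sigma$ lifts isomorphically to a subspace $\Sigma' \subseteq W$; consequently $(f^* \zeta_U)|_{\Sigma'} = \zeta_U|_\Sigma \neq 0$, and so $f^* \zeta_U \neq 0$. Thus $\mathcal U_t$ is not a $K(\pi,1)$, and by the chain of identifications above neither is $U_\eta$.

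The hard part is the single identification I have suppressed: that the geometric generic fiber of $U/S$ may be replaced, for the purpose of computing \'etale cohomology together with its pullback maps along finite \'etale covers, by a general fiber $\mathcal U_t$ of the smoothing, compatibly with the inclusion $U \subseteq X$. This is the nearby-cycles input; rigorously it rests on the local triviality of the non-proper family $\mathcal U \to \Delta^*$ (Thom--Mather isotopy for a Whitney stratification adapted to $\mathcal C$) together with Artin's comparison between \'etale and singular cohomology over $\CC$. Everything else is formal: once the topological model is in place, the simple-connectivity of $\mathcal X_t$ and of $\Sigma$ yield both $\zeta_U \neq 0$ and its survival under covers. (I would also note, though it is not needed for the statement, that $X$ itself is regular: the linear part of $xy - z^2 + \pi$ along $X_s$ has unit coefficient on $\pi$, so the equation lies outside $\mm^2$ at every point --- this is what makes the example a failure of $K(\pi,1)$-neighborhoods for a regular, non-log-smooth $X$.)
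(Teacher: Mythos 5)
Your fiber-level argument (vanishing sphere $\Sigma$, simple connectivity, survival of the generator of $H^2$ under covers) is correct, and it is in fact exactly the heuristic the paper itself records in its remarks before the proof: the Milnor fiber of $z^2-xy$ is a $2$-sphere, so no neighborhood of the origin can yield a $K(\pi,1)$. The problem is the step you yourself flag as suppressed, and it is a genuine gap rather than a routine appeal to Thom--Mather plus Artin comparison. Concretely, it fails at two points. First, the embedding $V\hookrightarrow \CC[[t]]$ (even granted, via a finitely generated model, that $k$ embeds into $\CC$) produces \emph{formal} power series as coefficients of the equations of the closed set $X\setminus U$; divergent series cannot be evaluated at $t\neq 0$, so they define no analytic subset of $\CC^3\times\Delta$, and hence there is no analytic family $\mathcal{U}\to\Delta\setminus\{0\}$ to which a Whitney stratification or an isotopy theorem could be applied. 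Second, and more seriously, a finite \'etale cover of the geometric generic fiber $U_{\overline{\CC((t))}}$ is cut out by equations whose coefficients are arbitrary formal Laurent series; such a cover does not a priori extend to a finite \'etale cover of the family over any punctured disc, so it has no ``restriction to $\mathcal{U}_t$'' at all. The statement you actually need --- that after a Kummer base change $t=s^N$ and shrinking, every finite \'etale cover of the geometric generic fiber comes from a cover of the family, compatibly with the identification on $H^2$ --- is a specialization/nearby-cycles theorem for non-proper families; it is true in characteristic zero, but it is a substantive result whose proof is comparable in weight to the proposition itself, and Thom--Mather isotopy together with Artin's comparison theorem (which concern a fixed analytic family and a fixed complex variety, respectively) do not supply it.

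This difficulty is precisely why the paper argues non-archimedean-analytically over $C=\widehat{\bar K}$ rather than over $\CC$: the role of your $\Sigma$ is played by the tube $B$ of $P$ in $X^{an}_{C}$, which is \emph{intrinsically} attached to the data --- the hypothesis $P\in U$ directly forces $B\subseteq U^{an}_{C}$, the exact analogue of your confinement step --- and, crucially, every finite \'etale cover of the algebraic $U_{C}$ analytifies \emph{over the same field} and restricts to a cover of $B$, with the de Jong--van der Put rigid-\'etale comparison closing the loop; no transport of covers across fields is needed. The paper then proves $B$ is simply connected and $p^*:H^2(\PP^{1,an}_{C,\et},\ZZ/\ell)\isom H^2(B_\et,\ZZ/\ell)$ by exhibiting $B$ explicitly as a fibration in open discs over $\PP^1$, which is the counterpart of your ``$\Sigma$ simply connected and $\zeta|_\Sigma$ a generator.'' If you want to rescue the complex-analytic route without nearby cycles, there is a cleaner path than the one you sketched: take a finitely generated model $A\subseteq V$ of $(X, X\setminus U, \pi)$, choose a transcendental analytic arc $\gamma$ in $\Spec A(\CC)$ with Zariski-dense image passing through a point of $V(\pi)$ at which the equations of $X\setminus U$ do not all vanish at the origin, and pick $t_0$ small so that $a\mapsto a(\gamma(t_0))$ is injective on $A$; this realizes $U_\eta\times_K\CC$ (up to an isomorphism of abstract fields, which is all Proposition \ref{kpi1prop}(c) needs) as an honest \emph{fiber} of a complex algebraic family near the conical point, to which your vanishing-sphere argument applies verbatim. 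As written, however, the proposal has a hole at its load-bearing step.
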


The proof of Proposition \ref{Lodh-counterex} will be given at the end of this section.

\subsection{Remarks}
This is example not too surprising. Let us consider an analogous analytic family. It is easy to see that the Milnor fiber \cite{Milnor} at 0 of the function 
\[ f:\CC^3\to\CC, \quad f(x,y,z) = z^2 - xy, \]
(which is $x^2+y^2+z^2$ after a change of variables) is homotopy equivalent to the $2$-sphere. Moreover, its inclusion into $X\setminus f^{-1}(0)$ induces an isomorphism on $\pi_2$. It follows that for any open $U\subseteq \CC^3$ containing $0$, for some Milnor fiber the above isomorphism will factor through $\pi_2(U\setminus f^{-1}(0))$, therefore $U\setminus f^{-1}(0)$ cannot be $K(\pi, 1)$. The proof of Proposition \ref{Lodh-counterex} given below is an algebraic analog of this observation.

On the other hand, for $f:\CC^n\to \CC$ given by a monomial $x_1 \ldots x_r$, the Milnor fiber at 0 is homotopy equivalent to a torus $(S^1)^r$, which is a $K(\pi, 1)$. This explains why one should expect Theorems \ref{main} and \ref{vancycl} to be true.

\begin{lemma}\label{p1xp1}
There is an isomorphism $\PP^1_{K(\sqrt\pi)}\times \PP^1_{K(\sqrt\pi)}\setminus \text{(diagonal)} \to X_{K(\sqrt\pi)}$.
\end{lemma}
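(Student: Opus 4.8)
The plan is to exploit the fact that over $L = K(\sqrt\pi)$ the polynomial $z^2 - \pi$ factors. Writing $t = \sqrt\pi \in L$, the equation defining $X_L$ becomes $xy = (z-t)(z+t)$, so $X_L$ is a \emph{split} affine quadric surface. I will identify its projective closure with the image of the Segre embedding $\PP^1_L \times \PP^1_L \hookrightarrow \PP^3_L$ and read off the boundary divisor at infinity.

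Concretely, first I would take the projective closure of $X_L$ in $\PP^3_L$ with homogeneous coordinates $[X:Y:Z:W]$; it is the smooth quadric $XY = Z^2 - t^2 W^2 = (Z - tW)(Z + tW)$. Performing the linear change of coordinates $P = Z - tW$, $Q = Z + tW$ (which is invertible precisely because $t = \sqrt\pi$ is a unit in $L$---this is exactly where passing to $K(\sqrt\pi)$ is used), the quadric becomes $XY = PQ$. This is the Segre quadric: under $([a_0:a_1],[b_0:b_1]) \mapsto [a_0 b_0 : a_0 b_1 : a_1 b_0 : a_1 b_1]$, identified with $[X:P:Q:Y]$, the relation $XY = PQ$ is the defining equation of the image of $\PP^1_L \times \PP^1_L$.

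Next I would locate the hyperplane at infinity. The affine scheme $X_L$ is the chart $\{W \neq 0\}$, and since $W = (Q - P)/(2t)$, this chart corresponds to $\{P \neq Q\}$. In the Segre coordinates $P = a_0 b_1$ and $Q = a_1 b_0$, so $P = Q$ is the locus $a_0 b_1 - a_1 b_0 = 0$, which is precisely the diagonal $\Delta \subseteq \PP^1_L \times \PP^1_L$. Hence deleting the hyperplane at infinity deletes exactly the diagonal, and I obtain a morphism $\PP^1_L \times \PP^1_L \setminus \Delta \to X_L$; explicitly, away from $\Delta$ one may normalize $W = 1$ and send $([a_0:a_1],[b_0:b_1])$ to the point with $x = 2t\, a_0 b_0/(a_1 b_0 - a_0 b_1)$ and the analogous expressions for $y$ and $z$, each a well-defined bihomogeneous ratio of bidegree $(1,1)$.

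The only real content beyond this bookkeeping is to verify that the map is a \emph{biregular} isomorphism and not merely birational. I would check bijectivity on points and exhibit the inverse directly: it sends $(x,y,z) \in X_L$ to the pair of rulings $[x : z - t] = [z+t : y]$ and $[x : z + t] = [z - t : y]$, the two identities following from $xy = (z-t)(z+t)$, which also shows the inverse is a morphism across the apparent indeterminacy. The Segre picture makes both directions transparent, since the two projections of the Segre quadric onto its $\PP^1$ factors are exactly these two rulings; the factorization $z^2 - \pi = (z-t)(z+t)$ is precisely what makes both rulings defined over $L$, which is the reason the base change to $K(\sqrt\pi)$ is required in the statement.
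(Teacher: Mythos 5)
Your proof is correct, and it produces literally the same isomorphism as the paper, but by a more conceptual route. The paper's proof is a bare verification: it writes down $f((a:b),(c:d)) = \sqrt{\pi}\,(2ac/\delta,\, 2bd/\delta,\, (bc+ad)/\delta)$ with $\delta = ad-bc$, together with the inverse given by the two rulings $(x: z-\sqrt\pi)$ and $(z-\sqrt\pi : y)$ --- exactly the formulas you arrive at --- and offers no indication of where they come from. Your derivation (pass to the projective closure $XY = Z^2 - \pi W^2$ in $\PP^3_L$, split it as $XY=PQ$ over $L=K(\sqrt\pi)$, identify it with the Segre quadric, and observe that the hyperplane at infinity $\{W=0\}=\{P=Q\}$ pulls back to the diagonal) buys the conceptual content: it explains why the isomorphism exists, why adjoining $\sqrt\pi$ is what makes both rulings rational over the base, and it establishes the isomorphism without any need to check that two explicit maps compose to the identity. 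One small slip, which costs nothing: the ``inverse'' you write down lists the two rulings in the order swapped relative to your forward parametrization --- with your sign of the denominator $a_1b_0-a_0b_1$, the ruling $[x:z+t]$ recovers the first factor $[a_0:a_1]$ and $[x:z-t]$ recovers the second factor $[b_0:b_1]$ (the paper's formulas match on the nose because it uses the opposite sign $\delta=ad-bc$). Since the swap involution of $\PP^1_L\times\PP^1_L$ preserves the diagonal, and since the Segre argument alone already proves the forward map is an isomorphism, this does not affect the validity of your proof.
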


\begin{proof}
Let the coordinates on $\PP^1\times \PP^1$ be $((a:b),(c:d))$. The diagonal in $\PP^1\times\PP^1$ is defined by the equation $\delta:=ad-bc=0$. Let $f((a:b), (c:d)) = \sqrt{\pi}(2ac/\delta, 2bd/\delta, (bc+ad)/\delta)$. The inverse map is given by $g(x,y,z) = ((x:z-\sqrt{\pi}),(z-\sqrt\pi: y))$ on $\{z\neq \sqrt{\pi}\}$ and $g(x,y,z) = ((z+\sqrt\pi: y),(x:z+\sqrt{\pi}))$ on $\{z\neq -\sqrt{\pi}\}$. 
\end{proof}

\subsection{}
Let $C$ be the completed algebraic closure of $K$. Denote by $B$ the tube of $P$ in $X_{C}$:
\[ B = \{(x, y, z) \,:\, xy=z^2-\pi, |x|<1, |y|<1, |z|<1\} \subseteq X^{an}_{C}, \]
which we treat as a rigid analytic space over $C$.
Since $P\in U$, we have $B \subseteq U^{an}_{C}$. We denote by $p:B\to \PP^{1, an}_{C}$ the restriction to $B$ of the composition of $g:X^{an}_{C}\to \PP^{1,an}_{C}\times \PP^{1,an}_{C}$ defined in the proof of Lemma \ref{p1xp1} with the first projection $\PP^1_{C}\times \PP^1_{C}\to\PP^1_{C}$. Explicitly,
\[ p(x, y, z) = \begin{cases} (x: z-\sqrt{\pi}) & \text{if } z\neq\sqrt{\pi}, \\   
					          (z+\sqrt\pi: y) & \text{if } z\neq -\sqrt{\pi}. \\\end{cases} \]

Our goal is to prove that $p:B\to\PP^{1,an}_{C}$ is a fibration in open discs. If we were working with manifolds, we would deduce that $p$ is a homotopy equivalence which factors through $U$, hence $\PP^1$ is a homotopy retract of $U$, and deduce that $U$ is not a $K(\pi, 1)$ (in the usual sense of algebraic topology), because $\pi_2(U)$ contains $\pi_2(\PP^1)\isom\ZZ$ as a direct summand. In the rigid analytic setting, we do not have such tools at our disposal, but it is enough to show that $B$ is simply connected and that $p$ is injective on the second \'etale cohomology groups (see Corollary \ref{cor} below).

Let $U_+ = \{ (a:b)\in\PP^{1, an}_{C} \,:\, |a|\leq |b|\}$, $U_- = \{ (a:b)\in\PP^{1, an}_{C} \,:\, |a|\geq |b|\}$. This is an admissible cover of $\PP^{1, an}_{C}$. Let $B_+ = p^{-1}(U_+)$, $B_- = p^{-1}(U_-)$. Explicitly,
\[ B_+ = \{ (x, y, z) \,:\,  xy=z^2-\pi, \,|x|<1 ,\, |y|<1,\, |z|<1,\, |x|\leq|z-\sqrt{\pi}|,\, |z+\sqrt\pi| \leq |y|\}, \]
\[ B_- = \{ (x, y, z) \,:\,  xy=z^2-\pi, \, |x|<1 ,\, |y|<1,\, |z|<1,\, |x|\geq |z-\sqrt{\pi}|,\, |z+\sqrt\pi|\geq |y|\}. \]
This is an admissible cover of $B$. Let $U_0 = U_+\cap U_-$ and $B_0 = B_+ \cap B_- = p^{-1}(U_0)$.

\begin{lemma}
There are isomorphisms 
\[ h_+ : U_+ \times \{x \,:\,|x|<1\} \to B_+, \quad h_- : U_- \times \{y\,:\,|y|<1\}\to B_-, \]
\[ \text{and } h_0 : U_0\times \{z\,:\,|z|<1\} \to B_0, \]
 commuting with the projection $p$.
\end{lemma}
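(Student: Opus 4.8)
The plan is to trivialize $p$ separately over each member of the admissible cover by writing down explicit formulas, using the identification of Lemma~\ref{p1xp1} under which $p$ is the restriction of the first projection $\PP^1\times\PP^1\setminus(\text{diagonal})\to\PP^1$, whose fibres are affine lines. Over each piece a single ambient coordinate restricts to a coordinate on the fibre disc, so the inverse trivialization can be read off at once.

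Concretely, on $U_+$ the homogeneous coordinate $b$ never vanishes (if $|a|\le|b|$ and $b=0$ then $a=0$, which is impossible), so $t:=a/b$ is a well-defined function with $|t|\le1$, and I define
\[ h_+\big((a:b),\,\lambda\big)=\big(t^2\lambda-2t\sqrt{\pi},\ \lambda,\ t\lambda-\sqrt{\pi}\big),\qquad |\lambda|<1 . \]
Symmetrically, on $U_-$ the coordinate $a$ is invertible, $s:=b/a$ satisfies $|s|\le1$, and
\[ h_-\big((a:b),\,\mu\big)=\big(\mu,\ s^2\mu+2s\sqrt{\pi},\ s\mu+\sqrt{\pi}\big),\qquad |\mu|<1 ; \]
while on $U_0=U_+\cap U_-$, where $|a|=|b|$ and hence $t=a/b$ is a unit, I set
\[ h_0\big((a:b),\,\nu\big)=\big(t(\nu-\sqrt{\pi}),\ (\nu+\sqrt{\pi})/t,\ \nu\big),\qquad |\nu|<1 . \]
Under the identifications, the fibre coordinate is an ambient coordinate (namely $y$ on $B_+$, $x$ on $B_-$, $z$ on $B_0$), so the second factor is an open unit disc as in the statement.

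For each of the three maps I would then verify four essentially formal points. First, it is a morphism of rigid spaces: the only denominators are $b$, $a$, $t$, which are invertible on $U_+$, $U_-$, $U_0$ respectively. Second, its image lies in the relevant $B_\bullet$: the equation $xy=z^2-\pi$ holds identically (a one-line expansion), while the inequalities cutting out $B_\bullet$ collapse, via the ultrametric inequality, to $|t|\le1$ (resp. $|s|\le1$, $|t|=1$); here one uses repeatedly that for $|c|<1$ a non-archimedean disc $\{\,|z-c|<1\,\}$ coincides with $\{\,|z|<1\,\}$, which is exactly what makes the open unit disc in the fibre come out correctly. Third, compatibility with $p$ is immediate from the formula, e.g. $p\circ h_+=(x:z-\sqrt{\pi})=(t:1)=(a:b)$. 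Fourth, each map is an isomorphism: its inverse sends $(x,y,z)$ to $\big(p(x,y,z),\,\lambda\big)$ with $\lambda$ the relevant ambient coordinate, and the identity $t\lambda=z+\sqrt{\pi}$ on $B_+$ (coming from $xy=(z-\sqrt{\pi})(z+\sqrt{\pi})$, together with $x=t(z-\sqrt{\pi})$) shows this is a two-sided inverse.

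The main point requiring care is the second and fourth steps together: matching the \emph{tube} conditions $|x|,|y|,|z|<1$ and the inequalities defining $B_\bullet$ against the product $U_\bullet\times(\text{open unit disc})$, and in particular checking \emph{surjectivity} --- that the fibre coordinate sweeps out the entire open disc rather than a smaller one. This is precisely where the ultrametric collapse of discs above is indispensable. A secondary nuisance is the locus $z=\pm\sqrt{\pi}$, where the first branch of $p$ degenerates and one must pass to the second; but since $t=a/b$ is defined on all of $U_+$ independently of the branch (the two branches yield the same point of $\PP^1$), the formula for $h_+$ extends across this locus without modification, and likewise for $h_-$ and $h_0$.
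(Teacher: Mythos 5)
Your proof is correct and follows the same strategy as the paper's: explicit trivializations over each member of the admissible cover, with inverse given by $(p,\ \text{ambient coordinate})$. In fact, your formulas are the correct ones, and the paper's printed formulas contain an error which your version repairs: the paper sets $h_+((a:b),x)=\bigl(x,\ \frac{a}{b}(\frac{a}{b}x+2\sqrt{\pi}),\ \frac{a}{b}x+\sqrt{\pi}\bigr)$, whose image point has $z-\sqrt{\pi}=\frac{a}{b}x$, so $p$ of the image is $(x:\frac{a}{b}x)=(b:a)$, not $(a:b)$; thus the paper's $h_+$ lands in $B_-$ rather than $B_+$ and commutes with $p$ only up to the involution $\sigma(a:b)=(b:a)$ (and likewise for its $h_-$ and $h_0$). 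Your identification of the correct fibre coordinates --- $y$ on $B_+$, $x$ on $B_-$, $z$ on $B_0$ --- is forced, as you can see from the fibre of $p$ over $(0:1)\in U_+$, which is $\{x=0,\ z=-\sqrt{\pi},\ |y|<1\}$, on which the ambient coordinate $x$ is constant; so the paper's labelling (both in the statement's disc variables and in its proof) has the two hemispheres swapped. Since the lemma only asserts the existence of isomorphisms over $U_\pm$, $U_0$ commuting with $p$ (the name of the disc coordinate being immaterial), the statement itself is correct, your proof establishes it, and the paper's formulas establish it only after precomposition with $\sigma\times\mathrm{id}$. Your care about surjectivity (that the fibre coordinate sweeps the full open unit disc, via the ultrametric collapse of inequalities to $|a/b|\le 1$, resp. $|b/a|\le 1$, $|a/b|=1$) and about the branch locus $z=\pm\sqrt{\pi}$ is exactly what is needed and is handled correctly.
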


\begin{proof}
Let 
\begin{align*}
 h_+((a:b), x) &= \left(x,\, \frac{a}{b}\left(\frac{a}{b} x + 2\sqrt\pi\right),\, \frac{a}{b}x+\sqrt\pi\right), \\
 h_-((a:b), y) &= \left(\frac{b}{a}\left(\frac{b}{a} y - 2\sqrt\pi\right),\, y,\, \frac{b}{a}y-\sqrt\pi\right), \\
  h_0((a:b), z) &= \left(\frac{b}{a}\left(z-\sqrt\pi\right),\, \frac{a}{b}\left(z + \sqrt\pi\right),\, z\right). 
\end{align*}
Their inverses are given by $p\times x$, $p\times y$, $p\times z$, respectively. 
\end{proof}

\subsection{\'Etale topology of rigid analytic spaces} We will need the following facts (under the $\chara k = 0$ assumption):
\begin{itemize}
	\item the closed disc is simply connected \cite[6.3.2]{Berkovich},
	\item the open disc is simply connected (by the above and a limit argument),
	\item a product of (closed and open) discs has no nontrivial finite \'etale covers,
	\item the closed disc has trivial cohomology with $\ZZ/\ell$ coefficients (\cite[6.1.3]{Berkovich} combined with \cite[6.3.2]{Berkovich}),
	\item the open disc has trivial cohomology with $\ZZ/\ell$ coefficients (by \cite[6.3.12]{Berkovich} applied to the above),
	\item \'etale cohomology of rigid analytic spaces satisfies the K\"unneth formula.
\end{itemize}

\begin{corollary}\label{cor}
Suppose that $\chara k = 0$. Then $B$ is simply connected and the map
\[ p^*:H^2(\PP^{1,an}_{C, \et}, \ZZ/\ell)\to H^2(B_\et, \ZZ/\ell) \]
is an isomorphism for any prime number $\ell$.
\end{corollary}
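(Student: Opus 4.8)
The plan is to leverage the product descriptions of $B_+$, $B_-$, and $B_0$ supplied by the preceding lemma, together with the listed facts about the \'etale topology of discs and the K\"unneth formula. Write $D$ for the open unit disc. In the affine coordinate $a/b$ on $\PP^{1,an}_{C}$, the piece $U_+ = \{|a|\leq|b|\}$ is the closed unit disc and $U_- = \{|a|\geq|b|\}$ is likewise a closed disc (in the coordinate $b/a$), while the overlap $U_0 = \{|a|=|b|\}$ is, in the coordinate $t = a/b$, the unit annulus $\{|t|=1\}$, which is connected. The lemma then gives isomorphisms $h_\pm : U_\pm \times D \isom B_\pm$ and $h_0 : U_0 \times D \isom B_0$ under which $p$ becomes the first projection.

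For simple connectedness, I would take an arbitrary finite \'etale cover $V\to B$ and show it is trivial. Restricting to $B_\pm \isom U_\pm\times D$, a product of discs, which by hypothesis carries no nontrivial finite \'etale covers, yields trivializations $V|_{B_\pm}\isom B_\pm\times F$ for a finite set $F$. These induce two trivializations of $V|_{B_0}$ differing by an automorphism of the split cover $B_0\times F\to B_0$. Since $B_0\isom U_0\times D$ is connected --- being the product of the connected annulus $U_0$ with the connected disc $D$ --- this automorphism is a constant permutation of $F$, which can be removed by relabeling. Hence $V\isom B\times F$ is split, so $B$ has no nontrivial connected finite \'etale cover and is simply connected.

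For the cohomological statement I would compare, via $p^*$, the Mayer--Vietoris long exact sequences (with $\ZZ/\ell$ coefficients) of the admissible covers $\PP^{1,an}_{C} = U_+\cup U_-$ and $B = B_+\cup B_-$; these form a commutative ladder by naturality. The relevant comparison maps
\[ H^i(U_+)\oplus H^i(U_-) \to H^i(B_+)\oplus H^i(B_-), \qquad H^i(U_0)\to H^i(B_0) \]
are, under the product identifications, precisely the K\"unneth pullbacks along the projections $U_\bullet\times D\to U_\bullet$. Because the open disc $D$ has trivial $\ZZ/\ell$-cohomology, the K\"unneth formula makes each of these an isomorphism in every degree. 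The five lemma applied to the ladder then shows that $p^*:H^i(\PP^{1,an}_{C,\et}, \ZZ/\ell)\to H^i(B_\et, \ZZ/\ell)$ is an isomorphism for all $i$, and in particular for $i=2$.

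The individual steps are formal once the rigid-analytic inputs are granted, so the real content lies in the listed facts about discs. The points demanding the most care are the availability and naturality of the two-element Mayer--Vietoris sequence in the rigid \'etale topology (obtained from the \v{C}ech-to-derived-functor spectral sequence) and the applicability of K\"unneth to the products at hand; one must also not forget to record that the overlap $B_0$ is connected, since this is exactly what forces the gluing datum in the simple-connectedness argument to be constant.
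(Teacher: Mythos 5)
Your proof is correct and follows essentially the same route as the paper: the same decomposition $B = B_+ \cup B_-$ with product structures over $U_\pm$, $U_0$, simple connectedness via triviality of covers on the pieces plus connectedness of the overlap, and comparison of the two Mayer--Vietoris sequences using K\"unneth and the vanishing cohomology of discs. The only cosmetic difference is that you invoke the five lemma on the whole ladder, whereas the paper extracts the commutative square of boundary isomorphisms $H^1(U_0)\isom H^2(\PP^{1,an}_C)$ and $H^1(B_0)\isom H^2(B)$ and uses the K\"unneth isomorphism $H^1(U_0)\isom H^1(B_0)$ directly.
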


\begin{proof}
Since $B_+$ and $B_-$ are both the product of an open disc and a closed disc and $\chara k = 0$, they are simply connected. Their intersection $B_0$ is connected, hence their union $B$ is simply connected (consider the sheaf of sections of a covering $B'\to B$).

Consider the two Mayer--Vietoris sequences for the \'etale cohomology of $\ZZ/\ell$ with respect to the two open covers $\{U_+, U_-\}$ and $\{B_+, B_-\}$. Since the latter cover is the pullback by $p$ of the former, $p$ induces a morphism of Mayer--Vietoris sequences
\[
	\xymatrix{
		\ldots\ar[r] & H^1(U_{+,\et}, \ZZ/\ell)\oplus H^1(U_{-,\et}, \ZZ/\ell) \ar[r]\ar[d]^{p^*} & H^1(U_{0,\et}, \ZZ/\ell)  \ar[r]\ar[d]^{p^*} & H^2(\PP^{1,an}_{C, \et}, \ZZ/\ell) \ar[d]^{p^*} \ar[r] & \ldots \\
		\ldots\ar[r] & H^1(B_{+,\et}, \ZZ/\ell)\oplus H^1(B_{-,\et}, \ZZ/\ell) \ar[r] & H^1(B_{0,\et}, \ZZ/\ell)  \ar[r] & H^2(B_\et, \ZZ/\ell) \ar[r] & \ldots
	}	
\]

Because $H^i(U_{\pm, \et}, \ZZ/\ell) = H^i(B_{\pm, \et}, \ZZ/\ell) = 0$ for $i=1,2$, the above diagram gives us a commutative diagram whose horizontal maps are isomorphisms
\[
	\xymatrix{
		H^1(U_{0,\et}, \ZZ/\ell)  \ar[r]^{\sim} \ar[d]^{p^*} & H^2(\PP^{1,an}_{C, \et}, \ZZ/\ell) \ar[d]^{p^*} \\
		H^1(B_{0,\et}, \ZZ/\ell)  \ar[r]^\sim & H^2(B_\et, \ZZ/\ell).
	}
\]
But $B_0$ is the product of $U_0$ with a disc, hence the left vertical map is an isomorphism. We conclude that the right map is an isomorphism as well.
\end{proof}

We can now prove Proposition \ref{Lodh-counterex}:

\begin{proof}[Proof of Proposition \ref{Lodh-counterex}.]
By Lemma \ref{kpi1prop}(c), it suffices to show that $U_C$ is not a $K(\pi, 1)$. Let $U' \to U_{C}$ be a finite \'etale morphism and let $B' \to B$ be the induced covering space of $B$. Pick a prime number $\ell$ and consider the diagram of pull-back maps
\[
  \xymatrix{
  	H^2(B'_\et, \ZZ/\ell)        &  H^2(U'^{an}_\et, \ZZ/\ell) \ar[l]  & \\
  	H^2(B_\et, \ZZ/\ell) \ar[u] &  H^2(U^{an}_\et, \ZZ/\ell)  \ar[u]\ar[l]  & H^2(\PP^{1, an}_{C,\et}, \ZZ/\ell). \ar[l]  
  }
\] 
Because $B$ is simply connected, $B'$ is a disjoint union of copies of $B$, hence $H^2(B_\et, \ZZ/\ell) \to H^2(B'_\et, \ZZ/\ell)$ is injective. Because $H^2(\PP^{1, an}_{C, \et}, \ZZ/\ell)\to H^2(B_\et, \ZZ/\ell)$ is an isomorphism, we conclude that $H^2(\PP^{1, an}_{C,\et}, \ZZ/\ell)\to H^2(U'^{an}_\et, \ZZ/\ell)$ is injective. As the spaces involved are smooth over $C$, by the rigid-\'etale comparison theorem \cite[Theorem 7.3.2]{deJongVDP}, $H^2(\PP^{1}_{C,\et}, \ZZ/\ell)\to H^2(U'_\et, \ZZ/\ell)$ is also injective. This shows that $U_C$ is not a $K(\pi, 1)$, because we cannot kill the image of $H^2(\PP^1_{C,\et}, \ZZ/\ell) \isom \ZZ/\ell$ in $H^2(U_{C,\et}, \ZZ/\ell)$ by a finite \'etale $U'\to U_C$.    
\end{proof}

\section{The equicharacteristic zero case}
\label{s:nakayama}

\begin{theorem}\label{vancycl}
Let $(X, \Mm_X)$ be a regular (cf. \cite[Definition 2.1]{KatoToric}) log scheme over $\QQ$ such that $X$ is locally noetherian, and let $\bar x\to X$ be a geometric point. Let $\Ff$ be a locally constant constructible abelian sheaf on $X^\circ:=(X, \Mm_X)_{tr}$, the biggest open subset on which $\Mm_X$ is trivial, and let $\zeta\in H^i(X^\circ, \Ff)$ for some $i>0$. There exists an \'etale neighborhood $U$ of $\bar x$ and a finite \'etale surjective map $V\to U^\circ$ such that $\zeta$ maps to zero in $H^i(V, \Ff)$.
\end{theorem}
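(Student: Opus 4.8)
The plan is to reduce to the strictly henselian local situation at $\bar x$, to compute the cohomology of the algebraic ``Milnor fibre'' $X_{(\bar x)}^\circ := X_{(\bar x)}\times_X X^\circ$ by means of the absolute cohomological purity isomorphism \eqref{puriso}, and then to kill cohomology classes by tame Kummer covers. Here $X_{(\bar x)} = \Spec \Oo_{X,\bar x}^{sh}$, which is again log regular for the pullback log structure, and over $X_{(\bar x)}$ every integer $n$ is invertible since we work over $\QQ$. For the reduction, note that $X_{(\bar x)}^\circ = \varprojlim_U U^\circ$ is a cofiltered limit over the étale neighbourhoods $U$ of $\bar x$ with affine transition maps, so that $H^i(X_{(\bar x)}^\circ, \Ff) = \varinjlim_U H^i(U^\circ, \Ff)$; moreover every finite étale surjective cover of $X_{(\bar x)}^\circ$, together with the vanishing of the pullback of a fixed class, is already defined over some $U^\circ$. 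Thus it suffices to produce a finite étale surjective cover of $X_{(\bar x)}^\circ$ killing the image $\zeta_0$ of $\zeta$; both the cover and the vanishing then descend to an étale neighbourhood $U$, yielding the asserted $V\to U^\circ$.

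In the strictly henselian case write $j:X_{(\bar x)}^\circ \to X_{(\bar x)}$ and $r = \rank \bar\Mm^{gp}_{X,\bar x}$. Since $X_{(\bar x)}$ is strictly henselian, $H^p(X_{(\bar x)}, -)$ vanishes for $p>0$, so the Leray spectral sequence for $j$ collapses and \eqref{puriso} identifies, for constant coefficients,
\[ H^q(X_{(\bar x)}^\circ, \ZZ/n) \isom (R^q j_* \ZZ/n)_{\bar x} \isom \bigwedge\nolimits^q \bar\Mm^{gp}_{X,\bar x}\otimes \ZZ/n(-1). \]
In particular the entire cohomology sits in the top filtration step of the Leray spectral sequence, so there are no lower-filtration obstructions to worry about. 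This is the cohomology of an $r$-dimensional torus; in characteristic $0$ all covers are tame, $\pi_1(X_{(\bar x)}^\circ)$ is the profinite completion of $\ZZ^r$ (the inertia of the $r$ branches of the boundary), and every locally constant constructible $\Ff$ is trivialised by a Kummer cover.

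Choosing a chart $\bar P\isom\bar\Mm_{X,\bar x}$ as in \ref{chart}, the Kummer covering induced by $P\xto{\times N} P$ restricts over $X_{(\bar x)}^\circ$ to a finite étale surjective cover whose total space is again the trivial locus of a log regular scheme (tameness, i.e.\ Abhyankar's lemma, in characteristic $0$). I would first apply such a covering with $N=N_1$ chosen to trivialise $\Ff$, reducing to constant coefficients on a log regular $Y = X_{(\bar x)}[N_1]^\circ$, where the purity computation of the previous paragraph applies verbatim. A further Kummer covering of index $N_2$ pulls back the purity isomorphism so that $[N_2]^*$ acts on $\bar\Mm^{gp}$ by multiplication by $N_2$, hence on $\bigwedge^q \bar\Mm^{gp}(-1)$ by $N_2^q$ via the cup-product structure. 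Taking $n\mid N_2$ therefore annihilates $H^q(Y,\ZZ/n)$ for every $q\geq 1$, and in particular the image of $\zeta$. The composite of the two Kummer coverings is the desired finite étale surjective cover of $X_{(\bar x)}^\circ$ killing $\zeta_0$.

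The main obstacle is the compatibility invoked in the last step: that the purity isomorphism \eqref{puriso} is functorial for Kummer pullback, with $[N]^*$ acting as multiplication by $N$ on $\bar\Mm^{gp}$ (so as $N^q$ after passing to $\bigwedge^q$). Once this is established the divisibility argument is immediate; the remaining inputs (preservation of log regularity under tame Kummer covers, the limit/descent argument, and the triviality of $\Ff$ after a Kummer cover) are standard in characteristic $0$. Equivalently, the computation above shows that $X_{(\bar x)}^\circ$ is a $K(\pi,1)$, and one may deduce the theorem by combining this with the killing criterion of Proposition \ref{kpi1prop}(a) and the descent to an étale neighbourhood.
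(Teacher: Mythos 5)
Your proposal is correct and uses essentially the same mechanism as the paper's proof: both reduce to constant coefficients by a log-Abhyankar-type input and then kill the class by a Kummer cover of degree divisible by $n$, using that the purity isomorphism \eqref{puriso} is functorial and that pullback along such a cover acts on $\bigwedge^q(\bar\Mm^{gp}_{X,\bar x}\otimes\ZZ/n\ZZ(-1))$ as $\bigwedge^q$ of multiplication by the degree, hence as zero for $q>0$. The differences are only organizational --- you strictly henselize at $\bar x$ first and trivialize $\Ff$ using the local description of $\pi_1(X_{(\bar x)}^\circ)$ as $\Hom(\bar\Mm^{gp}_{X,\bar x},\hat{\ZZ}(1))$, whereas the paper stays on $X$, trivializes $\Ff$ via \cite[Theorem 10.3.43]{GabberRamero} applied to a finite log \'etale extension of a global trivializing cover, reads the vanishing off the stalk of $R^i j'_*$, and descends via the normalization trick of \ref{secondred} --- and the purity compatibility you flag as the ``main obstacle'' is exactly the functoriality that the paper likewise invokes without further proof.
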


\noindent \emph{Proof.}

\subsubsection{}
\label{secondred}
In proving the assertion, I claim that we can assume that $\Ff$ is constant. Let $Y\to X^\circ$ be a finite \'etale surjective map such that the pullback of $\Ff$ to $Y$ is constant. By the logarithmic version of Abhyankar's lemma \cite[Theorem 10.3.43]{GabberRamero}, $Y = X'^{\circ}$ for a finite and log \'etale $f:(X', \Mm_{X'})\to (X, \Mm_X)$. Then $(X', \Mm_{X'})$ is also log regular (by \cite[Theorem 8.2]{KatoToric}). Choose a geometric point $\bar x'\to X'$ mapping to $\bar x$, and let $\Ff' = f^{\circ *} \Ff$, which is a constant sheaf on $X'^{\circ}$. 

Suppose that we found an \'etale neighborhood $U'$ of $x'$ and a finite \'etale surjective map $V'\to U'^{\circ}$ killing $\zeta' := f^{\circ*} (\zeta) \in H^i(X'^{\circ}, \Ff')$. Let $X''$ be the normalization of $U'$ in $V'$, and choose a geometric point $\bar x''$ mapping to $\bar x'$. By \cite[Th\'eor\`eme 18.12.1]{EGA_IV_4} (or \stacks{02LK}), there exists a diagram 
\[
	\xymatrix{ 
		\bar{x}'' \ar[d] \ar[r]  & V \ar[d] \ar[r] & X'' \ar[d] \\
		\bar x \ar[r]         & U \ar[r]        & X
	}
\]
with $U\to X$ and $V\to X''$ \'etale and $V\to U$ finite. It follows that $V^\circ\to U^\circ$ is also \'etale, and that the pull-back of $\zeta$ to $V^\circ$ is zero.

In proving the theorem, we can therefore assume that $\Ff\isom \ZZ/n\ZZ$ for some integer $n$, by considering the direct summands.

\subsubsection{}
\label{existschart}
The question being \'etale local around $\bar x$, we can assume that there exists a chart $g:(X, \Mm_X)\to \AA_P$ for a fine saturated monoid $P$, which we use to form a cartesian diagram 
\begin{equation} \label{cartesianchart}
	\xymatrix{
	(X',\Mm_{X'}) \ar[r]^f \ar[d]_{g'}    & (X, \Mm_X) \ar[d]^g\\
	\AA_{P}  \ar[r]_{\AA_{\cdot n}}  & \AA_{P} .
	}	
\end{equation}
Then $(X', \Mm_{X'})$ is log regular, $f$ is finite, and $f:X'^{\circ} \to X^\circ$ is \'etale.
Choose a geometric point $\bar x'\to X'$ mapping to $\bar x$.  We have a commutative diagram
\[ 
	\xymatrix{
		\bar \Mm^{gp}_{X,\bar x} \ar[r]^{f^*} & {\bar \Mm}^{gp}_{X',\bar x'} \ \\
		\bar P \ar[u] \ar[r]_{\cdot n} & \bar P \ar[u]
	}
\]
where the vertical maps are surjections induced by the strict morphisms $g$ and $g'$. We conclude that the map
\begin{equation}\label{pullbackmap} 
f^\flat \otimes \ZZ/n\ZZ: \bar \Mm^{gp}_{X,\bar x}\otimes \ZZ/n\ZZ \to \bar \Mm^{gp}_{X'\bar x'} \otimes \ZZ/n\ZZ
\end{equation}
is zero.
 
\subsubsection{} 
Denote the inclusion $X^\circ\hookrightarrow X$ (resp. $X'^{\circ}\hookrightarrow X'$) by $j$ (resp. $j'$). By log absolute cohomological purity \eqref{puriso}, there is a functorial isomorphism
\[ R^i j_* (\ZZ/n\ZZ) \isom \bigwedge\nolimits^i (\bar \Mm^{gp}_X \otimes \ZZ/n\ZZ(-1)).  \]
In our situation, this means that there is a commutative diagram
\[\xymatrix{ 
	H^i(X'^{\circ}, \ZZ/n\ZZ) \ar[r]^{sp_{\bar x'}} & R^i j'_*(\ZZ/n\ZZ)_{\bar x'} \ar[r]^{\sim\hskip .2in} & \bigwedge\nolimits^i (\bar \Mm^{gp}_{X',\bar x'}\otimes \ZZ/n\ZZ) \\
	H^i(X^\circ, \ZZ/n\ZZ) \ar[r]_{sp_{\bar x}} \ar[u]^{f^*} & R^i j_* (\ZZ/n\ZZ)_{\bar x} \ar[r]^{\sim\hskip .2in} \ar[u]^{f^*} & \bigwedge\nolimits^i (\bar \Mm^{gp}_{X,\bar x}\otimes \ZZ/n\ZZ) \ar[u]_{\bigwedge^i \eqref{pullbackmap}}
}\]
where the rightmost map is zero for $i>0$ because \eqref{pullbackmap} is zero.

It follows that $\zeta$ maps to zero in $R^i j'_*(\ZZ/n\ZZ)_{\bar x'}$, hence there exists an \'etale neighborhood $U'$ of $\bar x'$ such that $\zeta$ maps to zero in $H^i(U'^{\circ}, \ZZ/n\ZZ)$. Applying once again the argument of the second paragraph of \ref{secondred} yields an \'etale neighborhood $U$ of $\bar x$ and a finite \'etale map $V\to U$ killing $\zeta$, as desired. \hfill \qed

\section{The comparison theorem}
\label{s:comparison}

\subsection{Faltings' site and Faltings' topos}
\label{ss:faltingstopos}

In \cite{AbbesGros}, Abbes and Gros have developed a theory of generalized co-vanishing topoi, of which the Faltings' topos is a special case. This topos has first been introduced in \cite{FaltingsAlmost}, though the definition of \cite{AbbesGros} is different. For reader's convenience, let us recall the definitions, adapting them to our setup. 

\begin{definition}\label{def:faltingstopos}
Let $f:Y\to X$ be a morphism of schemes. 
\begin{enumerate}[(a)]
	\item The \emph{Faltings' site} $E$ associated to $f$ is the site with
		\begin{itemize}
			\item {\sc objects} morphisms $V\to U$ over $f:Y\to X$ with $U\to X$ \'etale and $V\to U \times_X Y$ finite \'etale,
			\item {\sc morphisms} commutative squares over $f:Y \to X$,
			\item {\sc topology} generated by coverings of the following form:
				\begin{itemize}
					\item { (V, for vertical)} $\{ (V_i\to U) \to (V\to U)\}$ with $\{V_i\to V\}$ a covering,
					\item { (C, for cartesian)} $\{ (V\times_U U_i\to U_i) \to (V\to U) \}$ with $\{U_i\to U\}$ a covering. 
				\end{itemize}
		\end{itemize}
	\item The \emph{Faltings' topos} $\tilde{E}$ is the topos associated to $E$. 
	\item We denote by $\Psi:Y_\et\to \tilde E$ the morphism of topoi induced by the continuous map of sites $(V\to U)\mapsto V : E\to \Et_{/Y}$.
\end{enumerate}
\end{definition}

\begin{proposition}\label{someprop}
Let $f:Y\to X$ be a map of coherent schemes with the property that for every geometric point $\bar x$ of $X$, $X_{(\bar x)}\times_X Y$ has only a finite number of connected components, and let $\pp$ be a set of prime numbers. The following conditions are equivalent:
\begin{enumerate}[(a)]
	\item for every \'etale $U$ over $X$ and every $\pp$-torsion locally constant constructible abelian sheaf $\Ff$ on $U\times_X Y$, we have $R^i\Psi_{U*} \Ff = 0$ for $i>0$, where $\Psi_U : U\times_X Y \to \tilde E_{U}$ is the morphism \ref{def:faltingstopos}(c) for $U\times_X Y\to U$,
	\item for every \'etale $U$ over $X$, every $\pp$-torsion locally constant constructible abelian sheaf $\Ff$ on $U\times_X Y$, every class $\zeta\in H^i(U\times_X Y, \Ff)$ with $i>0$, and every geometric point $\bar x\to U$, there exists an \'etale neighborhood $U'$ of $\bar x$ in $U$ and a finite \'etale surjective map $V\to U\times_X Y$ such that the image of $\zeta$ in $H^i(V, \Ff)$ is zero.
	\item for every geometric point $\bar x\to X$, $X_{(\bar x)}\times_X Y$ is a $K(\pi, 1)$ for $\pp$-adic coefficients.
\end{enumerate}
\end{proposition}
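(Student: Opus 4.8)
The plan is to prove the two biconditionals $(a)\Leftrightarrow(b)$ and $(b)\Leftrightarrow(c)$ separately, the former by unwinding the Faltings topology and the latter by a limit argument over \'etale neighborhoods.

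\textbf{The equivalence $(a)\Leftrightarrow(b)$.} The first step is to identify, for each \'etale $U$ over $X$, the sheaf $R^i\Psi_{U*}\Ff$ on the Faltings topos $\tilde E_U$ of $U\times_X Y\to U$ with the sheaf associated to the presheaf
\[ (V\to U')\ \longmapsto\ H^i(V,\Ff|_V), \]
exactly as $R^i\rho_*\Ff$ was described in the proof of Proposition \ref{kpi1prop}(a). Consequently $R^i\Psi_{U*}\Ff=0$ ($i>0$) if and only if every section over every object of the Faltings site is killed by a covering. I would then reduce to the \emph{cartesian} objects $(U'\times_X Y\to U')$: for a general object $(V\to U')$ the structure map $g:V\to U'\times_X Y$ is finite \'etale, so $g_*(\Ff|_V)$ is again locally constant constructible and $H^i(V,\Ff|_V)\isom H^i(U'\times_X Y, g_*(\Ff|_V))$; transporting a class and its vanishing back and forth along the base-change diagram is the same manoeuvre used in Proposition \ref{kpi1prop}(a), and it shows that it suffices to check the killing condition on cartesian objects (at the cost of varying the sheaf, which is harmless since (b) quantifies over all sheaves and all \'etale $U$).

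The second step is to match ``killed by a covering'' with the shape of condition (b). Here I would invoke the fact (due to Abbes--Gros, \cite{AbbesGros}) that every covering in the Faltings topology refines to a cartesian covering $\{(V\times_{U'}U_i\to U_i)\}$ followed by vertical coverings $\{(V_{ij}\to U_i)\to (V\times_{U'}U_i\to U_i)\}$, where each $V_{ij}\to V\times_{U'}U_i$ is finite \'etale surjective. Spelling out the vanishing of the sheafification with this two-step refinement, and using that a sheaf vanishes if and only if all of its sections vanish locally (so that one \'etale neighborhood chosen per geometric point of $U'$ assembles into an \'etale cover), turns the condition over a cartesian object $(U'\times_X Y\to U')$ into precisely the statement of (b): for each geometric point $\bar x$ there is an \'etale neighborhood and a finite \'etale surjective cover of the fiber killing the class. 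This yields $(a)\Leftrightarrow(b)$.

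\textbf{The equivalence $(b)\Leftrightarrow(c)$.} This is the routine limit argument. Writing $X_{(\bar x)}=\varprojlim_U U$ over the cofiltered system of \'etale neighborhoods $U$ of $\bar x$, we get $X_{(\bar x)}\times_X Y=\varprojlim_U (U\times_X Y)$, and the standard facts that locally constant constructible sheaves, finite \'etale covers, and cohomology classes all descend to a finite stage (with $H^i(X_{(\bar x)}\times_X Y,\Ff)=\varinjlim_U H^i(U\times_X Y,\Ff)$) let me pass freely between the fiber and its finite approximations. Combined with the characterization of the $K(\pi,1)$ property in Proposition \ref{kpi1prop}(a), condition (c) says exactly that every class on $X_{(\bar x)}\times_X Y$ is killed by some finite \'etale surjective cover; descending that cover to a finite stage gives (b), and conversely (b) at stages converging to the fiber supplies the cover required by Proposition \ref{kpi1prop}(a). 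The running hypothesis that $X_{(\bar x)}\times_X Y$ has finitely many connected components is what guarantees that \ref{conncomp} holds, so that Proposition \ref{kpi1prop} is applicable throughout.

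\textbf{Main obstacle.} The limit argument for $(b)\Leftrightarrow(c)$ is essentially formal, and the identification of $R^i\Psi_{U*}\Ff$ as a sheafification is standard. I expect the genuinely delicate point to be the topological bookkeeping in $(a)\Leftrightarrow(b)$: namely the reduction from arbitrary objects $(V\to U')$ to cartesian objects via finite \'etale pushforward together with the base-change compatibility of cohomology pull-backs, and the use of the cartesian-then-vertical refinement of coverings in the Faltings site. Getting these two features of the Abbes--Gros formalism to interact correctly---so that a covering killing the pushed-forward class produces an honest covering of the original object killing the original class---is where the proof requires real care.
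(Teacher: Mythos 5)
Your proof is correct and takes essentially the same route as the paper: the paper likewise deduces $(a)\Leftrightarrow(b)$ from the description of $R^i\Psi_{U*}\Ff$ as the sheaf associated to the presheaf $(V\to U')\mapsto H^i(V,\Ff)$ together with the finite-\'etale-pushforward manoeuvre of Proposition \ref{kpi1prop}, and treats $(b)\Leftrightarrow(c)$ as the limit argument made available by Proposition \ref{kpi1prop}. Your write-up simply makes explicit two points the paper leaves implicit, namely the cartesian-then-vertical refinement of coverings in the Faltings site and the descent of sheaves, classes, and covers to a finite stage of the system of \'etale neighborhoods.
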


\begin{proof}
The equivalence of (a) and (b) follows from the fact that $R^i \Psi_* \Ff$ is the sheaf associated to the presheaf $(V\to U)\mapsto H^i(V, \Ff)$ on $E$ and the argument in Proposition \ref{kpi1prop}(b). The equivalence of (b) and (c) is clear in the view of Proposition \ref{kpi1prop}.  
\end{proof}

\begin{corollary}\label{AtoB}
Suppose that $X$ has a basis for the \'etale topology consisting of $U$ for which $U\times_X Y$ is a $K(\pi, 1)$ for $\pp$-adic coefficients. Then the conditions of Proposition \ref{someprop} are satisfied. 
\end{corollary}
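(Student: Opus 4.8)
The plan is to verify condition (b) of Proposition~\ref{someprop} directly; since the three conditions of that proposition are equivalent, this suffices. The only nontrivial input beyond the hypothesis will be the characterization of the $K(\pi,1)$ property furnished by Proposition~\ref{kpi1prop}(a), which turns ``$K(\pi,1)$'' into the concrete statement that every higher cohomology class dies on a finite \'etale surjective cover.

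So I would fix the data appearing in (b): an \'etale $X$-scheme $U$, a $\pp$-torsion locally constant constructible sheaf $\Ff$ on $U\times_X Y$, a class $\zeta\in H^i(U\times_X Y,\Ff)$ with $i>0$, and a geometric point $\bar x\to U$. The first step is to pass to a member of the given basis. Since the family of \'etale $X$-schemes $U'$ for which $U'\times_X Y$ is a $K(\pi,1)$ is a basis for the \'etale topology of $X$, the \'etale $X$-scheme $U$ admits a covering $\{U_i'\to U\}$ by such $U'$; as $\bar x\to U$ lifts to one of the $U_i'$, call it $U'$, I obtain an \'etale neighborhood $U'\to U$ of $\bar x$ (any morphism of \'etale $X$-schemes being \'etale) for which $U'\times_X Y$ is a $K(\pi,1)$ and for which $U'\times_X Y\to U\times_X Y$ is \'etale.

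With $U'$ in hand, I would restrict $\Ff$ and $\zeta$ along this \'etale map, getting a $\pp$-torsion locally constant constructible sheaf $\Ff'=\Ff|_{U'\times_X Y}$ and a class $\zeta'\in H^i(U'\times_X Y,\Ff')$. Choosing an integer $n$ with $\ass n\subseteq\pp$ and $n\Ff=0$ exhibits $\Ff'$ as a sheaf of $\ZZ/(n)$-modules, so Proposition~\ref{kpi1prop}(a) applies to $U'\times_X Y$ (which satisfies \ref{conncomp}, as being a $K(\pi,1)$ presupposes this). That characterization yields a finite \'etale surjective map $V\to U'\times_X Y$ with $\zeta'|_V=0\in H^i(V,\Ff'|_V)$, and the pair $(U',V)$ is precisely the data required by condition (b).

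I do not expect a serious obstacle here: the argument is essentially bookkeeping of the quantifiers in (b) against the hypothesis, with the one substantive ingredient already isolated in Proposition~\ref{kpi1prop}(a). The only points deserving a moment's care are that the basis refinement genuinely produces an \'etale neighborhood of $\bar x$ \emph{in} $U$ rather than merely an \'etale $X$-scheme, and that constructibility and the annihilator $n$ survive restriction along $U'\times_X Y\to U\times_X Y$. A more conceptual but technically heavier alternative would be to verify condition (c) instead, writing $X_{(\bar x)}\times_X Y$ as the cofiltered limit of $U\times_X Y$ over basic \'etale neighborhoods of $\bar x$ (with affine transition maps) and invoking the compatibility of locally constant constructible sheaves, their cohomology, and finite \'etale covers with such limits in the sense of the standard passage-to-the-limit results of \cite{Artin}; there the bulk of the effort would migrate into those limit arguments.
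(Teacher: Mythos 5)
Your proof is correct and is exactly the bookkeeping the paper leaves implicit (the corollary is stated there without proof): one verifies condition (b) of Proposition \ref{someprop} by refining $U$ by a basis member $U'$ through which $\bar x$ lifts, and then applying the characterization of Proposition \ref{kpi1prop}(a) to $U'\times_X Y$. Your reading of (b) --- with the finite \'etale surjective cover taken over $U'\times_X Y$ rather than over $U\times_X Y$ as literally printed --- is the intended one, since that is what the equivalence of (b) with (c) produces.
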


\begin{theorem} \label{comparison}
Let $(X, \Mm_X)$ be a log smooth log scheme over $(S, \Mm_S)$ such that $X_\eta$ is smooth over $\eta$, and let $X^\circ = (X, \Mm_X)_{tr}$. If $\chara k = 0$, assume moreover that $(X, \Mm_X)$ is saturated. Then for every geometric point $\bar x$ of $X$, $X_{(\bar x)}\times_X X^\circ_{\bar\eta}$ is a $K(\pi, 1)$.
\end{theorem}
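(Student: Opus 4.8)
The plan is to deduce the theorem from Proposition \ref{someprop}, which converts the global $K(\pi,1)$-statement into a local cohomology-killing assertion, and then to supply that assertion from Theorem \ref{vancycl} when $\chara k=0$ and from Theorem \ref{main} when $\chara k=p>0$. Suppose first $\chara k=0$. Then $(X,\Mm_X)$, being log smooth and saturated over the log regular base $(S,\Mm_S)$, is a regular log scheme over $\QQ$, so Theorem \ref{vancycl} applies to it and to every étale $U$ over $X$. Its conclusion is precisely condition (b) of Proposition \ref{someprop} for the inclusion $X^\circ\hookrightarrow X$, so we obtain condition (c): $X_{(\bar x)}\times_X X^\circ$ is a $K(\pi,1)$ for every geometric point $\bar x$. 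Since $X^\circ\subseteq X_\eta$ we have $X_{(\bar x)}\times_X X^\circ_{\bar\eta}=(X_{(\bar x)}\times_X X^\circ)\times_\eta\bar\eta$, and Proposition \ref{kpi1prop}(c), together with a passage to the limit over étale neighborhoods, upgrades this to the assertion over $\bar\eta$.

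\textbf{The mixed-characteristic case: reduction and the auxiliary cover.} For $\chara k=p>0$ I would apply Proposition \ref{someprop} to $X^\circ_{\bar\eta}\to X$, reducing the theorem to the local statement: after shrinking an étale neighborhood $U$ of $\bar x$, every class $\zeta\in H^i(U^\circ_{\bar\eta},\Ff)$ ($i>0$, $\Ff$ locally constant constructible, $U^\circ_{\bar\eta}:=U\times_X X^\circ_{\bar\eta}$) is killed by a finite étale cover of $U^\circ_{\bar\eta}$. As in \ref{secondred} I would first reduce to $\Ff=\ZZ/n\ZZ$ using the logarithmic Abhyankar lemma. The key device is to apply Theorem \ref{main} not to $U$ but to the Kummer cover $g:(\tilde U,\Mm_{\tilde U})\to(U,\Mm_U)$ obtained by adjoining $n$-th roots of the $b$ horizontal boundary functions (a basis of the free monoid $\overline\Mm_{X/S}$ at $\bar x$, cf. \ref{our2}). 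Because these generators are free, $\tilde U$ is again of the form studied in \ref{our-setting}--\ref{our2}, hence log smooth over $(S,\Mm_S)$ with $\tilde U_\eta$ smooth over $\eta$; on the characteristic-zero generic fibre $g$ restricts to a finite étale map $\tilde U^\circ_{\bar\eta}\to U^\circ_{\bar\eta}$, and by construction the induced map $\bar\Mm^{gp}_{U_{\bar\eta}}\otimes\ZZ/n\to\bar\Mm^{gp}_{\tilde U_{\bar\eta}}\otimes\ZZ/n$ is zero (it is multiplication by $n$).

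\textbf{The purity computation and conclusion.} On $U_{\bar\eta}$ the log structure is the strict normal crossings horizontal boundary, so $(U_{\bar\eta},\Mm)$ is a regular log scheme over $\bar K$ and log absolute cohomological purity \eqref{puriso} gives $R^q j_*(\ZZ/n)\isom\bigwedge^q\bar\Mm^{gp}_{U_{\bar\eta}}\otimes\ZZ/n(-q)$, and likewise on $\tilde U_{\bar\eta}$. Since the map on $\bar\Mm^{gp}\otimes\ZZ/n$ vanishes, the comparison map $g^*R^q j_*(\ZZ/n)\to R^q j'_*(\ZZ/n)$ is $\bigwedge^q$ of zero, hence zero for every $q\geq 1$; therefore $g^*$ annihilates the whole region $q\geq 1$ of the Leray spectral sequence of $j:U^\circ_{\bar\eta}\hookrightarrow U_{\bar\eta}$. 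Consequently $g^*\zeta$ lies in the bottom step of the Leray filtration, i.e. it is the restriction of a class $\tilde\zeta\in H^i(\tilde U_{\bar\eta},\ZZ/n)$ on the entire geometric generic fibre. Finally, Theorem \ref{main} applied to $\tilde U$ shows (after shrinking) that $\tilde U_{\bar\eta}$ is a $K(\pi,1)$, so by Proposition \ref{kpi1prop}(a) the class $\tilde\zeta$ dies on a finite étale surjective cover $W\to\tilde U_{\bar\eta}$; restricting $W$ to the trivial locus and composing with $g$ produces a finite étale cover of $U^\circ_{\bar\eta}$ killing $\zeta$, as required.

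\textbf{The main obstacle.} The essential difficulty, and the reason the argument must pass through $\tilde U$ rather than through $U$ directly, is that in mixed characteristic the total Milnor fibre $U_{\bar\eta}$ is cohomologically nontrivial: unlike the strictly local situation underlying Theorem \ref{vancycl}, killing the boundary monodromy by a Kummer cover does not by itself annihilate $\zeta$, and one is left with the residual class $\tilde\zeta$ coming from the whole fibre. What resolves this is the observation that the Kummer cover $\tilde U$ is again log smooth over $S$ with smooth generic fibre, so Theorem \ref{main} is available for it and controls exactly this residual class. The two points demanding care are therefore (i) checking that adjoining $n$-th roots along the free horizontal boundary preserves log smoothness over $S$ and smoothness of the generic fibre, so that Theorem \ref{main} applies to $\tilde U$ even when $p\mid n$; and (ii) verifying, via purity, that $g^*$ kills all the higher rows of the Leray spectral sequence and not merely its top edge, so that $\zeta$ indeed descends to a class on the total fibre.
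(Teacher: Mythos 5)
Your equicharacteristic-zero argument coincides with the paper's, and in mixed characteristic your skeleton (reduce via Proposition \ref{someprop} to killing classes \'etale-locally, pass to a Kummer cover along the horizontal boundary, use purity \eqref{puriso}, and invoke Theorem \ref{main}) is the same circle of ideas as the paper's proof, which follows Faltings' Lemma 2.3. But there is a genuine gap at the decisive step. From the vanishing of the base-change maps $g^* R^b j_*(\ZZ/n) \to R^b j'_*(\ZZ/n)$ for all $b\geq 1$ you conclude that ``$g^*\zeta$ lies in the bottom step of the Leray filtration.'' This does not follow. A morphism of spectral sequences that vanishes on $E_2^{a,b}$ for $b\geq 1$ vanishes on the graded pieces $\mathrm{gr}^b$ of the abutment for $b\geq 1$, and this only means that $g^*$ maps $F^b$ into $F'^{b-1}$: the filtration level drops by \emph{one}, not to zero. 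If $b(\zeta)\geq 2$ (possible as soon as $i\geq 2$), then $g^*\zeta \in F'^{b(\zeta)-1}$, and nothing forces $g^*\zeta\in F'^{0}$; the vanishing of $\mathrm{gr}^{b}\to\mathrm{gr}'^{b}$ for $b<b(\zeta)$ gives no constraint on $g^*\zeta$, since $\zeta$ itself does not lie in $F^{b}$. Consequently your single cover does not produce the class $\tilde\zeta$ on the total fibre, and the concluding application of Theorem \ref{main} has nothing to apply to.

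The repair is precisely the paper's induction on the level $b(\zeta)$: each admissible ramified cover lowers the level by at least one, so one composes $b(\zeta)$ such covers, checking at every stage that the new cover is again of the form $(X'_{(\bar x')})_{\bar\eta}$ for some log smooth $X'/S'$ with smooth generic fibre --- the paper arranges this by pulling back a chart $X\to \AA_P$ along $\AA_{\cdot n}:\AA_P\to\AA_P$ and base-changing to $S'=\Spec V[\pi']/(\pi'^n-\pi)$ --- and only at level $b=0$ invokes the $K(\pi,1)$ property of the total fibre (Theorem \ref{main} plus Corollary \ref{AtoB}). Two further points you elide: the paper does not reduce to constant coefficients, only to coefficients extending to a locally constant constructible sheaf on the total fibre (tame Abhyankar on the characteristic-zero fibre), and checks the divisibility of \eqref{divbyn} after a trivializing Galois cover; and the final bookkeeping requires the localization argument of \ref{secondred} (via \stacks{02LK}), because the shrinking of $\tilde U$ demanded by Theorem \ref{main} happens on the cover, not on $U$, so ``restricting $W$ to the trivial locus and composing with $g$'' is not literally a finite \'etale cover of $U^\circ_{\bar\eta}$ for an \'etale neighborhood of $\bar x$.
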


\begin{proof}
We should first note that $X_{(\bar x)}\times_X X^\circ_{\bar\eta}$ is connected. Let $\bar S = \Spec \bar V$, where $\bar V$ is the integral closure of $V$ in $\bar K$. Then $(X_{(\bar x)})_{\bar S}$ is strictly local \cite[3.7]{Higgs2} and normal \cite[6.3(iii)]{Higgs1}, hence $X_{(\bar x)}\times_X X^\circ_{\bar\eta}$ is connected (being an open subset of the former).

In case $\chara k = 0$, as $(X, \Mm_X)$ is regular by \cite[Theorem 8.2]{KatoToric}, Theorem \ref{vancycl} implies condition (b) of Proposition \ref{someprop}, hence $X_{(\bar x)}\times_X X^\circ_{\eta}$ is a $K(\pi, 1)$ (note that $X^\circ\subseteq X_\eta$). As $X_{(\bar x)}\times_X X^\circ_{\bar\eta}$ is a limit of finite \'etale covers of $X_{(\bar x)}\times_X X^\circ_{\eta}$, it is a $K(\pi, 1)$ as well.

We will now assume that $\chara k = p > 0$ and follow \cite[Lemma 2.3]{Faltings} (see also \cite[4.9--4.5]{Olsson}). By Theorem \ref{main} and Corollary \ref{AtoB}, we know that $Z:=(X_{(\bar x)})_{\bar\eta}$ is a $K(\pi, 1)$. Since $X_\eta$ is smooth, $Z$ is regular and $Z^\circ = X_{(\bar x)}\times_X X^\circ_{\bar\eta}$ is obtained from $Z$ by removing divisor with strictly normal crossings $D = D_1\cup\ldots\cup D_r$. Let $\Ff$ be a locally constant constructible abelian sheaf on $Z^\circ$, and pick a $\zeta\in H^i(Z^\circ, \eta)$ ($i>0$). We want to construct a finite \'etale cover of $Z^\circ$ killing $\zeta$. 

By Abhyankar's lemma \cite[Exp. XIII, Appendice I, Proposition 5.2]{SGA1}, there is an integer $n$ such that if $f:Z'\to Z$ is a finite cover with ramification indices along the $D_i$ nonzero and divisible by $n$, then $f^{\circ *} \Ff$ extends to a locally constant constructible sheaf on $Z'$. I claim that we can choose $Z'$ which is a $K(\pi, 1)$. By the previous considerations, it suffices to find $Z'$ equal to $(X'_{(\bar x')})_{\bar\eta}$ for some $X'/S'$ satisfying the same assumptions as $X$. We can achieve this by choosing a chart $X\to \AA_{P}$ around $\bar x$ as before and taking a fiber product as in \eqref{cartesianchart} (and $S' = \Spec V[\pi']/(\pi'^{n} - \pi)$). 

Now that we can assume that $\Ff = j^* \Ff'$ where $ \Ff'$ is locally constant constructible on $Z$ and $j:Z^\circ \hookrightarrow Z$ is the inclusion, we choose a finite \'etale cover $g:Y\to Z$, Galois with group $G$, for which $g^* \Ff'$ is constant. 

Let $f:Z'\to Z$ be a finite cover with ramification indices along the $D_i$ nonzero and divisible by some integer $n$. I claim that for any $b\geq 0$, the base change map
\begin{equation}\label{divbyn}
	f^* R^b j_* \Ff \to R^b j'_{*} (f^{\circ *} \Ff) 
\end{equation}
is divisible by $n^b$. In case $\Ff$ is constant, this follows once again from logarithmic absolute cohomological purity \eqref{puriso}, and in general can be checked \'etale locally, e.g. after pulling back to $Y$, where $\Ff$ becomes constant. 
Consider the Leray spectral sequence for $j$:
\[ E^{a,b}_2 = H^a(Z, R^b j_* \Ff) \quad\Rightarrow\quad H^{a+b}(Z^\circ, \Ff),  \]
inducing an increasing filtration $F^b$ on $H^i(Z^\circ, \Ff)$. Let $b(\zeta)$ be the smallest $b\geq 0$ for which $\zeta\in F^b$. We prove the assertion by induction on $b(\zeta)$. If $b(\zeta) = 0$, then $\zeta$ is in the image of a $\zeta' \in H^i(Z, j_*\Ff)$, and since $j_* \Ff$ is locally constant and $Z$ is a $K(\pi, 1)$, we can kill $\zeta'$ by a finite \'etale cover of $Z$. For the induction step, let $n$ be an integer annihilating $\Ff$, and pick a ramified cover $f:Z'\to Z$ as in the previous paragraph, such that again $Z' = (X'_{(\bar x')})_{\bar\eta}$ for some $X'/S'$ satisfying the assumptions of the theorem. Note that since \eqref{divbyn} is divisible by $n$, it induces the zero map on $E^{a, b}_2$ for $b>0$, hence $b(f^*\zeta)<b(\zeta)$ and we conclude by induction.
\end{proof}

\begin{corollary} \label{comparisoncor}
Let $(X, \Mm_X)$ be as in Theorem \ref{comparison}, and let $X^\circ = (X, \Mm_X)_{tr}$. Consider the Faltings' topos $\tilde E$ of $X^\circ_{\bar\eta}\to X$ and the morphism of topoi
\[ \Psi : X^\circ_{\bar\eta,\et} \to \tilde E. \]
Let $\Ff$ be a locally constant constructible abelian sheaf on $X^\circ_{\bar\eta}$. Then $R^i\Psi_* \Ff = 0$ for $i>0$, and the natural maps \eqref{defmu}
\[ \mu: H^i(\tilde E, \Psi_*(\Ff)) \to H^i(X^\circ_{\bar\eta,\et}, \Ff) \]
are isomorphisms.
\end{corollary}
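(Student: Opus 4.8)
The plan is to obtain both assertions as formal consequences of Theorem \ref{comparison}, using Proposition \ref{someprop} for the vanishing and a degeneration of the Leray spectral sequence for the isomorphisms.

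First I would apply Proposition \ref{someprop} to the morphism $X^\circ_{\bar\eta}\to X$. Its standing hypothesis---that $X_{(\bar x)}\times_X X^\circ_{\bar\eta}$ has finitely many connected components for every geometric point $\bar x$ of $X$---is verified at the outset of the proof of Theorem \ref{comparison}, where this fiber is shown to be connected. Theorem \ref{comparison} asserts that each $X_{(\bar x)}\times_X X^\circ_{\bar\eta}$ is a $K(\pi, 1)$, i.e. condition (c) of Proposition \ref{someprop} holds. By the equivalence (a)~$\Leftrightarrow$~(c) of that proposition, condition (a) holds as well; taking the \'etale $U$ over $X$ to be $X$ itself yields $R^i\Psi_*\Ff = 0$ for all $i>0$, which is the first assertion.

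It remains to deduce that the maps $\mu$ are isomorphisms. For this I would invoke the Leray spectral sequence for $\Psi$,
\[ E_2^{p,q} = H^p(\tilde E, R^q\Psi_*\Ff) \Rightarrow H^{p+q}(X^\circ_{\bar\eta,\et}, \Ff). \]
The vanishing just established gives $E_2^{p,q} = 0$ for $q>0$, so the spectral sequence degenerates at $E_2$ and the edge homomorphisms $H^p(\tilde E, \Psi_*\Ff)\to H^p(X^\circ_{\bar\eta,\et}, \Ff)$ are isomorphisms for all $p$. By the remark following the definition \eqref{defmu} in \ref{comm1}, these edge homomorphisms coincide with the maps $\mu$, which finishes the proof.

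Since every ingredient is already in place, I expect no genuine obstacle: the corollary is essentially the implication $(C)\Rightarrow(D)$ of \ref{setup} applied to the situation furnished by Theorem \ref{comparison}. The only points demanding attention are checking that Proposition \ref{someprop} is applicable---handled by the connectedness statement imported from the proof of Theorem \ref{comparison}---and the identification of $\mu$ with the spectral-sequence edge map, which is exactly what \ref{comm1} was set up to provide.
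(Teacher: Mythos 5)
Your proof is correct and is precisely the argument the paper intends: the corollary is left without an explicit proof because it is exactly the chain $(B)\Rightarrow(C)\Rightarrow(D)$ from \ref{setup}, implemented via Proposition \ref{someprop} (with connectedness of $X_{(\bar x)}\times_X X^\circ_{\bar\eta}$ supplied by the proof of Theorem \ref{comparison}) and the degeneration of the Leray spectral sequence, with \ref{comm1} identifying $\mu$ with the edge map. Nothing is missing.
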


\bibliographystyle{plain} 
\bibliography{kpi1}

\end{document}